 \newtheorem{theorem}{Theorem}[section]
\newtheorem{lemma}[theorem]{Lemma}
\newtheorem{corollary}[theorem]{Corollary}
\newtheorem{proposition}[theorem]{Proposition}
 \theoremstyle{definition}
 \newtheorem{definition}[theorem]{Definition}
 \newtheorem{remark}[theorem]{Remark}
\numberwithin{equation}{section}
\newcommand {\N}{\mathbb{N}} 
\newcommand {\Z}{\mathbb{Z}} 
\newcommand{\BB}{\mathcal{B}}
\DeclareMathOperator{\spa}{span}
\DeclareMathOperator{\Per}{Per}
   \DeclareMathOperator{\GL}{GL}
\DeclareMathOperator{\LCA}{LCA}
\DeclareMathOperator{\Ker}{Ker}
\DeclareMathOperator{\End}{End}
\DeclareMathOperator{\Id}{Id}
\DeclareMathOperator{\Mat}{Mat}
\DeclareMathOperator{\NW}{NW}
\DeclareMathOperator{\Rec}{R}
\DeclareMathOperator{\Crec}{CR}
\begin{document}
\title{On linear shifts of finite type and their endomorphisms}  
\author[T. Ceccherini-Silberstein]{Tullio Ceccherini-Silberstein}
\address{Dipartimento di Ingegneria, Universit\`a del Sannio, C.so
Garibaldi 107, 82100 Benevento, Italy}
\email{tullio.cs@sbai.uniroma1.it}
\author[M. Coornaert]{Michel Coornaert}
\address{Universit\'e de Strasbourg, CNRS, IRMA UMR 7501, F-67000 Strasbourg, France}
\email{michel.coornaert@math.unistra.fr}
\author[X.K. Phung]{Xuan Kien Phung}
\address{D\'epartement de math\'ematiques, Universit\'e du Qu\'ebec \`a Montr\'eal, 
Case postale 8888, succursale centre-ville, Montr\'eal (Qu\'ebec) H3C 3P8, Canada}
\email{phungxuankien1@gmail.com}
\subjclass[2010]{37B15, 37B20, 37B51, 20F65, 68Q80}
\keywords{Linear subshift, linear cellular automaton, subshift of finite type, sofic-linear subshift, space-time inverse system, 
polycyclic group, group of linear Markov type, Noetherian group algebra, nilpotency, limit set}
\begin{abstract}
Let $G$ be a group and let $A$ be a finite-dimensional vector space over an arbitrary field $K$. 
We study finiteness properties of linear subshifts $\Sigma \subset A^G$ and the dynamical behavior of
linear cellular automata $\tau \colon \Sigma \to \Sigma$.
We say that $G$ is of $K$-linear Markov type if, for every finite-dimensional vector space $A$ over $K$,
all linear subshifts $\Sigma \subset A^G$ are of finite type. 
We show that $G$ is of $K$-linear Markov type if and only if the group algebra $K[G]$ is one-sided Noetherian.
We prove that a linear cellular automaton $\tau$ is nilpotent if and only if its limit set, i.e., 
the intersection of the images of its iterates, reduces to the zero configuration.  
If $G$ is infinite, finitely generated, and $\Sigma$ is topologically mixing,
we show that $\tau$ is nilpotent if and only if its limit set is finite-dimensional. 
A new characterization of the limit set of $\tau$ in terms of pre-injectivity is also obtained. 
\end{abstract}
\date{\today}
\maketitle

\setcounter{tocdepth}{1}
\tableofcontents
\section{Introduction} 
Let $G$ be a group and let $A$ be a set, called the \emph{alphabet}.
The set $A^G  \coloneqq   \{x  \colon G \to A\}$, consisting of all maps from $G$ to $A$,  
is called the set of \emph{configurations} over the group $G$ and the alphabet $A$.
We equip $A^G = \prod_{g \in G} A$ with its \emph{prodiscrete uniform  structure}, i.e., 
the product uniform structure obtained by taking the discrete uniform structure on each factor $A$ of $A^G$.
Thus, two configurations are ``close'' if they coincide on a ``large'' subset of $G$.
Note that $A^G$ is a totally disconnected Hausdorff space and that $A^G$ is compact if and only if $A$ is finite.
The \emph{shift action} of the group $G$ on $A^G$ 
is the action defined by $(g,x) \mapsto g x$, where $gx(h) \coloneqq x(g^{-1}h)$ for all 
$g,h \in G$ and $x \in A^G$.
This action is uniformly continuous with respect to the prodiscrete uniform structure. 
\par
A closed $G$-invariant subset $\Sigma \subset A^G$ is called a \emph{subshift} of $A^G$.
\par
Given subsets $D \subset G$ and $P \subset A^D$, the set 
\begin{equation}
\label{e:sft} 
\Sigma(D,P) = \Sigma(A^G;D,P) \coloneqq \{x \in A^G: (g^{-1}  x)\vert_{D} \in P \text{ for all } g \in G\}
\end{equation}
is a $G$-invariant subset of $A^G$ (here $(g^{-1}x)\vert_D \in A^D$ denotes the restriction of the configuration $g^{-1}x$ to $D$).
When $D$ is finite, $\Sigma(D,P)$ is also closed in $A^G$, and therefore is a subshift. 
One then says that $\Sigma(D,P)$ is the \emph{subshift of finite type}, briefly \emph{SFT}, 
associated with $(D,P)$ and that $D$ (resp.\ $P$) is a \emph{defining memory set} 
(resp.\ a \emph{defining set of admissible patterns}) for $\Sigma$. 
Note that a defining set of admissible patterns for an SFT is not necessarily finite.
\par
Let $B$ be another alphabet set. 
A map $\tau \colon B^G  \to A^G$ is called a \emph{cellular automaton}, briefly a \emph{CA}, 
if there exist a finite subset $M \subset G$ and a map $\mu \colon B^M \to A$ such that 
\begin{equation} 
\label{e;local-property}
\tau(x)(g) = \mu((g^{-1}x)\vert_M)  \quad  \text{for all } x \in B^G \text{ and } g \in G.
\end{equation}
Such a set $M$ is then called a \emph{memory set} and $\mu$ is called a \emph{local defining map} for $\tau$.
It is immediate from the above definition that every CA $\tau \colon B^G \to A^G$ is uniformly continuous and $G$-equivariant 
(cf.~\cite[Theorem 1.1]{cc-TCS1}, see also \cite[Theorem 1.9.1]{book}). 
\par 
More generally, if $\Sigma_1 \subset B^G$ and $\Sigma_2 \subset A^G$ are subshifts, 
a map $\tau \colon \Sigma_1 \to \Sigma_2$ is called a \emph{CA} if it can be extended to a CA $\tilde{\tau} \colon B^G \to A^G$. 
\par 
Suppose now that $A$ and $B$ are vector spaces over a field $K$. 
Then $A^G$ and $B^G$ inherit a natural $K$-vector space structure. 
A subshift $\Sigma \subset A^G$ which is also a vector subspace of $A^G$ is called a \emph{linear subshift}. 
A $K$-linear CA $\tau \colon B^G \to A^G$ is called a \emph{linear CA}. 
Note that a CA $\tau \colon B^G \to A^G$ with memory set $M \subset G$ is linear if and only if the associated
local defining map $\mu \colon B^M \to A$ is $K$-linear (see \cite[Section 8.1]{book}).
\par
More generally, given linear subshifts $\Sigma_1 \subset B^G$ and $\Sigma_2 \subset A^G$, 
a map $\tau \colon \Sigma_1 \to \Sigma_2$ is called a \emph{linear CA}
if it is the restriction of some linear CA $\tilde{\tau} \colon B^G \to A^G$. 
\par
A linear subshift $\Sigma \subset A^G$ is called a \emph{linear-sofic subshift} provided there exists a vector space $B$, 
a linear SFT $\Sigma' \subset B^G$, and a linear CA $\tau\colon B^G \to A^G$ 
such that $\tau(\Sigma') = \Sigma$. 
\par
In our recent papers \cite{ccp-2020, phung-2020} we introduced the notion of an algebraic sofic subshift $\Sigma \subset A^G$, where $A$
is the set of $K$-points of an algebraic variety over an algebraically closed field $K$, and studied cellular 
automata $\tau \colon \Sigma \to \Sigma$ whose local defining maps are induced by algebraic morphisms. 
When referring to these notions, we shall refer to the ``algebraic setting''.
When the field $K$ is algebraically closed, linear-sofic subshifts and linear cellular automata are algebraic sofic subshifts
and algebraic cellular automata, respectively. 
Therefore, several results in \cite{ccp-2020, phung-2020} hold true in the present setting, even when the field $K$ is not algebraically closed, by a direct adaptation of the proofs given therein.
However the proofs in the algebraic setting are much more technical and involved, 
and one of the purposes of this paper is to present simpler and more direct proofs of these results in the
linear setting. We also obtain several new results and consequences as indicated below. 
\par
Our first result is a linear version of the well known characterization of SFT with finite alphabets by the descending chain condition 
(see \cite[Theorem 10.1]{ccp-2020} and \cite[Proposition~6.1]{phung-2020} for a similar result in the algebraic setting, and 
\cite[Proposition~9.17]{phung-2020} for the more general \emph{admissible group shifts}, \cite[Definition~9.11]{phung-2020}).

\begin{theorem}
\label{t:LSFT-DCC}
Let $G$ be a countable group and let $A$ be a finite-dimensional vector space over a field $K$. 
Let $\Sigma \subset A^G$ be a linear subshift. Then the following conditions are equivalent:
\begin{enumerate}[{\rm (a)}]
\item $\Sigma$ is a SFT;
\item every decreasing sequence of linear subshifts of $A^G$
\[
\Sigma_0 \supset \Sigma_1 \supset \cdots \supset \Sigma_n \supset \Sigma_{n+1} \supset \cdots
\]
such that $\Sigma = \bigcap_{n \in \N} \Sigma_n$, eventually stabilizes (that is, there exists
$n_0 \in \N$ such that $\Sigma_{n_0} = \Sigma_n$ for all $n \geq n_0$).
\end{enumerate}
\end{theorem}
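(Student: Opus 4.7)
The plan is to treat the two implications separately, using that $G$ is countable (so there is an exhaustion $F_1 \subset F_2 \subset \cdots$ of $G$ by finite sets with $\bigcup_k F_k = G$) and that $A$ is finite-dimensional (so every $A^F$ is finite-dimensional for $F \subset G$ finite). For (b) $\Rightarrow$ (a), I would first verify the tautological formula $\Sigma = \bigcap_k \Sigma(F_k, \Sigma\vert_{F_k})$: the inclusion $\subset$ is immediate from shift-invariance, and the reverse uses closedness of $\Sigma$ in the prodiscrete topology, since any $x$ with $x\vert_{F_k} \in \Sigma\vert_{F_k}$ for every $k$ is a prodiscrete limit of elements of $\Sigma$. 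As $\Sigma$ is linear, each $\Sigma\vert_{F_k}$ is a subspace of $A^{F_k}$, so every $\Sigma(F_k, \Sigma\vert_{F_k})$ is a linear SFT, and the sequence is decreasing in $k$. Applying (b), it must stabilize, and then $\Sigma$ itself coincides with some $\Sigma(F_{k_0}, \Sigma\vert_{F_{k_0}})$ and is therefore an SFT.

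For (a) $\Rightarrow$ (b), I would write $\Sigma = \Sigma(D, P)$ with $D \subset G$ finite and $P = \Sigma\vert_D \subset A^D$ a linear subspace (any defining set of patterns can be replaced by $\Sigma\vert_D$ without changing $\Sigma$). Given a decreasing sequence $(\Sigma_n)$ of linear subshifts with $\bigcap_n \Sigma_n = \Sigma$, I would observe that for every finite $F \subset G$ the decreasing sequence of subspaces $\Sigma_n\vert_F$ of the finite-dimensional space $A^F$ stabilizes at some minimal index $n_F$, with stable value $V_F := \bigcap_n \Sigma_n\vert_F$; moreover $F \subset F'$ implies $n_F \leq n_{F'}$ since restriction is compatible with stabilization. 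The crux is the equality $V_F = \Sigma\vert_F$; the inclusion $\Sigma\vert_F \subset V_F$ is trivial, and for the reverse I would use a Mittag-Leffler lift. For $k \leq l$ the restriction $V_{F_l} \to V_{F_k}$ is surjective, because restricting $V_{F_l} = \Sigma_{n_{F_l}}\vert_{F_l}$ to $F_k$ gives $\Sigma_{n_{F_l}}\vert_{F_k}$, which equals $V_{F_k}$ thanks to $n_{F_l} \geq n_{F_k}$. Given $p \in V_F$ (arranging $F \subset F_1$), I would lift $p$ inductively through this inverse system to a configuration $x \in A^G$ with $x\vert_{F_k} \in V_{F_k}$ for every $k$ and $x\vert_F = p$. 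This $x$ lies in every $\Sigma_n$: for any fixed $n$ and any $k$ with $n_{F_k} \geq n$ one has $V_{F_k} = \Sigma_{n_{F_k}}\vert_{F_k} \subset \Sigma_n\vert_{F_k}$, so $x\vert_{F_k} \in \Sigma_n\vert_{F_k}$ for every $k$, and closedness of $\Sigma_n$ gives $x \in \Sigma_n$. Hence $x \in \Sigma$ and $p \in \Sigma\vert_F$.

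Applying this equality with $F = D$ yields $V_D = \Sigma\vert_D = P$, so $\Sigma_n\vert_D = P$ for all $n \geq n_D$. Shift-invariance of $\Sigma_n$ then gives $\Sigma_n \subset \Sigma(D, \Sigma_n\vert_D) = \Sigma(D, P) = \Sigma$, and combined with $\Sigma \subset \Sigma_n$ we conclude $\Sigma_n = \Sigma$ for all $n \geq n_D$. The main obstacle is precisely the equality $V_F = \Sigma\vert_F$: because $A^G$ need not be compact when the field $K$ is infinite, one cannot directly extract an accumulation point from witnesses $x_n \in \Sigma_n$ with $x_n\vert_F = p$, so the classical compactness argument available for finite-alphabet SFTs breaks down here. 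Replacing it by the Mittag-Leffler construction through the finite-dimensional inverse system $(V_{F_k})$ --- where finite-dimensionality of $A$ forces stabilization and linearity makes the transition maps surjective --- is the key maneuver that adapts the argument to the linear setting over an arbitrary field.
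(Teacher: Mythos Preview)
Your proof is correct and follows essentially the same strategy as the paper. For (b)$\Rightarrow$(a) both you and the paper use the canonical approximation $\Sigma = \bigcap_k \Sigma(F_k,\Sigma\vert_{F_k})$ by linear SFTs; for (a)$\Rightarrow$(b) both arguments hinge on the same Mittag-Leffler lift, the only difference being packaging: the paper sets up a two-parameter inverse system $X_{ij}=(\Sigma_j)\vert_{M_i}$ and invokes its Lemma~3.1 on nonempty affine inverse systems, whereas you first stabilise in the $n$-direction to obtain $V_{F_k}$ and then observe directly that the transition maps $V_{F_l}\to V_{F_k}$ are surjective, which lets you lift by hand. One cosmetic point: in your sentence ``for any fixed $n$ and any $k$ with $n_{F_k}\geq n$ \dots\ so $x\vert_{F_k}\in\Sigma_n\vert_{F_k}$ for every $k$'', the conclusion for \emph{every} $k$ follows more simply from $V_{F_k}=\bigcap_m \Sigma_m\vert_{F_k}\subset \Sigma_n\vert_{F_k}$, with no condition on $n_{F_k}$ needed.
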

 
\begin{corollary}
\label{c:DCC-FT}
Let $G$ be a countable group and let $A$ be a finite-dimensional vector space over a field $K$.
Then the following conditions are equivalent:
\begin{enumerate}[{\rm (a)}]
\item every linear subshift $\Sigma \subset A^G$ is an SFT;
\item $A^G$ satisfies the descending chain condition for linear subshifts, that is, every decreasing sequence of linear subshifts of $A^G$
\[
\Sigma_0 \supset \Sigma_1 \supset \cdots \supset \Sigma_n \supset \Sigma_{n+1} \supset \cdots
\]
eventually stabilizes.
\end{enumerate}
\end{corollary}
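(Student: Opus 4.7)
The plan is to derive Corollary \ref{c:DCC-FT} directly from Theorem \ref{t:LSFT-DCC} by quantifying the theorem's equivalence over all linear subshifts of $A^G$. Both implications should reduce to a one-line application of Theorem \ref{t:LSFT-DCC} once one observes the elementary fact that an arbitrary intersection of linear subshifts is again a linear subshift (since it is simultaneously closed, $G$-invariant, and a $K$-linear subspace, all three properties being preserved under intersection).

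For the implication (a)$\Rightarrow$(b), I would take an arbitrary decreasing sequence of linear subshifts
\[
\Sigma_0 \supset \Sigma_1 \supset \cdots \supset \Sigma_n \supset \Sigma_{n+1} \supset \cdots
\]
and set $\Sigma \coloneqq \bigcap_{n \in \N} \Sigma_n$. By the observation above, $\Sigma$ is a linear subshift of $A^G$, and by hypothesis (a) it is therefore an SFT. Applying the implication (a)$\Rightarrow$(b) of Theorem \ref{t:LSFT-DCC} to $\Sigma$ and to this particular chain gives that the sequence eventually stabilizes, which is what (b) requires.

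For the converse (b)$\Rightarrow$(a), I would fix an arbitrary linear subshift $\Sigma \subset A^G$ and verify condition (b) of Theorem \ref{t:LSFT-DCC} for $\Sigma$: if $\Sigma_0 \supset \Sigma_1 \supset \cdots$ is a decreasing sequence of linear subshifts with $\bigcap_{n \in \N} \Sigma_n = \Sigma$, then the hypothesis (b) of the corollary applies unconditionally (we do not even need to use the intersection condition) and forces the chain to stabilize. The implication (b)$\Rightarrow$(a) of Theorem \ref{t:LSFT-DCC} then yields that $\Sigma$ is an SFT.

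There is no genuine obstacle here: the corollary is essentially a tautological rephrasing of the theorem, the only substantive point being that the class of linear subshifts is closed under arbitrary intersections, which is immediate from the defining properties. No new input from the theory of group algebras or from the local structure of cellular automata is required at this stage.
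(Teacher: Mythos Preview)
Your proposal is correct. For the implication (a)$\Rightarrow$(b) it coincides exactly with the paper's argument: form the intersection $\Sigma = \bigcap_n \Sigma_n$, note it is a linear subshift and hence an SFT by (a), and invoke Theorem~\ref{t:LSFT-DCC} to get stabilization.

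For (b)$\Rightarrow$(a) your route differs slightly from the paper's. You invoke the implication (b)$\Rightarrow$(a) of Theorem~\ref{t:LSFT-DCC} directly: since the global descending chain condition in particular applies to any chain with intersection $\Sigma$, condition (b) of the theorem holds for $\Sigma$, whence $\Sigma$ is an SFT. The paper instead gives a self-contained argument: it builds an explicit approximating chain $\Sigma_n \coloneqq \Sigma(D_n, \Sigma_{D_n})$ of linear SFTs along an exhaustion $(D_n)$ of $G$, checks that $\bigcap_n \Sigma_n = \Sigma$, and concludes from (b) that $\Sigma = \Sigma_{n_0}$ is an SFT. This essentially reproduces the construction already used inside the proof of Theorem~\ref{t:LSFT-DCC}, so your version is the more economical deduction once Theorem~\ref{t:LSFT-DCC} is in hand; the paper's version has the minor virtue of not relying on the harder direction of that theorem.
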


Given a field $K$, we say that a group $G$ is of \emph{$K$-linear Markov type} provided that the equivalent conditions in 
Corollary \ref{c:DCC-FT} hold for every finite-dimensional vector space $A$ over $K$. 

Let $G$ be a group and let $K$ be a field. Given $\alpha \in K[G]$ we define $\alpha^* \in K[G]$ by setting 
$\alpha^*(g) \coloneqq \alpha(g^{-1})$ for all $g \in G$. It is straightforward to check that
the map $\alpha \mapsto \alpha^*$ yields a $K$-algebra isomorphism of the group algebra $K[G]$ 
onto the opposite algebra $K[G]^{opp}$.  As a consequence, $K[G]$ is left-Noetherian
if and only if it is right-Noetherian and, if this is the case, we simply say that $K[G]$ is one-sided Noetherian. 
In the proofs, however, in order to use a working definition at hand, we shall always refer to left Noetherianity.

We have the following characterization of countable groups of $K$-linear Markov type.

\begin{theorem}
\label{t:noether}
Let $G$ be a countable group and let $K$ be a field.
Then the group algebra $K[G]$ is one-sided Noetherian if and only if $G$ is of $K$-linear Markov type.
\end{theorem}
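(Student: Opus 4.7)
The plan is to set up an annihilator duality that transforms the descending chain condition on linear subshifts of $A^G$ into the ascending chain condition on submodules of a finitely generated free $K[G]$-module, and then to invoke Corollary~\ref{c:DCC-FT}.

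The starting point is the identification of the continuous $K$-linear dual of $A^G$, where $A^G$ carries the prodiscrete topology and $K$ is discrete. Any continuous linear functional on $A^G$ factors through the projection onto some finite product $A^F$, so the continuous dual identifies canonically with $\bigoplus_{g \in G} A^* \cong A^* \otimes_K K[G]$, which is a free left $K[G]$-module of rank $\dim_K A$. Since $A$ is finite-dimensional, each projection $A^G \to A^F$ has finite-dimensional image, and a direct computation with these projections shows that the annihilator maps $\Sigma \mapsto \Sigma^\perp$ and $W \mapsto W^\perp$ yield an inclusion-reversing bijection between closed $K$-subspaces $\Sigma \subset A^G$ and arbitrary $K$-subspaces $W \subset A^* \otimes_K K[G]$. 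I would then verify that under this duality the shift action on $A^G$ corresponds to the natural $K[G]$-module structure on $A^* \otimes_K K[G]$; the involution $\alpha \mapsto \alpha^*$ defined above is precisely what mediates between the left action on $A^G$ and the left action on the dual. Consequently, the annihilator restricts to an inclusion-reversing bijection between linear subshifts $\Sigma \subset A^G$ and left $K[G]$-submodules of $K[G]^{\dim_K A}$.

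With this correspondence in hand, $A^G$ satisfies the descending chain condition on linear subshifts (for every finite-dimensional vector space $A$ over $K$) if and only if every finitely generated free left $K[G]$-module satisfies the ascending chain condition on submodules, and this is in turn equivalent to $K[G]$ being left-Noetherian. Combined with Corollary~\ref{c:DCC-FT} and the definition of $K$-linear Markov type, this yields the theorem. I expect the main technical point to be the careful verification that the annihilator correspondence exchanges shift-invariance on $A^G$ with the $K[G]$-module structure on the dual, with the right convention on $G$; once that is in place, the rest is formal module theory.
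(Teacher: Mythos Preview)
Your approach is correct and shares the same underlying idea as the paper's proof---annihilator duality between linear subshifts and module-theoretic objects over $K[G]$---but the execution differs in a way worth noting. The paper dualizes into $\LCA(G,A)\cong\Mat_d(K[G])$ and works with left ideals there: it proves only one half of the duality, namely $(\Sigma^\perp)^\perp=\Sigma$ (Lemma~\ref{l:LMT}), which suffices for the forward implication, and then handles the converse by a separate and more hands-on argument (Lemma~\ref{l:markov-hilbert}) showing directly that any left ideal $\Gamma\subset K[G]$ with $\Gamma^\perp$ of finite type must be finitely generated. Your route instead identifies the continuous dual of $A^G$ with the free module $K[G]^{\dim_K A}$ and establishes a \emph{two-sided} annihilator bijection between linear subshifts and $K[G]$-submodules; once that bijection is in place, both implications follow symmetrically from the equivalence of the descending chain condition on one side and the ascending chain condition on the other. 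This is conceptually cleaner and avoids the ad~hoc Lemma~\ref{l:markov-hilbert} (and the Morita passage through $\Mat_d(K[G])$).

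One caution: you describe the bijection as following from ``a direct computation with these projections'', but the harder half, $(W^\perp)^\perp=W$ for an arbitrary $K$-subspace $W$ of the continuous dual, is not entirely formal. Given $f\notin W$ with support $F$, one must produce $v\in W^\perp$ with $f(v)\neq 0$; this amounts to showing that the restriction of $W^\perp$ to $A^F$ equals the finite-dimensional annihilator $(W\cap (A^F)^*)^\perp$, which in turn requires an inverse-limit argument over the finite windows $E\supset F$ (equivalently, the linear-compactness of $A^G$, or an application of Lemma~\ref{l:inverse-limit-closed-im}). This is exactly the kind of Mittag-Leffler step that the paper packages into Lemma~\ref{l:markov-hilbert}, so you are not avoiding that work---you are relocating it into the proof of the duality. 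Make sure you spell this out.
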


\par
Recall that a group $G$ is said to be \emph{polycyclic} if it admits a subnormal series with cyclic factors, that is, 
a finite sequence $G  = G_0 \supset G_1 \supset \cdots  \supset G_n = \{1_G\}$ of subgroups such that $G_{i+1}$ is normal in $G_i$ and
$G_i/G_{i+1}$ is (possibly infinite) cyclic group, for $i=0,1,\ldots, n-1$.
More generally, $G$ is said to be \emph{polycyclic-by-finite} if it admits a polycyclic subgroup of finite index. 
\par
The following result is the linear version of a famous result by Klaus Schmidt \cite[Theorem 4.2]{schmidt-book}
(see also \cite{kitchens-schmidt}). 
The third-named author considered the notion of an \emph{admissible Artinian group structure} (cf.\ \cite[Definition~9.1]{phung-2020}):
this includes, for instance, a group structure of finite Morley rank, e.g.\ an algebraic group, an Artinian group, or an Artinian module.
Since a finite-dimensional vector space over a field $K$ is an Artinian $K$-module
and therefore it is naturally equipped with an admissible Artinian group structure (see \cite[Example 9.12]{phung-2020}),  
Corollary \ref{c:LMT} also constitutes a simpler case of the more general results 
\cite[Theorem~9.13]{phung-2020} or \cite[Theorem 1.11]{phung-2020}, where the alphabet set can be taken as an 
admissible Artinian group structure. 
Also note that the proof of \cite[Theorem 4.2]{schmidt-book} heavily relies on the fact that the alphabet set
therein is a \emph{compact} Lie group, so that the configuration space (equipped with the product topology) is itself compact.
We deduce Corollary~\ref{c:LMT} below from Theorem \ref{t:noether} and a result of Philip Hall 
(\cite{hall}, see also \cite[Corollary 2.8]{passman}) extending Hilbert's basis theorem on Noetherian rings.
An alternative self-contained proof of Corollary~\ref{c:LMT}, using only linear symbolic dynamics, 
is presented in Remark \ref{r:alternative}.

\begin{corollary}
\label{c:LMT} 
Let $K$ be a field. Then all polycyclic-by-finite groups (e.g., the free abelian groups $\Z^d$, $d \geq 1$) are of $K$-linear Markov type.
\end{corollary}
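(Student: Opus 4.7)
The plan is to derive Corollary \ref{c:LMT} directly from Theorem \ref{t:noether} combined with Philip Hall's theorem on the Noetherian property of group algebras of polycyclic-by-finite groups.

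First I would observe that any polycyclic-by-finite group $G$ is finitely generated (each cyclic quotient is finitely generated, and finite-index extensions preserve finite generation), hence countable. This puts us squarely in the hypotheses of Theorem \ref{t:noether}, so it suffices to prove that the group algebra $K[G]$ is one-sided Noetherian for every polycyclic-by-finite $G$ and every field $K$. Having made this reduction, the conclusion of the corollary would follow immediately.

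Next I would invoke (and briefly justify) Hall's theorem. The argument goes by induction on the length of a subnormal series $G = G_0 \supset G_1 \supset \cdots \supset G_n = \{1_G\}$ with cyclic factors, together with the extra finite quotient from the ``-by-finite'' part. The base case is trivial: if $G$ is trivial then $K[G] = K$ is obviously Noetherian. For the inductive step, suppose $H \triangleleft G$ with $K[H]$ left-Noetherian. If $G/H \cong \Z$, choose $t \in G$ mapping to a generator of $G/H$; then $K[G]$ is a skew Laurent polynomial ring $K[H][t,t^{-1};\sigma]$, where $\sigma$ is conjugation by $t$, and the Hilbert basis theorem in its skew form yields that $K[G]$ is left-Noetherian. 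If instead $G/H \cong \Z/m\Z$ is finite cyclic, then $K[G]$ is a finitely generated $K[H]$-module (generated by coset representatives), so it is left-Noetherian as a left $K[H]$-module and a fortiori as a left module over itself. The same finite-module argument handles the outermost ``by-finite'' extension.

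Combining these two steps, $K[G]$ is one-sided Noetherian for every polycyclic-by-finite group $G$, and by Theorem \ref{t:noether} such a group $G$ is of $K$-linear Markov type. The special case $G = \Z^d$ is an instance since free abelian groups of finite rank are polycyclic. The main conceptual obstacle is really packaged inside the inductive Hall argument, specifically the skew polynomial extension step; but as we may simply quote Hall \cite{hall} (see also \cite[Corollary 2.8]{passman}), the actual deduction of Corollary \ref{c:LMT} from Theorem \ref{t:noether} is a one-line application once the reduction to countability is noted.
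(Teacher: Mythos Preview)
Your proposal is correct and follows essentially the same approach as the paper: cite Hall's theorem that $K[G]$ is one-sided Noetherian for polycyclic-by-finite $G$, then apply Theorem~\ref{t:noether}. Your added observation that polycyclic-by-finite groups are countable (needed for the hypothesis of Theorem~\ref{t:noether}) and your sketch of Hall's inductive argument are more detail than the paper gives---it simply quotes Hall and moves on---but the logical route is identical.
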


The free group $F_2$ (and, more generally, any group which contains a subgroup which is not finitely generated) is not of $K$-linear Markov type (see Section \ref{ss:GLMT}). Groups of $K$-linear Markov type, or, more generally, \emph{monoids of $K$-linear Markov type}, satisfy  interesting topological properties. For example, it is shown in~\cite{phung-shadow} that the natural action of
every finitely generated abelian monoid of linear CA on any linear subshift satisfies the shadowing property.\\

Let now $f \colon X \to X$ be a selfmap of a set $X$.
\par
One has $X \supset f(X) \supset f^2(X) \supset \cdots \supset f^n(X) \supset f^{n+1}(X) \supset \cdots$ and the
set $\Omega(f) \coloneqq \bigcap_{n \geq 1} f^n(X) \subset X$ is called the \emph{limit set} of $f$. 
This is the set of points of $X$ that occur after iterating $f$ arbitrarily many times. 
The notion of a limit set was introduced in the framework of cellular automata by Wolfram \cite{wolfram} and was subsequently investigated
for instance in \cite{culik-limit-sets-1989}, \cite{guillon-richard-2008}, \cite{kari}, \cite{milnor}, and \cite{ccp-2020}.
\par 
Observe that $f(\Omega(f)) \subset \Omega(f)$. The inclusion may be strict (cf.~\cite[Proposition A.2.(iii)]{ccp-2020} and
Example (3) in Section \ref{s:examples-nilp-linear})
and equality holds if and only if every $x \in \Omega(f)$ admits a \emph{backward orbit}, 
i.e., a sequence $(x_i)_{i \geq 0}$ of points of $X$ such that 
$x_0 = x$ and $f(x_{i + 1}) = x_i$ for all $i \geq 0$. 
Clearly, $f$ is surjective if and only if $\Omega(f) = X$. 
Note also that $\Per(f) \coloneqq \bigcup_{n \geq 1} \{x\in X: f^n(x)=x\}$, the set of $f$-\emph{periodic points}, is
contained in $\Omega(f)$ and that $\Omega(f^n) = \Omega(f)$ for every $n \geq 1$. 
One says that the map $f$ is \emph{stable} if $f^{n+1}(X)=f^n(X)$ for some $n \geq 1$. 
\par
Assume that $X$ is a topological space and $f \colon X \to X$ is a continuous map. 
One says that $x \in X$ is a \emph{recurrent} (resp.~\emph{non-wandering}) point of $f$ if 
for every neighborhood $U$ of $x$, there exists $n \geq 1$ such that $f^n(x) \in U$ 
(resp.~$f^n(U)$ meets $U$).
Let $\Rec(f)$ (resp.~$\NW(f)$) denote the set of recurrent (resp.~non-wandering) points of $f$.
It is immediate that $\Per(f) \subset \Rec(f) \subset \NW(f)$ and that $\NW(f)$ is a closed subset of $X$.
\par
Suppose now that $X$ is a uniform space and let $f \colon X \to X$ be a uniformly continuous map.
One says that a point $x \in X$ is \emph{chain-recurrent} if for every entourage $E$ of $X$ there exist an integer $n \geq 1$ and
a sequence of points $x_0,x_1,\dots,x_n \in X$ such that $x = x_0 = x_n$ and $(f(x_i),x_{i + 1}) \in E$ for all $0 \leq i \leq n - 1$.
We shall denote by $\Crec(f)$ the set of chain-recurrent  points of $f$.
Observe that $\Crec(f)$ is always closed in $X$.
\par
We shall establish the following result (compare with~\cite[Theorem 1.3]{ccp-2020} in the algebraic setting).

\begin{theorem}
\label{t:limit-set}
Let $G$ be a finitely generated group and let $A$ be a finite-dimensional vector space over a field $K$. 
Let $\Sigma \subset A^G$ be a linear subshift and let $\tau \colon \Sigma \to \Sigma$ be a linear CA.  
Then the following hold:
\begin{enumerate}[\rm (i)]
\item
$\Omega(\tau)$ is a linear subshift of $A^G$;
\item
$\tau(\Omega(\tau)) = \Omega(\tau)$; 
\item
$\Per(\tau) \subset \Rec(\tau) \subset \NW(\tau) \subset \Crec(\tau) \subset \Omega(\tau)$;
\item 
if $\Omega(\tau)$ is of finite type then $\tau$ is stable; 
\item 
if $\Omega(\tau)$ is finite-dimensional then $\tau$ is stable.  
\end{enumerate}
\end{theorem}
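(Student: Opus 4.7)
The plan is to prove items (i)--(v) in order, with the recurring tool being that $A^G$ is \emph{linearly compact} (an inverse limit of finite-dimensional discrete vector spaces), so closed linear subspaces behave well under inverse-limit constructions. Theorem~\ref{t:LSFT-DCC} is used decisively in (iv), which in turn feeds into (v).

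For (i), each $\tau^n(\Sigma)$ is a $G$-invariant vector subspace, since $\tau$ is linear and $G$-equivariant, and is closed because $\tau$ is a continuous linear map between linearly compact spaces; hence $\Omega(\tau)$ is a decreasing intersection of linear subshifts, thus a linear subshift. For (ii), given $x \in \Omega(\tau)$ I construct a backward orbit: the sets $B_n(x) \coloneqq \{(y_1,\dots,y_n)\in\Sigma^n : \tau(y_1)=x,\ \tau(y_{i+1})=y_i\}$ are non-empty (since $x \in \tau^n(\Sigma)$) closed affine subspaces forming a projective system whose inverse limit is non-empty by linear compactness of $\Sigma$. The resulting backward orbit $(y_i)$ satisfies $y_i = \tau^k(y_{i+k}) \in \tau^k(\Sigma)$ for all $k$, so $y_i \in \Omega(\tau)$; in particular $x = \tau(y_1) \in \tau(\Omega(\tau))$.

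For (iii), the first three inclusions $\Per(\tau)\subset\Rec(\tau)\subset\NW(\tau)\subset\Crec(\tau)$ are standard for uniformly continuous selfmaps of a uniform space. For the crucial $\Crec(\tau)\subset\Omega(\tau)$, fix $x \in \Crec(\tau)$, $n \geq 1$, and a finite $F \subset G$; let $M$ be a memory set for $\tau$ with $1_G \in M$, so that $\tau^k(y)|_F$ depends only on $y|_{FM^k}$. Applying chain-recurrence at $x$ with the basic prodiscrete entourage $\{(y,z): y|_{FM^n}=z|_{FM^n}\}$ yields a chain $x_0=x, x_1, \ldots, x_m=x$ with $m\geq n$ and $\tau(x_i)|_{FM^n} = x_{i+1}|_{FM^n}$. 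A telescoping argument, exploiting $FM^{n-k}\subset FM^n$, shows $\tau^n(x_{m-n})|_F = \tau^{n-k}(x_{m-n+k})|_F$ for all $k$, hence $\tau^n(x_{m-n})|_F = x|_F$; since $F$ is arbitrary and $\tau^n(\Sigma)$ is closed by (i), $x \in \tau^n(\Sigma)$, and since $n$ is arbitrary, $x \in \Omega(\tau)$.

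For (iv), by (i) the chain $\Sigma \supset \tau(\Sigma) \supset \tau^2(\Sigma) \supset \cdots$ is a decreasing sequence of linear subshifts with intersection $\Omega(\tau)$, and Theorem~\ref{t:LSFT-DCC} (applicable since $G$ is countable) forces it to stabilize once $\Omega(\tau)$ is an SFT, which is stability of $\tau$. For (v), I reduce to (iv) via a lemma: every finite-dimensional linear subshift $\Omega \subset A^G$ (with $G$ finitely generated) is of finite type. To prove this, fix a finite symmetric generating set $S$ of $G$ and choose a finite $F \supset \{1_G\}\cup S$ large enough so that both $\Omega \to A^F$ and $\Omega \to A^{F \cap sF}$ are injective for each $s \in S$; then $\Omega = \Sigma(F,\Omega|_F)$ follows by inductively propagating the agreement $x|_F = y|_F$ (where $y \in \Omega$ is the unique lift of $x|_F$) to $x|_{gF}=y|_{gF}$ for all $g$, using $G$-invariance and the overlapping-window injectivity at each step. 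The main obstacle is exactly this propagation: the naïve choice of an injectivity window does not suffice, and one must enlarge $F$ (for instance by dilating it by a ball in $G$) so that the intersections $F \cap sF$ remain injective windows for $\Omega$.
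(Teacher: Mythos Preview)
Your proof is correct and follows essentially the same route as the paper: closedness of $\tau^n(\Sigma)$ via linear compactness/Mittag--Leffler for (i), backward orbits via an inverse-limit argument for (ii), Theorem~\ref{t:LSFT-DCC} for (iv), and the finite-dimensional $\Rightarrow$ SFT lemma for (v). The main difference is packaging: where the paper invokes its Theorem~\ref{t:closed-image}, Proposition~\ref{p:phi-surg} (via the space-time inverse system), Proposition~\ref{p:FD-SFT}, and the general results \cite[Propositions~2.2--2.3]{ccp-2020} for (iii), you give direct self-contained arguments in each case---in particular your telescoping $\varepsilon$-chain argument for $\Crec(\tau)\subset\Omega(\tau)$ is a concrete unwinding of the abstract statement the paper cites, and your injectivity-window propagation for (v) is a mild rephrasing of the paper's Proposition~\ref{p:FD-SFT}. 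One small point worth making explicit in (iii): chain-recurrence only guarantees a closed $E$-chain of \emph{some} length $m\geq 1$, and you obtain $m\geq n$ by concatenating the chain with itself.
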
 

In the above theorem, 
we may relax the condition on $G$ being finitely generated provided
we assume in addition that the linear subshift $\Sigma \subset A^G$ is linear-sofic. 
We thus have the following.

\begin{corollary}
\label{c:limit-set}
Let $G$ be a group and let $A$ be a finite-dimensional vector space over a field $K$. 
Let $\Sigma \subset A^G$ be a linear-sofic subshift (e.g., a linear SFT) 
and let $\tau \colon \Sigma \to \Sigma$ be a linear CA.  
Then properties (i) -- (v) in Theorem \ref{t:limit-set} hold.
\end{corollary}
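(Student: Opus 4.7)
The plan is to reduce to Theorem~\ref{t:limit-set} by descending from $G$ to a finitely generated subgroup $H \le G$ that carries all the finite data defining $\Sigma$ as linear-sofic and $\tau$ as a linear CA, and then transferring properties (i)--(v) via a coset-wise product decomposition. First, fix data realizing the linear-sofic hypothesis: a linear SFT $\Sigma' \subset B^G$ with finite defining memory set $D$ and admissible patterns $P \subset B^D$, a linear CA $\pi \colon B^G \to A^G$ with finite memory set $M_\pi$ and $\pi(\Sigma') = \Sigma$, and an extension $\tilde\tau \colon A^G \to A^G$ of $\tau$ with finite memory set $M_\tau$. Let $H := \langle D \cup M_\pi \cup M_\tau \rangle \le G$, which is finitely generated. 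The same local defining maps (which only read coordinates in $H$) induce an SFT $\Sigma'_H \subset B^H$ and linear CAs $\pi_H \colon B^H \to A^H$, $\tilde\tau_H \colon A^H \to A^H$; set $\Sigma_H := \pi_H(\Sigma'_H)$ and $\tau_H := \tilde\tau_H|_{\Sigma_H}$, to which Theorem~\ref{t:limit-set} applies.

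The key technical step is a coset-wise description. For $x \in A^G$ and $g \in G$, write $x^g \in A^H$ with $x^g(h) := x(gh)$, and fix a left transversal $T \subset G$ of $H$ containing $1_G$. Because $D, M_\pi, M_\tau \subset H$, the local rules of $\pi$ and $\tilde\tau$ act coset-wise, yielding $\pi(y)^g = \pi_H(y^g)$ and $\tilde\tau(x)^g = \tilde\tau_H(x^g)$ for every $g$, together with $y \in \Sigma'$ iff $y^t \in \Sigma'_H$ for every $t \in T$. A lift-then-project argument, reconstructing a $\Sigma'$-preimage of $x$ from independently chosen $\Sigma'_H$-preimages of the $x^t$, then gives
\[
\Sigma = \{x \in A^G : x^g \in \Sigma_H \text{ for all } g \in G\} \quad\text{and}\quad \Omega(\tau) = \{x \in A^G : x^g \in \Omega(\tau_H) \text{ for all } g \in G\},
\]
equivalent to the linear identifications $\Sigma \cong \prod_{t \in T} \Sigma_H$ and $\Omega(\tau) \cong \prod_{t \in T} \Omega(\tau_H)$, under which $\tau$ acts factor-wise as $\tau_H$. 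This reduction is the main technical obstacle: the inclusion ``$x^g \in \Sigma_H$ for all $g$ implies $x \in \Sigma$'' fails for arbitrary sofic subshifts, and its validity here rests precisely on the SFT $\Sigma'$ and the presenting CA $\pi$ both living over the common subgroup $H$, so that independently chosen $\Sigma'_H$-preimages glue into a global element of $\Sigma'$.

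Once the product decomposition is in place, each of (i)--(v) transfers from $\tau_H$ to $\tau$ routinely. Item (i) is immediate. For (ii), given $x \in \Omega(\tau)$, Theorem~\ref{t:limit-set}(ii) for $\tau_H$ provides preimages in $\Omega(\tau_H)$ of each $x^t$, which glue into a $\tau$-preimage of $x$ inside $\Omega(\tau)$. For (iii), only $\Crec(\tau) \subset \Omega(\tau)$ requires comment: a basic prodiscrete entourage of $A^G$ is supported on a finite subset of $G$, hence on finitely many cosets, so a chain-recurrence certificate for $x$ restricts to one for each $x^g$, and Theorem~\ref{t:limit-set}(iii) for $\tau_H$ places each $x^g \in \Omega(\tau_H)$. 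For (iv), if $\Omega(\tau) = \Sigma(\tilde D, \tilde P)$ is SFT with finite $\tilde D \subset G$, enlarge $H$ to $\langle H \cup \tilde D \rangle$ (still finitely generated); testing the SFT condition on the one-coset extension of any $z \in A^H$ identifies $\Omega(\tau_H)$ with the SFT $\Sigma(A^H; \tilde D, \tilde P)$, so Theorem~\ref{t:limit-set}(iv) for $\tau_H$ combined with the product decomposition gives stability of $\tau$. For (v), if $\dim_K \Omega(\tau) < \infty$, the product $\prod_{t \in T} \Omega(\tau_H)$ forces either $T$ finite (so $G$ itself is finitely generated and Theorem~\ref{t:limit-set} applies directly) or $\Omega(\tau_H) = 0$; the latter is an SFT, so Theorem~\ref{t:limit-set}(iv) gives stability of $\tau_H$, hence of $\tau$.
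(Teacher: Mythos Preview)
Your argument is correct and follows essentially the same strategy as the paper: pass to the finitely generated subgroup $H$ generated by all relevant memory sets, use the coset-wise product decomposition $\Sigma \cong \prod_{t} \Sigma_H$ and $\Omega(\tau) \cong \prod_t \Omega(\tau_H)$ (which the paper packages as Lemma~\ref{l:restriction-ls}), and read off (i)--(v) from Theorem~\ref{t:limit-set} applied to $\tau_H$. The one substantive deviation is in (iii): the paper transfers the closed-image property of $\tau_H^n$ to $\tau^n$ via Lemma~\ref{l:restriction-cip} and then invokes the general fact $\Crec(f)\subset\Omega(f)$ for maps with closed iterated images, whereas you argue directly by restricting a chain-recurrence certificate to each coset---both are valid, and your route is arguably more self-contained.
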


\par 
As in \cite{ccp-2020}, the proof relies on the analysis of the so called \emph{space-time inverse system} 
associated with a CA (cf.~Section~\ref{s:space-time-system}). 
\par
Let $G$ be a group and let $A$ be a set. 
A CA $\tau \colon \Sigma_1 \to \Sigma_2$ between subshifts of $A^G$
is called \emph{pre-injective} if whenever $x,y \in A^G$ are two configurations that coincide outside of a finite subset of $G$ 
and satisfy $\tau(x)=\tau(y)$, then one has $x=y$. When $A$ is a vector space over a field $K$ and $\Sigma_1, \Sigma_2 \subset A^G$
are linear subshifts, a linear CA $\tau \colon \Sigma_1 \to \Sigma_2$ is pre-injective if and only if the
restriction of $\tau$ to the vector subspace of configurations in $\Sigma_1$ with finite support is injective.
A subshift $\Sigma \subset A^G$ is called \emph{strongly irreducible} 
if there exists a finite subset $\Delta \subset G$ such that for all $x, y \in A^G$ and for all finite subsets $E, F \subset G$ 
such that $E \cap F \Delta = \varnothing$, then there exists $z \in \Sigma$ such that $z\vert_E = x\vert_E$ and $z\vert_F = y\vert_F$.
\par
We obtain the following  characterization of limit sets of linear cellular automata in terms of pre-injectivity.

\begin{corollary}
\label{c:other-characterization}
Let $G$ be a polycyclic-by-finite group and let $A$ be a finite-dimensional vector space. 
Let $\Sigma \subset A^G$ be a strongly irreducible linear subshift and let $\tau \colon \Sigma \to \Sigma$ be a linear CA. 
Then $\Omega(\tau)$ is the largest strongly irreducible linear subshift $\Lambda \subset A^G$ contained in $\Sigma$ 
such that $\tau(\Lambda) \subset \Lambda$ and $\tau\vert_{\Lambda}$ is pre-injective.
\end{corollary}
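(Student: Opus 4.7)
The plan is to verify that $\Omega(\tau)$ itself lies in the family
\[
\FF \coloneqq \{\Lambda \subset \Sigma : \Lambda \text{ is a strongly irreducible linear subshift, } \tau(\Lambda) \subset \Lambda, \ \tau\vert_\Lambda \text{ is pre-injective}\},
\]
and then to show it is an upper bound for $\FF$. The two inputs internal to this paper are Corollary~\ref{c:LMT} (every linear subshift of $A^G$ is an SFT, since a polycyclic-by-finite group is of $K$-linear Markov type) and Theorem~\ref{t:limit-set}. The external input is the Garden of Eden theorem for linear cellular automata on strongly irreducible linear subshifts over amenable groups; polycyclic-by-finite groups are amenable, so it applies.

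First, I would show that $\Omega(\tau) \in \FF$. By Theorem~\ref{t:limit-set}(i), $\Omega(\tau)$ is a linear subshift, hence an SFT by Corollary~\ref{c:LMT}. Theorem~\ref{t:limit-set}(iv) then gives stability: there is $n_0 \geq 1$ with $\tau^{n}(\Sigma) = \Omega(\tau)$ for all $n \geq n_0$. A direct verification, using a memory set $M$ of $\tau$ and the strong irreducibility constant $\Delta$ of $\Sigma$, shows that the image of a strongly irreducible subshift under a CA is strongly irreducible (with irreducibility constant roughly $\Delta M M^{-1}$); iterating, $\Omega(\tau) = \tau^{n_0}(\Sigma)$ is strongly irreducible. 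By Theorem~\ref{t:limit-set}(ii), $\tau\vert_{\Omega(\tau)}$ is surjective onto $\Omega(\tau)$. The Garden of Eden theorem for linear CA (surjectivity $\Rightarrow$ pre-injectivity) applied to $\tau\vert_{\Omega(\tau)}\colon \Omega(\tau) \to \Omega(\tau)$ then yields that $\tau\vert_{\Omega(\tau)}$ is pre-injective. Thus $\Omega(\tau) \in \FF$.

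Second, I would prove maximality. Let $\Lambda \in \FF$. Apply the reverse direction of the Garden of Eden theorem (pre-injectivity $\Rightarrow$ surjectivity) to the linear CA $\tau\vert_\Lambda \colon \Lambda \to \Lambda$, which is pre-injective by assumption; this gives $\tau(\Lambda) = \Lambda$. By induction $\tau^n(\Lambda) = \Lambda$ for all $n \geq 1$, so $\Lambda = \tau^n(\Lambda) \subset \tau^n(\Sigma)$ for all $n$, whence
\[
\Lambda \subset \bigcap_{n \geq 1} \tau^n(\Sigma) = \Omega(\tau).
\]
The main obstacle is not the logical structure but the twofold appeal to auxiliary facts: (a) confirming that the image of a strongly irreducible linear subshift under a linear CA is itself strongly irreducible, which requires an explicit patching argument lifting through $\tau$ with enlarged buffer $M^{-1}\Delta M$; and (b) checking that the Garden of Eden theorem is available for linear CA on a strongly irreducible linear subshift (rather than on the full shift $A^G$) over an amenable group, in both directions. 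Both are known for amenable groups in the linear setting, but the references must be matched precisely to the hypotheses needed here.
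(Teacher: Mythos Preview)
Your proposal is correct and follows essentially the same route as the paper: both apply Corollary~\ref{c:LMT} to get the SFT property, Theorem~\ref{t:limit-set}(i),(ii),(iv) for the structure of $\Omega(\tau)$, the preservation of strong irreducibility under CA images, and both directions of the linear Garden of Eden theorem \cite[Theorem~1.2]{cc-goe-sft}. The only cosmetic difference is that the paper first proves maximality and then membership, and it makes explicit that $\Lambda$ is of finite type (via Corollary~\ref{c:LMT}) before invoking the pre-injective $\Rightarrow$ surjective direction, since \cite[Theorem~1.2]{cc-goe-sft} is stated for strongly irreducible linear SFTs; you flag this hypothesis-matching issue yourself, so there is no gap.
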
 

\par
Given a set $X$, one says that a map $f \colon X \to X$ is \emph{nilpotent} if there exist a constant map 
$c \colon X \to X$ and an integer $n_0 \geq 1$ such that $f^{n_0} = c$. This implies $f^n = c$ for all $n \geq n_0$. 
Such  a constant map $c$ is then unique and we say that the unique point $x_0 \in X$ such that 
$c(x) = x_0$ for all $x \in X$ is the \emph{terminal point} of $f$.
The terminal point of a nilpotent map is its unique fixed point.
Observe that if $f \colon X \to X$ is nilpotent with terminal point $x_0$ then $\Omega(f) = \{x_0\}$ is a singleton.
The converse is not true in general (cf.\ \cite[Proposition A.2.(ii)]{ccp-2020} and Example (1) in Section \ref{s:examples-nilp-linear}). 
\par
For linear cellular automata we establish the following characterization of nilpotency.

\begin{theorem}
\label{t:char-nilpotent-finite-lca-one}
Let $G$ be a group and let $A$ be a finite-dimensional vector space over a field $K$. 
Let $\Sigma \subset A^G$ be a linear-sofic subshift (e.g., a linear SFT) and let 
$\tau \colon \Sigma \to \Sigma$ be a linear CA.  
Then the following conditions are equivalent: 
\begin{enumerate}[\rm (a)]
\item
$\tau$ is nilpotent;
\item 
$\Omega(\tau) = \{0\}$.
\end{enumerate} 
\end{theorem}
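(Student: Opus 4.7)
The forward direction is essentially formal. If $\tau$ is nilpotent, then by definition $\Omega(\tau) = \{x_0\}$ for its terminal point $x_0$, and $x_0$ is the unique fixed point of $\tau$. Since $\tau$ is linear, $\tau(0) = 0$, so $0$ is a fixed point. Therefore $x_0 = 0$ and $\Omega(\tau) = \{0\}$.

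For the converse, the plan is to deduce nilpotency from stability. Assume $\Omega(\tau) = \{0\}$. Since $\Sigma$ is linear-sofic, Corollary~\ref{c:limit-set} applies; in particular, part~(v) tells us that if $\Omega(\tau)$ is finite-dimensional then $\tau$ is stable. The singleton $\{0\}$ is certainly finite-dimensional, so there exists $n \geq 1$ with $\tau^{n+1}(\Sigma) = \tau^n(\Sigma)$. Applying $\tau$ repeatedly to this equality yields $\tau^m(\Sigma) = \tau^n(\Sigma)$ for every $m \geq n$, hence
\[
\tau^n(\Sigma) = \bigcap_{m \geq 1} \tau^m(\Sigma) = \Omega(\tau) = \{0\}.
\]
Thus $\tau^n$ is the constant map sending every configuration in $\Sigma$ to $0$, which means $\tau$ is nilpotent with terminal point $0$.

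The only real content of the argument is the implication (b)$\Rightarrow$(a), and all the work there is hidden inside Corollary~\ref{c:limit-set}(v); once stability is in hand, the intersection identity for the limit set finishes the job in one line. The main obstacle was therefore already surmounted in proving Theorem~\ref{t:limit-set}(v) and its sofic extension Corollary~\ref{c:limit-set}, both of which are available to us here.
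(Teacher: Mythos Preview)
Your proof is correct and follows essentially the same route as the paper. The paper first reduces by hand to a finitely generated subgroup $H$ via Lemma~\ref{l:restriction-ls} and then invokes Theorem~\ref{t:LSFT-DCC} directly (noting that $\{0\}$ is trivially an SFT), whereas you cite Corollary~\ref{c:limit-set}(v), which packages exactly that reduction and that application; the underlying mechanism---stability of $\tau$ from the descending chain condition once $\Omega(\tau)$ is an SFT, then $\tau^{n}(\Sigma)=\Omega(\tau)=\{0\}$---is identical.
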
 

The analog of Theorem~\ref{t:char-nilpotent-finite-lca-one} for classical cellular automata follows from 
\cite[Theorem~3.5]{culik-limit-sets-1989}. In the algebraic setting, this corresponds to \cite[Theorem 1.4]{ccp-2020}.
\par
Given a set $X$, one says that a map $f \colon X \to X$ is \emph{pointwise nilpotent} if there exist a point $x_0 \in X$ 
such that for each $x \in X$ there exists an integer $n_x \geq 1$ such that $f^n(x) = x_0$ for all $n \geq n_x$. 
Such a point $x_0$ is clearly unique and it is called the \emph{terminal point} of $f$. 
If $f$ is nilpotent then it is also pointwise nilpotent and the terminal points relative to the two notions of nilpotency coincide.
\par
Let $G$ act on a Hausdorff topological space $X$. One says that the action is \emph{topologically mixing} provided that given
two nonempty open subsets $U, V \subset X$ there exists a finite subset $F \subset G$ such that $U \cap gV \neq \varnothing$
for all $g \in G \setminus F$. If $A$ is a set, a subshift $\Sigma \subset A^G$ is said to be \emph{topologically mixing} if
the restriction to $\Sigma$ of the $G$-shift is topologically mixing.
\par
We obtain the following characterization of nilpotency for linear cellular automata over infinite groups. 

\begin{theorem}
\label{t:char-nilpotent-finite-lca}
Let $G$ be a finitely generated infinite group and let $A$ be a finite-dimensional vector space over a field $K$. 
Let $\Sigma \subset A^G$ be a topologically mixing linear subshift (e.g., $\Sigma = A^G$) and let $\tau \colon \Sigma \to \Sigma$ be a linear CA.  
Then the following conditions are equivalent: 
\begin{enumerate}[\rm (a)]
\item
$\tau$ is nilpotent;
\item 
$\tau$ is pointwise nilpotent; 
\item 
there exists $n_0 \in \N$ such that $\tau^{n_0}(\Sigma)$ is finite-dimensional;
\item
$\Omega(\tau)$ is finite-dimensional;
\item 
$\Omega(\tau) = \{0\}$.
\end{enumerate} 
\end{theorem}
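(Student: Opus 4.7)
The plan is to establish the cycle $(a) \Rightarrow (b) \Rightarrow (c) \Rightarrow (d) \Rightarrow (e) \Rightarrow (a)$. The implications $(a) \Rightarrow (b)$, $(a) \Rightarrow (c)$, $(a) \Rightarrow (e)$, $(e) \Rightarrow (d)$, and $(c) \Rightarrow (d)$ (since $\Omega(\tau) \subset \tau^{n_0}(\Sigma)$ and any linear subspace of a finite-dimensional space is finite-dimensional) are immediate from the definitions. For $(e) \Rightarrow (a)$, note that $\Omega(\tau) = \{0\}$ is trivially finite-dimensional, so Theorem~\ref{t:limit-set}(v) yields stability of $\tau$: there exists $n_0$ with $\tau^{n_0}(\Sigma) = \bigcap_n \tau^n(\Sigma) = \Omega(\tau) = \{0\}$, i.e., $\tau^{n_0} \equiv 0$.

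The core step is $(d) \Rightarrow (e)$, where topological mixing enters crucially. Assume $\Omega(\tau)$ is finite-dimensional; Theorem~\ref{t:limit-set}(v) again gives $\tau^{n_0}(\Sigma) = \Omega(\tau)$ for some $n_0$. A direct verification shows that topological mixing is preserved under any continuous, surjective, $G$-equivariant map: preimages of nonempty open sets are nonempty open, and $G$-equivariance gives $g \cdot (\tau^{n_0})^{-1}(V) = (\tau^{n_0})^{-1}(gV)$. Hence $\Omega(\tau)$ inherits topological mixing from $\Sigma$ through $\tau^{n_0}$. The problem then reduces to the following lemma: if $G$ is infinite and $\Lambda \subset A^G$ is a topologically mixing finite-dimensional linear subshift, then $\Lambda = \{0\}$. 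To prove it, finite-dimensionality of $\Lambda$ forces the restriction map $x \mapsto x|_{F_0}$ to be injective on $\Lambda$ for some finite $F_0 \subset G$ (the decreasing family of linear subspaces $\{x \in \Lambda : x|_F = 0\}$ has trivial intersection and must stabilize in finite dimensions), so every singleton $\{x\} \subset \Lambda$ is open. If $x \in \Lambda$ were nonzero, topological mixing applied to the nonempty open sets $\{x\}$ and $\{0\}$ would produce some $g \in G$ with $\{x\} \cap g\{0\} = \{x\} \cap \{0\} \neq \varnothing$, forcing $x = 0$, a contradiction.

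The remaining step $(b) \Rightarrow (c)$ is a Baire category argument. Set $N_n \coloneqq \{x \in \Sigma : \tau^n(x) = 0\}$, a closed $K$-linear subspace of $\Sigma$. Since $\tau$ is linear, $\tau^n(0) = 0$, so the terminal point of pointwise nilpotency is $0$ and $\Sigma = \bigcup_{n \in \N} N_n$. As $G$ is finitely generated (hence countable), $A^G$ equipped with the prodiscrete topology is a complete metric space, hence Baire, and so is the closed subshift $\Sigma$. The Baire category theorem then yields some $n$ for which $N_n$ has nonempty interior; since $N_n$ is a linear subspace, it must contain an open neighborhood of $0$, hence a basic cylinder $\{x \in \Sigma : x|_F = 0\}$ for some finite $F \subset G$. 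Linearity gives $\tau^n(x) = \tau^n(y)$ whenever $x|_F = y|_F$, so $\tau^n$ factors through the finite-dimensional subspace $\Sigma|_F \subset A^F$, and $\tau^n(\Sigma)$ is therefore finite-dimensional.

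The main obstacle is the auxiliary lemma in the $(d) \Rightarrow (e)$ step, as this is where the hypotheses that $G$ is infinite and $\Sigma$ is topologically mixing are used essentially; it cleanly isolates the topological content that distinguishes genuine nilpotency from mere stability of $\tau$.
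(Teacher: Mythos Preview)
Your argument is correct and follows the same overall architecture as the paper's (Baire category for the pointwise-nilpotent step; stability via Theorem~\ref{t:limit-set}(v); a lemma that a finite-dimensional topologically mixing linear subshift is trivial for $(d)\Rightarrow(e)$). There are, however, two genuine local differences worth noting.

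First, for the Baire step the paper goes directly $(b)\Rightarrow(a)$: once some $N_{n_0}$ has nonempty interior, it invokes topological transitivity of $\Sigma$ (a closed $G$-invariant subset with nonempty interior of a topologically transitive $G$-space is the whole space) to conclude $N_{n_0}=\Sigma$. You instead prove $(b)\Rightarrow(c)$ using only linearity: nonempty interior of the subspace $N_n$ forces $N_n$ to contain a basic cylinder around $0$, so $\tau^n$ factors through the finite-dimensional space $\Sigma\vert_F$. This is cleaner and, notably, does not use the mixing hypothesis at this stage at all.

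Second, your auxiliary lemma (finite-dimensional topologically mixing linear subshift is $\{0\}$) is proved differently. The paper's Proposition~\ref{p:TopTransLinShifts} builds, under the weaker hypothesis of topological transitivity, an infinite linearly independent family inside $\Lambda$. You instead observe that finite dimensionality makes $\Lambda$ discrete in the subspace topology and then apply mixing to the open singletons $\{x\}$ and $\{0\}$; since $g\cdot 0=0$, this forces $x=0$. Your version is shorter but genuinely uses mixing (not just transitivity) together with the infinitude of $G$; the paper's version is slightly more general in that respect but less direct. Either way, both proofs feed into the same $(d)\Rightarrow(e)$ step after using stability to write $\Omega(\tau)=\tau^{n_0}(\Sigma)$ and pushing the mixing property forward along the surjective $G$-equivariant map $\tau^{n_0}$.
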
 

In the above theorem, we may relax the condition of being finitely generated on the infinite group $G$ provided
we assume that the subshift $\Sigma \subset A^G$ is, in addition, linear-sofic. We thus have the following.

\begin{corollary}
\label{c:char-nilpotent-finite-lca}
Let $G$ be an infinite group and let $A$ be a finite-dimensional vector space over a field $K$. 
Let $\Sigma \subset A^G$ be a topologically mixing linear-sofic subshift (e.g., $\Sigma = A^G$) 
and let $\tau \colon \Sigma \to \Sigma$ be a linear CA.  
Then conditions (a) -- (e) in Theorem \ref{t:char-nilpotent-finite-lca} are all equivalent.
\end{corollary}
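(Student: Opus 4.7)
The plan is to reduce to Theorem \ref{t:char-nilpotent-finite-lca} by passing to a finitely generated subgroup $H \leq G$ via coset decomposition. Since $\Sigma$ is linear-sofic, write $\Sigma = \pi(\Sigma')$ with $\Sigma' \subset B^G$ a linear SFT and $\pi \colon B^G \to A^G$ a linear CA, choose a common finite memory set $M$ for $\Sigma'$ and $\pi$, let $M'$ be a memory set for (an extension to $A^G$ of) $\tau$, and set $H \coloneqq \langle M \cup M' \rangle$. Pick a set of representatives $T \ni 1_G$ for the left cosets of $H$, so $G = \bigsqcup_{t \in T} tH$, and identify $A^G$ with $(A^H)^T$ via $x \mapsto (x_t)_{t \in T}$, $x_t(h) \coloneqq x(th)$. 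Since $M \cup M' \subset H$, a direct calculation yields $\Sigma \cong \Sigma_0^T$ and $\tau(x)_t = \tau_0(x_t)$, where $\Sigma_0 \subset A^H$ is the linear-sofic $H$-subshift and $\tau_0 \colon \Sigma_0 \to \Sigma_0$ is the $H$-linear CA built from the same local data as $\Sigma$ and $\tau$ respectively. Iterating gives $\tau^n(\Sigma) = \tau_0^n(\Sigma_0)^T$ and $\Omega(\tau) = \Omega(\tau_0)^T$, and in particular $\tau$ is nilpotent if and only if $\tau_0$ is. Topological mixing also transfers: for nonempty open $U_0, V_0 \subset \Sigma_0$, the cylinders $U \coloneqq \{x : x_{1_G} \in U_0\}$ and $V \coloneqq \{x : x_{1_G} \in V_0\}$ in $\Sigma$ satisfy $(hx)_{1_G} = h \cdot x_{1_G}$ for every $h \in H$ (the $G$-action on the coordinate at $1_G$ is the $H$-shift), so $U \cap hV \neq \varnothing$ if and only if $U_0 \cap (h \cdot V_0) \neq \varnothing$; restricting the mixing bound for $\Sigma$ on $G$ to $H$ therefore yields one for $\Sigma_0$.

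The implications (a) $\Rightarrow$ (b),(c),(d),(e) are immediate, and (e) $\Rightarrow$ (a) is Theorem \ref{t:char-nilpotent-finite-lca-one} applied to $\Sigma$ itself. For (b),(c),(d) $\Rightarrow$ (a) I split into two cases. If $[G : H] < \infty$, then $T$ is finite and $H$ is infinite (since $G$ is), so Theorem \ref{t:char-nilpotent-finite-lca} applies to the topologically mixing linear subshift $\Sigma_0 \subset A^H$ and gives the equivalence of (a)--(e) for $\tau_0$; transferring through the finite product yields the equivalence for $\tau$. If $T$ is infinite, the infinite product structure alone forces nilpotency of $\tau_0$: from (c), $\tau_0^{n_0}(\Sigma_0)^T$ being finite-dimensional with $T$ infinite forces $\tau_0^{n_0}(\Sigma_0) = \{0\}$; from (d), $\Omega(\tau_0)^T$ being finite-dimensional forces $\Omega(\tau_0) = \{0\}$, whence $\tau_0$ is nilpotent by Theorem \ref{t:char-nilpotent-finite-lca-one}; and from (b), if $\tau_0$ were not nilpotent, choosing $y_k \in \Sigma_0$ with $\tau_0^k(y_k) \neq 0$ for each $k$ and distinct $t_k \in T$, the configuration $x \in \Sigma$ defined by $x_{t_k} = y_k$ and $x_t = 0$ otherwise would satisfy $\tau^k(x)_{t_k} = \tau_0^k(y_k) \neq 0$ for every $k$, contradicting pointwise nilpotency of $\tau$.

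The main obstacle is the coset-decomposition bookkeeping: verifying that the defining data of $\Sigma$ (as a linear-sofic subshift), of $\tau$, and of the topological mixing hypothesis all descend cleanly to $H$. This relies crucially on all memory sets lying inside $H$, so that translates within a single coset remain in that coset. Once these transfers are in place, the two-case analysis together with Theorems \ref{t:char-nilpotent-finite-lca-one} and \ref{t:char-nilpotent-finite-lca} concludes the proof.
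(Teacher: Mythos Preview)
Your proof is correct and follows essentially the same restriction-to-$H$ strategy as the paper, via the coset product decomposition $\Sigma \cong \Sigma_H^{G/H}$ (Section~\ref{s:restriction-ls} and Lemma~\ref{l:restriction-ls}). There are two minor differences worth noting. First, the paper disposes of the finitely generated case by invoking Theorem~\ref{t:char-nilpotent-finite-lca} directly on $\tau$ and then observes that $[G:H]=\infty$ whenever $G$ is not finitely generated, whereas you split on $[G:H]$ finite versus infinite and in the finite-index case descend to $\tau_0$ first; both routes work, yours just duplicates a small amount of effort. Second, for (b) $\Rightarrow$ (a) in the infinite-index case you build an explicit configuration witnessing failure of pointwise nilpotency when $\tau_0$ is not nilpotent, while the paper transfers pointwise nilpotency to $\tau_H$ and then quotes the implication (b) $\Rightarrow$ (a) of Theorem~\ref{t:char-nilpotent-finite-lca}. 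Your argument is slightly more self-contained here, since invoking Theorem~\ref{t:char-nilpotent-finite-lca} on $\tau_H$ tacitly requires $\Sigma_H$ to be topologically mixing and $H$ to be infinite---facts the paper does not spell out, but which you do verify for your finite-index case.
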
 

The analog of Theorem~\ref{t:char-nilpotent-finite-lca-one} for classical cellular automata follows from 
\cite[Theorem~3.5]{culik-limit-sets-1989} (see also \cite[Corollary 4]{guillon-richard-2008}). 
In the algebraic setting, this corresponds to \cite[Theorem 1.5]{ccp-2020}, 
but the equivalence of (a), (b), (c), and (e) with the point (d) is a new result.\\  

The paper is organized as follows. In Section \ref{s:preliminaries} we fix notation and establish some preliminary results.
In particular, we study linear SFTs and show that every finite-dimensional linear subshift $\Sigma \subset A^G$ is of finite type
if the group $G$ is finitely generated (Proposition \ref{p:FD-SFT}). 
We introduce the notion of a memory set for a linear-sofic subshift $\Sigma$ and,
given a cellular automaton $\tau \colon \Sigma \to \Sigma$, we review the properties of the restriction cellular automaton
$\tau_H \colon \Sigma_H \to \Sigma_H$, for any subgroup $H$ containing both a memory set for $\tau$ and a memory set for $\Sigma$.
As an application, we establish a relation between the limit sets $\Omega(\tau)$ and $\Omega(\tau_H)$ of $\tau$ and $\tau_H$, 
respectively, and deduce that $\tau$ is nilpotent if and only if $\tau_H$ is nilpotent (Lemma \ref{l:restriction-ls}).
In Section \ref{s:STIS} we review from \cite{ccp-2020} the notion of space-time inverse system, together with its inverse limit,
associated with a cellular automaton $\tau \colon \Sigma \to \Sigma$, where $\Sigma$ is a linear-sofic subshift in $A^G$, 
with $G$ a countable group and $A$ a finite-dimensional vector space over a field $K$. As an application, in the subsequent section
we prove the \emph{closed image property} for linear cellular automata: we show that essentially under the above assumptions,
$\tau(\Sigma)$ is closed in the prodiscrete topology in $A^G$ (Theorem \ref{t:closed-image}).
In Section \ref{s:proofs} we present the proofs of all results stated in the Introduction.
In Section \ref{ss:GLMT} we further investigate the class of groups of $K$-linear Markov type. We show
that this class is closed under the operation of taking subgroups, quotients, and extensions by finite and cyclic groups,
and that it is contained in the class of Noetherian groups (the latter are the groups satisfying the maximal condition on subgroups).
Finally, in the last section we present some examples/counterexamples and discuss some further remarks.
In particular, in Subsection~\ref{s:CIP-examples} we present an example of a linear cellular automaton $\tau \colon A^G \to A^G$, 
where $G$ is any non-periodic group (e.g., $G = \Z$) and $A$ is any infinite-dimensional vector space, which does not satisfy the closed image property.
At last, in Subsection~\ref{s:examples-nilp-linear} we study nilpotency and pointwise nilpotency for
linear cellular automata over infinite-dimensional vector spaces and present some examples of the associated limit sets.
As a byproduct, we show that the conclusions of Theorem \ref{t:char-nilpotent-finite-lca}
may fail to hold, in general, if the finite-dimensionality of the alphabet set $A$ is dropped from the assumptions.\\

\noindent
{\bf Acknowledgments.} 
We express our deepest gratitude to the anonymous referee for his thorough reading, for pointing out to us a few inaccuracies, and 
providing most valuable comments and remarks.

\section{Preliminaries}
\label{s:preliminaries}

\subsection{Notation}
We use the symbols $\Z$ for the integers and $\N$ for the non-negative integers.
\par
We write  $A^B$ for the set consisting of all maps from a set $B$ into a set $A$. 
Let $C \subset B$. If $x \in A^B$, we denote by $x\vert_C$ the restriction of $x$ to $C$, that is, the map $x\vert_C \colon C \to A$ given by $x\vert_C(c) = x(c)$ for all $c \in C$.
If $X \subset A^B$, we set $X_C \coloneqq \{x\vert_C \colon x \in X \} \subset A^C$. 
\par
Let $E,F$ be subsets of a group $G$.
We write $E F \coloneqq \{g h : g \in E, h \in F\}$
and  define inductively $E^n$ for all $n \in \N$ by setting $E^0 \coloneqq  \{1_G\}$ and $E^{n + 1} \coloneqq  E^n E$.
\par
Let $A$ be a set and let $E$ be a subset of a group $G$. 
Given $x \in A^E$, we define $gx \in A^{gE}$ by $(gx)(h) \coloneqq x(g^{-1}h)$ for all $h \in g E$. 

\subsection{Topologically transitive linear subshifts}
An action of a group $G$ on a Hausdorff topological space $X$ is called \emph{topologically transitive} 
provided that given any two nonempty open subsets $U, V \subset X$ there exists $g \in G$ such that $U \cap gV \neq \varnothing$.
Moreover, if $A$ is a set, a subshift $\Sigma \subset A^G$ is said to be \emph{topologically transitive} if
the restriction to $\Sigma$ of the $G$-shift is topologically transitive.
It is straightforward that if the acting group $G$ is infinite, then every topologically mixing $G$-action 
(e.g., any topologically mixing subshift $\Sigma \subset A^G$) is topologically transitive.
\par
The following result, which we shall use in the proof of Theorem \ref{t:char-nilpotent-finite-lca}, has some interest on its own.
We thank the referee for pointing out a gap in our original argument and providing us with an outline of the present proof.

\begin{proposition}
\label{p:TopTransLinShifts}
Let $G$ be a group and let $A$ be a vector space over a field $K$. 
Let $\Sigma \subset A^G$ be a linear subshift and suppose that $\Sigma$ is topologically transitive and finite dimensional.
Then $\Sigma = \{0\}$.
\end{proposition}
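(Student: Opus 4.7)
The plan is to show that finite-dimensionality forces $\Sigma$ to be discrete in the prodiscrete topology, and then derive a contradiction with topological transitivity from the fact that $0 \in \Sigma$ is a global $G$-fixed point.

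First, I would find a finite window of coordinates that already determines all of $\Sigma$. For each finite $F \subset G$, the restriction map $\pi_F \colon \Sigma \to A^F$, $x \mapsto x\vert_F$, is $K$-linear, so its image $\Sigma_F$ is a $K$-subspace of $A^F$ with $\dim_K \Sigma_F \leq d \coloneqq \dim_K \Sigma < \infty$. The function $F \mapsto \dim_K \Sigma_F$ is nondecreasing with respect to inclusion and bounded by $d$, so it attains its maximum at some finite $F_0 \subset G$. For every finite $F \supset F_0$, the surjective linear map $\Sigma_F \to \Sigma_{F_0}$ is then an isomorphism by equality of dimensions. Since $\Sigma$ is closed in $A^G$, it equals the inverse limit of the $\Sigma_F$ along the restriction maps; combined with the compatibility just established, this shows that $\pi_{F_0} \colon \Sigma \to \Sigma_{F_0}$ is injective. (Equivalently: if $z \in \Sigma$ satisfies $z\vert_{F_0} = 0$ then for every finite $F \supset F_0$ the element $z\vert_F \in \Sigma_F$ lies in the kernel of the isomorphism $\Sigma_F \to \Sigma_{F_0}$, so $z\vert_F = 0$, whence $z = 0$.)

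Next, I would deduce that $\Sigma$ is discrete. For every $x \in \Sigma$, the basic prodiscrete neighborhood
\[
\{y \in \Sigma : y\vert_{F_0} = x\vert_{F_0}\}
\]
of $x$ reduces to $\{x\}$ by the injectivity of $\pi_{F_0}$. Hence every singleton in $\Sigma$ is open.

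Finally, I would invoke topological transitivity. Since $\Sigma$ is linear, $0 \in \Sigma$, and because the shift action is linear, $g \cdot 0 = 0$ for all $g \in G$. Suppose for contradiction that there exists $x \in \Sigma$ with $x \neq 0$. Then $U \coloneqq \{x\}$ and $V \coloneqq \{0\}$ are two nonempty open subsets of $\Sigma$. By topological transitivity, there exists $g \in G$ with $U \cap g V \neq \varnothing$; but $g V = \{0\}$, forcing $x = 0$, a contradiction. Therefore $\Sigma = \{0\}$.

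The only subtle point is the first step: the dimension-counting argument must yield injectivity of $\pi_{F_0}$ on all of $\Sigma$, not merely on finite restrictions. This requires that $\Sigma$ be identified with the inverse limit of its finite restrictions, which is precisely the content of $\Sigma$ being closed in the prodiscrete topology. Everything else is a short consequence.
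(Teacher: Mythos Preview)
Your proof is correct and takes a genuinely different route from the paper's. The paper argues by contradiction and uses topological transitivity \emph{constructively}: starting from a nonzero $x_0 \in \Sigma$, it inductively produces configurations $z_1, z_2, \ldots \in \Sigma$ with $z_n(g_0) = \cdots = z_n(g_{n-1}) = 0$ and $z_n(g_n) = a \neq 0$, which are then manifestly linearly independent, contradicting finite-dimensionality. Your approach instead extracts a structural consequence of finite-dimensionality first (the existence of a finite window $F_0$ on which restriction is injective, hence discreteness of $\Sigma$), and only then invokes transitivity in a single stroke via the $G$-fixed open point $\{0\}$. Your argument is shorter and more conceptual, and it makes transparent that the result really only uses that a topologically transitive space with a fixed isolated point must be a singleton; the paper's argument, by contrast, is more explicit and avoids any appeal to discreteness. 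A minor remark: in your first step the closedness of $\Sigma$ is not actually needed---your parenthetical argument already shows $\ker \pi_{F_0} = 0$ directly, since $z\vert_F = 0$ for all finite $F \supset F_0$ trivially forces $z = 0$.
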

\begin{proof}
Suppose, by contradiction, that $\Sigma$ is nontrivial. Let $x_0 \in \Sigma \setminus \{0\}$ and let $g_0 \in G$ such that
$a \coloneqq x_0(g_0) \neq 0_A$. 
Consider the open subsets $U_1 \coloneqq \{x \in \Sigma: x(g_0) = 0_A\}$ and $V_1 \coloneqq \{x \in \Sigma: x(g_0) = a\}$.
Note that $U_1 \neq \varnothing$ since $0 \in U_1$ and $V_1 \neq \varnothing$ since $x_0 \in V_1$. By topological transitivity,
we can find $h_1 \in G$ such that $U_1 \cap h_1V_1 \neq \varnothing$. 
Thus, if $z_1 \in U_1 \cap h_1V_1$ and $g_1 \coloneqq h_1g_0$, we have $z_1(g_0) = 0_A$ and $z_1(g_1) = a$.
Consider now the open subsets $U_2 \coloneqq \{x \in \Sigma: x(g_0) = x(g_1) = 0_A\}$ and 
$V_1 \coloneqq \{x \in \Sigma: x(g_0) = 0 \mbox{ and } x(g_1) = a\}$.
Note that $U_2 \neq \varnothing$ since $0 \in U_2$ and $V_2 \neq \varnothing$ since $x_1 \in V_2$. By topological transitivity,
we can find $h_2 \in G$ such that $U_2 \cap h_2V_2 \neq \varnothing$. Thus, if $z_2 \in U_2 \cap h_2V_2$ and 
$g_2 \coloneqq h_2g_1$, we have $z_2(g_0) = z_2(g_1) = 0_A$ and $z_1(g_2) = a$.
Continuing this way, we inductively find $z_1, z_2, \ldots \in \Sigma$ and group elements 
$g_1, g_2, \ldots \in G$  such that 
\[
z_n(g_0) = z_n(g_1) = \cdots = z_n(g_{n-1}) = 0_A \ \mbox{ and } \ z_n(g_n) = a
\]
for all $n \geq 1$. 
As $a \not= 0_A$, it is straightforward that the configurations $z_1, z_2, \ldots$ are linearly independent, contradicting the assumption
that $\Sigma$ is finite dimensional. We deduce that $\Sigma = \{0\}$.
\end{proof}

\subsection{Linear subshifts of finite type}
We begin with two simple useful facts.

\begin{lemma}
\label{l:base-change-sft}
Let $A$ be a set and let $G$ be a group.
Let $D \subset G$ and let $P \subset A^D$.
Let $\Sigma \coloneqq  \Sigma(D, P) \subset A^G$.
Let  $E \subset G$ such that $D \subset E$.
Then one has $\Sigma = \Sigma(E, \Sigma_E)$. In particular, $\Sigma = \Sigma(D, \Sigma_D)$.
\end{lemma}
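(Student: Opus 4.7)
The plan is to prove the equality $\Sigma = \Sigma(E, \Sigma_E)$ by a double inclusion, relying only on the definition~\eqref{e:sft} of an SFT-type set and on the (obvious) $G$-invariance of $\Sigma$, which is recorded right after~\eqref{e:sft}.

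For the inclusion $\Sigma \subset \Sigma(E, \Sigma_E)$, I would fix $x \in \Sigma$ and $g \in G$, use $G$-invariance to get $g^{-1} x \in \Sigma$, and then read off $(g^{-1}x)\vert_E \in \Sigma_E$ directly from the definition of $\Sigma_E = \{y\vert_E : y \in \Sigma\}$. Since $g$ is arbitrary, this gives $x \in \Sigma(E, \Sigma_E)$.

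For the reverse inclusion $\Sigma(E, \Sigma_E) \subset \Sigma$, I would take $x \in \Sigma(E,\Sigma_E)$ and, for any fixed $g \in G$, use the definition of $\Sigma_E$ to produce some $y \in \Sigma$ with $y\vert_E = (g^{-1}x)\vert_E$. Restricting further to $D$ (which is legitimate since $D \subset E$) gives $y\vert_D = (g^{-1}x)\vert_D$. Applying~\eqref{e:sft} to $y \in \Sigma = \Sigma(D,P)$ with the group element $h = 1_G$ yields $y\vert_D \in P$. Hence $(g^{-1}x)\vert_D \in P$ for every $g \in G$, that is, $x \in \Sigma(D,P) = \Sigma$. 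The ``in particular'' clause is the special case $E = D$.

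I do not foresee any substantive obstacle: the argument is a purely formal unwinding of the two definitions. The only minor point to keep track of is the double use of~\eqref{e:sft}: via $G$-invariance of $\Sigma$ in one direction, and via the specialization $h = 1_G$ (which extracts $y\vert_D \in P$ from membership of $y$ in $\Sigma$) in the other.
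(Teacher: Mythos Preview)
Your proof is correct and follows essentially the same double-inclusion argument as the paper's own proof. The only cosmetic difference is that for the reverse inclusion the paper compresses your choice of a witness $y \in \Sigma$ into the one-line chain $(g^{-1}x)\vert_D = ((g^{-1}x)\vert_E)\vert_D \in (\Sigma_E)_D = \Sigma_D \subset P$, but the underlying reasoning is identical.
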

\begin{proof}
Let $x \in \Sigma$ and let $g \in G$.
Then $(g^{-1}x) \vert_E \in \Sigma_E$.
Thus $\Sigma \subset \Sigma(E, \Sigma_E)$.
Conversely, let
$x \in \Sigma(E, \Sigma_E)$. Then, for every $g \in G$, we have
$(g^{-1}x)\vert_D =  ((g^{-1}x)\vert_E)\vert_D\in (\Sigma_E)_D = \Sigma_D \subset P$, since $D \subset E$.
Therefore, $x \in \Sigma(D,P)=\Sigma$, and the conclusion follows.
\end{proof}

The following lemma states that every linear SFT admits a defining set of admissible patterns which is a vector space.

\begin{lemma}
\label{l:W-subspace}
Let $G$ be a group and let $A$ be a vector space over a field $K$.
Let $\Sigma \subset A^G$ be a linear SFT and let $D \subset G$ be a memory set for $\Sigma$.
Then there exists a vector subspace $W \subset A^D$ such that $\Sigma = \Sigma(D,W)$. 
\end{lemma}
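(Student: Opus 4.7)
The plan is to take $W$ to be the restriction $\Sigma_D = \{x\vert_D : x \in \Sigma\} \subset A^D$ itself and check the two requirements: that $W$ is a vector subspace of $A^D$, and that $\Sigma = \Sigma(D, W)$.

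For the first point, I would observe that the restriction map $\rho_D \colon A^G \to A^D$, defined by $\rho_D(x) \coloneqq x\vert_D$, is $K$-linear, being simply the projection onto the coordinates indexed by $D$. Since $\Sigma$ is by hypothesis a vector subspace of $A^G$, its image $W \coloneqq \rho_D(\Sigma) = \Sigma_D$ is automatically a vector subspace of $A^D$.

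For the second point, I would invoke Lemma~\ref{l:base-change-sft} with $E = D$: applied to the description $\Sigma = \Sigma(D,P)$ coming from the fact that $D$ is a memory set for $\Sigma$, it yields $\Sigma = \Sigma(D, \Sigma_D) = \Sigma(D, W)$.

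I do not anticipate any real obstacle here; the only thing to be slightly careful about is that Lemma~\ref{l:base-change-sft} is applied with the trivial inclusion $D \subset D$, which is exactly the ``In particular'' clause in its statement, and that the linearity of $\Sigma$ is used solely to guarantee that $\Sigma_D$ is a subspace of $A^D$.
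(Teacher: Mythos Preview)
Your proposal is correct and is essentially identical to the paper's own proof: the paper also sets $W \coloneqq \Sigma_D$, notes that this is a vector subspace of $A^D$, and invokes Lemma~\ref{l:base-change-sft} to conclude $\Sigma = \Sigma(D,W)$.
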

\begin{proof}
The set $W \coloneqq \Sigma_D = \{x\vert_D: x \in \Sigma\} \subset A^D$
is a vector subspace of $A^D$ and we have $\Sigma = \Sigma(D,W)$ by Lemma \ref{l:base-change-sft}.
\end{proof}

\begin{proposition}
\label{p:FD-SFT}
Let $G$ be a finitely generated group and let $A$ be a vector space over a field $K$.
Then every finite-dimensional linear subshift $\Sigma \subset A^G$ is of finite type.
\end{proposition}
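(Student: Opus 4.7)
The plan is to exhibit an explicit finite memory set $D \subset G$ such that $\Sigma = \Sigma(D, \Sigma_D)$; by Lemmas~\ref{l:base-change-sft} and~\ref{l:W-subspace} this realises $\Sigma$ as a linear SFT. The two ingredients I will use are the finite-dimensionality of $\Sigma$, which makes a restriction map injective on some finite window, and the finite generation of $G$, which lets me bootstrap this injectivity via the shift action.

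\textbf{Step 1 (injective window).} For each finite $F \subset G$, let $\pi_F \colon \Sigma \to A^F$ denote the restriction. Since every nonzero element of $\Sigma$ is nonzero at some point of $G$, the intersection of the kernels $\ker(\pi_F)$ over all finite $F$ is trivial. These kernels form a downward-directed family of subspaces of the finite-dimensional space $\Sigma$, so some $\ker(\pi_{F_0})$ attains minimum dimension; for any larger finite $F \supset F_0$, the inclusion $\ker(\pi_F) \subset \ker(\pi_{F_0})$ together with minimality of the dimension forces equality, and the triviality of the global intersection then yields $\ker(\pi_{F_0}) = \{0\}$. So fix a finite $F \subset G$ for which $\pi_F$ is injective.

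\textbf{Step 2 (memory set and patching).} Let $S \subset G$ be a finite symmetric generating set with $1_G \in S$, and set $D \coloneqq S F$. The inclusion $\Sigma \subset \Sigma(D, \Sigma_D)$ is immediate from $G$-invariance. For the converse, fix $y \in \Sigma(D, \Sigma_D)$: for each $g \in G$ there is a unique $x_g \in \Sigma$ with $x_g\vert_D = (g^{-1} y)\vert_D$, uniqueness coming from $F \subset D$ and the injectivity of $\pi_F$. The key identity is the cocycle relation
\[
x_{gs} = s^{-1} x_g \qquad \text{for all } g \in G \text{ and } s \in S.
\]
Both sides lie in $\Sigma$, so by injectivity of $\pi_F$ it suffices to compare them on $F$; unwinding the shift on each side, both restrictions coincide with the pattern $(g^{-1} y)\vert_{sF}$, which is well-defined precisely because $sF \subset SF = D$.

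\textbf{Step 3 (conclusion and main obstacle).} Writing any $g \in G$ as a word $g = s_1 \cdots s_k$ in $S$ and iterating the cocycle relation gives $x_g = g^{-1} x_{1_G}$. Evaluating at $1_G$ and using $x_g(1_G) = (g^{-1} y)(1_G) = y(g)$, one concludes $y(g) = x_{1_G}(g)$ for every $g \in G$, whence $y = x_{1_G} \in \Sigma$. The only subtle point in the argument is bookkeeping: $F$ must be enlarged to $D$ in the \emph{correct} direction so that the cocycle identity actually holds at the level of $F$, which is what forces the choice $D = SF$ rather than $FS$. Once this direction is set, the finite-dimensionality of $\Sigma$ and its $G$-invariance do all of the work, and the finite generation of $G$ enters only through the choice of word in $S$ used in this last step.
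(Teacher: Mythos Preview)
Your proof is correct and follows essentially the same strategy as the paper's: find a finite window $F$ on which the restriction $\Sigma \to A^F$ is injective, enlarge it to $D = SF$, and prove the identity $x_g = g^{-1}x_{1_G}$ by iterating along words in the generating set (the paper phrases this as an induction on word length and takes $F$ to be a ball $B_{n_0}$, but the mechanics are identical). One small omission: your final step uses $x_g(1_G) = (g^{-1}y)(1_G)$, which needs $1_G \in D = SF$; this is not automatic from your hypotheses, but is harmless once you arrange $1_G \in F$ (enlarging $F$ only preserves injectivity), as the paper does implicitly by choosing $F$ to be a word-metric ball.
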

\begin{proof}
Let $S \subset G$ be a finite generating subset of $G$.
After replacing $S$ by $S \cup S^{-1} \cup \{1_G\}$, we can assume that $S = S^{-1}$ and $1_G \in S$.
Then, given any element $g \in G$, there exist $n \in \N$ and $s_1, s_2, \ldots, s_n \in S$ such that
$g = s_1s_2 \cdots s_n$. The minimal $n \in \N$ in such an expression of $g$ is the $S$-length of $g$,
denoted by $\ell_S(g)$. For every $n \in \N$ we set $B_n \coloneqq \{g \in G: \ell_S(g) \leq n\}$. 
\par
Let $\Sigma \subset A^G$ be a finite-dimensional linear subshift. 
For every $n \in \N$ denote by $\pi_n \colon \Sigma \to \Sigma_{B_n}$ the restriction map.
Note that $\pi_n$ is linear and that setting $\Sigma_n \coloneqq \ker \pi_n$ we have that
$(\Sigma_n)_{n \in \N}$ is a decreasing sequence of vector subspaces of $\Sigma$.
Now, on the one hand, since $\bigcup_{n \in \N} B_n = G$, we have $\bigcap_{n \in \N} \Sigma_n = \{0\}$. 
On the other hand, since $\Sigma$ is finite-dimensional, the above sequence
eventually stabilizes, i.e., there exists $n_0 \in \N$ such that $\Sigma_n = \Sigma_{n_0}$ for all $n \geq n_0$. 
We deduce that $\Sigma_{n_0} = \{0\}$. Thus, setting $\Delta \coloneqq B_{n_0}$, the restriction map 
$\Sigma \to \Sigma_\Delta$ is injective (in fact bijective).
\par
Set $D \coloneqq S \Delta \subset G$ and $W \coloneqq \Sigma_D \subset A^D$, and let us show that $\Sigma = \Sigma(D,W)$.
\par
Let $x \in \Sigma$. Then for every $g \in G$ we have $g^{-1}x \in \Sigma$ so that $(g^{-1}x)\vert_D \in \Sigma_D = W$.
This shows that $x \in \Sigma(D,W)$, and the inclusion $\Sigma \subset \Sigma(D,W)$ follows.
\par
Conversely, suppose that $x \in \Sigma(D,W)$. 
By definition of $\Sigma(D,W)$, for every $g \in G$, there exists $x_g \in \Sigma$ such that $(g^{-1} x)\vert_D = (x_g)\vert_D$.
Observe that, given $g \in G$, such an $x_g$ is unique since $\Delta \subset D$.
Let us show, by induction on the $S$-length of $g$, that
\begin{equation}
\label{e:x-g-g-inv-x-1-G}
x_g = g^{-1}x_{1_G}
\end{equation}
for all $g \in G$. 
If $\ell_S(g) = 0$, then $g = 1_G$ and there is nothing to prove.  
Suppose now that~\eqref{e:x-g-g-inv-x-1-G} is satisfied for all $g \in G$ such that $\ell_S(g) = n$
and let $h \in G$ such that $\ell_S(h) = n + 1$.
Then there exist $g \in G$ with $\ell_S(g) = n$ and $s \in S$ such that $h = g s$.
For all $d \in \Delta$, we have
\begin{align*}
x_h(d) &= (h^{-1} x)(d) && \text{(since $\Delta \subset D$)} \\ 
&= (g^{-1} x)(s d) && \text{(since $h = g s$)} \\ 
&= x_g(s d) && \text{(since $S \Delta \subset D$)} \\
&= (g^{-1} x_{1_G})(s d) && \text{(by our induction hypothesis)} \\ 
&= (h^{-1} x_{1_G})(d) && \text{(since $h = g s$).}
\end{align*}
Thus $x_h$ and $h^{-1} x_{1_G}$ coincide on $\Delta$.
As $x_h, h^{-1} x_{1_G} \in \Sigma$, this implies that $x_h = h^{-1} x_{1_G}$.
By induction, we conclude that~\eqref{e:x-g-g-inv-x-1-G} holds for all $g \in G$. 
Since $1_G \in D$, we deduce that
\[ 
x(g) = (g^{-1}x)(1_G) = x_g(1_G) = (g^{-1}x_{1_G})(1_G) = x_{1_G}(g)
\]
for all $g \in G$.
This shows that $x = x_{1_G} \in \Sigma$, and the inclusion $\Sigma(D,W) \subset \Sigma$ follows.
\par
In conclusion, $\Sigma = \Sigma(D,W)$ is a subshift of finite type.
\end{proof}

The condition that $G$ is finitely generated cannot be removed from the assumptions in Proposition \ref{p:FD-SFT}.
In fact we have the following (cf.\ \cite[Lemma 1]{salo}; see also Section~\ref{ss:GLMT}).

\begin{corollary}
\label{c:constant-conf-SFT}
Let $G$ be a group and let $A$ be a nontrivial finite dimensional vector space over a field $K$. 
Consider the subshift $\Sigma \subset A^G$ consisting of all constant configurations.
Then $\Sigma$ is a SFT if and only if $G$ is finitely generated.
\end{corollary}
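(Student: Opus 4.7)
The plan is to handle the two implications separately; neither is deep.

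For the ``if'' direction, I would observe that the evaluation map $x \mapsto x(1_G)$ is a $K$-linear bijection from $\Sigma$ onto $A$, so $\Sigma$ is finite-dimensional of dimension $\dim_K A$. Since $G$ is assumed finitely generated, Proposition \ref{p:FD-SFT} immediately yields that $\Sigma$ is an SFT.

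For the converse, I would argue by contrapositive. Assume $\Sigma$ is an SFT. By Lemma \ref{l:W-subspace} we may write $\Sigma = \Sigma(D,W)$ with $D \subset G$ finite and $W \subset A^D$ a vector subspace. Let $H \coloneqq \langle D \rangle$ be the subgroup of $G$ generated by $D$; the aim is to prove $H = G$, which will force $G$ to be finitely generated by $D$. Suppose, for contradiction, that $H \subsetneq G$ and pick $g_0 \in G \setminus H$. Since $A$ is nontrivial, fix $a \in A \setminus \{0\}$ and define $x \in A^G$ by $x(g) = a$ for $g \in H$ and $x(g) = 0$ for $g \notin H$. Then $x(1_G) = a \neq 0 = x(g_0)$, so $x$ is not constant and hence $x \notin \Sigma$. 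To derive the desired contradiction, I would verify that $(g^{-1}x)\vert_D \in W$ for every $g \in G$: since $D \subset H$ and $H$ is a subgroup, the translate $gD$ lies entirely inside $H$ when $g \in H$ and entirely outside $H$ when $g \notin H$. Consequently $(g^{-1}x)\vert_D$ is either the constant-$a$ pattern on $D$ or the zero pattern on $D$, and in both cases it is the restriction to $D$ of a constant configuration in $\Sigma$, hence lies in $\Sigma_D = W$. This gives $x \in \Sigma(D,W) = \Sigma$, contradicting $x \notin \Sigma$, and completes the proof.

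The main obstacle is really just the choice of the witness $x$; the key observation is that using a nonzero scalar multiple of the indicator function of the subgroup $H$ guarantees that every translate $gD$ lands in a single coset relative to $H$, so the window $(g^{-1}x)\vert_D$ is automatically a pattern of one of the two constant configurations $0$ and $a \cdot \mathbf{1}_G$. Once this is observed the verification is immediate, and this is what prevents $D$ from ever being able to ``detect'' non-constancy unless $D$ generates the whole group.
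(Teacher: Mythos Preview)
Your proof is correct and follows essentially the same approach as the paper's. The only cosmetic differences are that the paper uses an arbitrary defining set $P$ rather than invoking Lemma~\ref{l:W-subspace}, and defines the witness configuration with the roles of $0$ and $a$ swapped (zero on $H$, value $a$ off $H$); the key observation---that $gD$ lies in a single left coset of $H = \langle D\rangle$, so the $D$-window always sees a constant pattern---is identical.
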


\begin{proof}
As the linear subshift $\Sigma$ satisfies $\dim_K(\Sigma) = \dim_K(A) < \infty$,
it is a SFT whenever $G$ is finitely generated by Proposition~\ref{p:FD-SFT}.
\par
Conversely, suppose that $\Sigma$ is an SFT.
Thus there exists a finite subset $D \subset G$ and $P \subset A^D$ such that $\Sigma = \Sigma(D,P)$.
Consider the subgroup $H \subset G$ generated by $D$ and let $a \in A$ such that $a \not= 0$. 
Then the configuration $x \in A^G$ such that $x(g) = 0$ if $g \in H$ and $x(g) = a$ otherwise 
belongs to $\Sigma(D,P)$ since, for each $g \in G$, either $gD \subset H$ or $gD \subset G \setminus H$.
As $\Sigma(D,P) = \Sigma$ and every configuration in $\Sigma$ is constant, we conclude that $H = G$.
Therefore $G$ is finitely generated.
\end{proof}

\subsection{Restriction of linear-sofic subshifts and of linear CAs}
\label{s:restriction-ls} 
Let $G$ be a group and let $A$ be a vector space over a field $K$. 
Recall that a linear subshift $\Sigma \subset A^G$ is said to be a \emph{linear-sofic subshift} 
if there exists a vector space $B$ over $K$,
an SFT $\Sigma' \subset B^G$, and a linear cellular automaton $\tau \colon B^G \to A^G$ such that
$\Sigma = \tau(\Sigma')$. We shall refer to a finite subset $M \subset G$ containing both a memory set for $\Sigma'$ 
as well as a memory set for $\tau$ as to a \emph{memory set} for the  linear-sofic subshift $\Sigma$.
\par
Let $\Sigma \subset A^G$ be a linear-sofic subshift. 
Let $H\subset G$ be a subgroup of $G$ containing a memory set for $\Sigma$.  
Denote by $G/H \coloneqq \{gH: g \in G\}$ the set of all right cosets of $H$ in $G$.
As the right cosets of $H$ in $G$ form a partition of $G$,
we have a natural factorization
\[
A^G = \prod_{c \in G/H} A^c
\]
in which each $x \in A^G$ is identified with $(x\vert_c)_{c \in G/H} \in \prod_{c \in G/H} A^c$. 
The above factorization of $A^G$ induces a factorization (cf.~\cite[Lemma 2.8]{ccp-2020})
\[
\Sigma = \prod_{c \in G/H} \Sigma_c,
\]
where $\Sigma_c = \{x\vert_c: x \in \Sigma\}$ is a vector subspace of $A^c$ for all $c \in G/H$. 
\par 
Let $T \subset G$ be a complete set of representatives for the right cosets of $H$ in $G$ such that $1_G \in T$.
Then, for each $c \in G/H$, we have a linear uniform homeomorphism 
$\phi_c \colon \Sigma_c \to \Sigma_H$ given by
$\phi_c(y)(h) = y(gh)$ for all $y \in \Sigma_c$, where $g \in T$ represents $c$.
\par 
Now suppose in addition that $\tau \colon \Sigma \to \Sigma$ is a linear CA
which admits a memory set contained in $H$.
Then we have $\tau = \prod_{c \in G/H} \tau_c$,
where $\tau_c \colon \Sigma_c \to \Sigma_c$ is the linear map defined by setting
$\tau_c(y) \coloneqq \tau(x)\vert_c$ for all $y \in \Sigma_c$,
where $x \in \Sigma$ is any configuration extending $y$. 
Note that for each $c \in G/H$, the linear maps $\tau_c$ and $\tau_H$ are conjugated by $\phi_c$, i.e., we have 
$\tau_c = \phi_c^{-1} \circ \tau_H \circ \phi_c$.
This allows us to identify the action of $\tau_c$ on $\Sigma_c$
with that of the \emph{restriction cellular automaton} $\tau_H$ on $\Sigma_H$.

The following extends \cite[Theorem 2.1]{cc-TCS} (cf.~\cite[Lemma 2.10]{ccp-2020}).

\begin{lemma}
\label{l:restriction-cip}
Let $G$ be a group, let $A$ be a vector space over a field $K$, and let $\Sigma \subset A^G$ be a linear-sofic subshift.
Let $\tau \colon A^G \to A^G$ be a cellular automaton.
Let $H \subset G$ be a subgroup containing memory sets for both $\Sigma$ and $\tau$. 
Then $\tau(\Sigma)$ is closed in $A^G$ if and only if $\tau_H(\Sigma_H)$ is closed in $A^H$. 
\end{lemma}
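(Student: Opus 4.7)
The plan is to reduce the lemma to the coset-wise factorization that has just been set up in Subsection~\ref{s:restriction-ls}. Since $H$ contains a memory set for the linear-sofic subshift $\Sigma$, the paper gives the factorization $\Sigma = \prod_{c \in G/H} \Sigma_c$ inside $A^G = \prod_{c \in G/H} A^c$. Since $H$ also contains a memory set for $\tau$, the cellular automaton is diagonal with respect to this decomposition, $\tau = \prod_{c \in G/H} \tau_c$, and therefore
\[
\tau(\Sigma) = \prod_{c \in G/H} \tau_c(\Sigma_c).
\]

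Next, I would extend each $\phi_c$ to a uniform homeomorphism $\phi_c \colon A^c \to A^H$ by the same formula $\phi_c(y)(h) \coloneqq y(gh)$, where $g \in T$ is the chosen representative of $c$. The conjugation $\tau_c = \phi_c^{-1} \circ \tau_H \circ \phi_c$ already established on $\Sigma_c$ then gives $\phi_c(\tau_c(\Sigma_c)) = \tau_H(\Sigma_H)$, and since $\phi_c$ is a homeomorphism, $\tau_c(\Sigma_c)$ is closed in $A^c$ if and only if $\tau_H(\Sigma_H)$ is closed in $A^H$. Crucially, this criterion is the same for every coset $c$.

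The final step is the standard fact that a product $\prod_c Y_c$ of nonempty subsets $Y_c \subset A^c$ is closed in $\prod_c A^c = A^G$ (equipped with the prodiscrete topology) if and only if each $Y_c$ is closed in $A^c$. Nonemptiness is automatic here because $\Sigma$ contains the zero configuration, so each $\Sigma_c$, and hence $\tau_c(\Sigma_c)$, is nonempty. Combining with the previous paragraph, $\tau(\Sigma)$ is closed in $A^G$ if and only if each $\tau_c(\Sigma_c)$ is closed in $A^c$, if and only if $\tau_H(\Sigma_H)$ is closed in $A^H$, which is the stated equivalence.

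The step most open to a slip is the product-topology observation, because projections of closed sets need not be closed in general. The clean way around it is a net argument: assuming $\prod_c Y_c$ is closed and fixing once and for all base points $y_c^\star \in Y_c$ for $c \neq c_0$, any net in $Y_{c_0}$ converging to some $x \in \overline{Y_{c_0}}$ lifts to a net in $\prod_c Y_c$ by inserting the fixed $y_c^\star$ in the remaining coordinates; its limit lies in $\prod_c Y_c$ by closedness, forcing $x \in Y_{c_0}$. Everything else in the proof is bookkeeping around the factorizations recalled from Subsection~\ref{s:restriction-ls}.
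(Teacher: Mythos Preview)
Your proof is correct and follows essentially the same route as the paper: factor $\tau(\Sigma) = \prod_{c \in G/H} \tau_c(\Sigma_c)$, use the homeomorphisms $\phi_c \colon A^c \to A^H$ to identify each $\tau_c(\Sigma_c)$ with $\tau_H(\Sigma_H)$, and then appeal to the product-topology fact that a product of nonempty subsets is closed iff each factor is. You are simply more explicit than the paper about extending $\phi_c$ to all of $A^c$ and about the ``only if'' direction of the product fact, which the paper dismisses as immediate.
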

 
\begin{proof}
With the above notation, we have $\tau(\Sigma) = \prod_{c \in G/H} \tau_c(\Sigma_c)$. 
It is immediate that $\tau_H(\Sigma_H)$ is closed in $A^H$ if $\tau(\Sigma)$ is closed in $A^G$. 
Conversely, if $\tau_H(\Sigma_H)$ is closed in $A^H$, then 
so are $\tau_c(\Sigma_c) = \phi_c^{-1}(\tau_H(\Sigma_H))$ in $A^c$ for all $c \in G/H$, 
since the $\phi_c \colon A^c \to A^H$ are uniform homeomorphisms.  
Consequently, $\tau(\Sigma)$ is closed in $A^G$ whenever $\tau_H(\Sigma_H)$ is closed in $A^H$ 
since the product of closed subspaces is closed in the product topology. 
\end{proof}

\subsection{Nilpotent linear cellular automata}
Let $G$ be a group, let $A$ be a vector space over a field $K$, and let $\tau \colon \Sigma \to \Sigma$ be a linear cellular automaton, 
where $\Sigma \subset A^G$ is a linear subshift.
By linearity, $\tau$ is nilpotent if and only if there exists and integer $n_0 \geq 1$ such that $\tau^{n_0} = 0$.
Moreover, $\tau^n(\Sigma)$, $n \in \N$, and therefore $\Omega(\tau)$ are vector subspaces of $\Sigma$. 
\par
The following is the linear version of~\cite[Lemma 2.9]{ccp-2020}.

\begin{lemma}
\label{l:restriction-ls}
Let $G$ be a group, let $A$ be a vector space over a field $K$, and let $\Sigma \subset A^G$ be a linear-sofic subshift.
Let $\tau \colon \Sigma \to \Sigma$ be a linear cellular automaton.
Let $H \subset G$ be a subgroup containing memory sets of both $\Sigma$ and $\tau$. 
Then the following hold:
\begin{enumerate} [{\rm (i)}]
\item $\Omega(\tau) = \prod_{c \in G/H} \Omega(\tau_c)$;
\item $\Omega(\tau)$ is linearly uniformly homeomorphic to $\Omega(\tau_H)^{G/H}$;
\item
$\tau$ is nilpotent if and only if $\tau_H \colon \Sigma_H \to \Sigma_H$ is nilpotent.
\end{enumerate}
\end{lemma}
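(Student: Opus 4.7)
The plan is to exploit the product decompositions already established in Subsection~\ref{s:restriction-ls} just above the statement: since $H$ contains memory sets for both $\Sigma$ and $\tau$, one has $\Sigma = \prod_{c \in G/H} \Sigma_c$ and, crucially, the linear CA factorizes as $\tau = \prod_{c \in G/H} \tau_c$, with each $\tau_c \colon \Sigma_c \to \Sigma_c$ conjugated to $\tau_H$ by the linear uniform homeomorphism $\phi_c \colon \Sigma_c \to \Sigma_H$. The three assertions will all follow essentially by pushing these product decompositions through the iteration process.

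For (i), I would first observe that the product structure is preserved by iteration: $\tau^n = \prod_{c \in G/H} \tau_c^n$ and hence $\tau^n(\Sigma) = \prod_{c \in G/H} \tau_c^n(\Sigma_c)$ for every $n \geq 1$. Intersecting over $n$ commutes with arbitrary products of subsets, so
\[
\Omega(\tau) = \bigcap_{n \geq 1} \tau^n(\Sigma) = \bigcap_{n \geq 1} \prod_{c \in G/H} \tau_c^n(\Sigma_c) = \prod_{c \in G/H} \bigcap_{n \geq 1} \tau_c^n(\Sigma_c) = \prod_{c \in G/H} \Omega(\tau_c),
\]
which is (i).

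For (ii), I would use the conjugacy $\tau_c = \phi_c^{-1} \circ \tau_H \circ \phi_c$. Since $\phi_c$ is a linear uniform homeomorphism, it intertwines the iterates of $\tau_c$ and $\tau_H$, so $\phi_c(\Omega(\tau_c)) = \Omega(\tau_H)$ and the restriction of $\phi_c$ yields a linear uniform homeomorphism $\Omega(\tau_c) \to \Omega(\tau_H)$. Taking the product of these over $c \in G/H$ and combining with (i) gives the desired linear uniform homeomorphism $\Omega(\tau) \cong \Omega(\tau_H)^{G/H}$.

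For (iii), by $K$-linearity of $\tau$, nilpotency is equivalent to the existence of $n_0 \geq 1$ with $\tau^{n_0} = 0$, and similarly for $\tau_H$. Using $\tau^{n_0} = \prod_c \tau_c^{n_0}$, the condition $\tau^{n_0} = 0$ holds iff $\tau_c^{n_0} = 0$ for every $c \in G/H$, and via the conjugacy in (ii) this is equivalent to $\tau_H^{n_0} = 0$. No single step is really an obstacle here; the only mild care is to check that the factorization $\tau^n(\Sigma) = \prod_c \tau_c^n(\Sigma_c)$ and the commutation of $\bigcap_n$ with $\prod_c$ are valid, both of which are immediate once the coordinatewise action of $\tau$ on the decomposition has been set up in Subsection~\ref{s:restriction-ls}.
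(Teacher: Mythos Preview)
Your proposal is correct and follows essentially the same route as the paper's own proof: use the factorizations $\Sigma = \prod_c \Sigma_c$ and $\tau = \prod_c \tau_c$ from Subsection~\ref{s:restriction-ls}, iterate, commute $\bigcap_n$ with $\prod_c$ to get (i), apply the conjugacies $\phi_c$ (equivalently the product map $\phi = \prod_c \phi_c$) to obtain (ii), and deduce (iii) from the linear characterization of nilpotency together with the product/conjugacy structure. The only cosmetic difference is that the paper phrases (iii) in terms of $\tau^{n_0}(\Sigma) = \{0\}$ versus $\tau_H^{n_0}(\Sigma_H) = \{0\}$, but this is the same argument you give.
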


\begin{proof}
We have $\tau^n(\Sigma) = \prod_{c \in G/H} \tau_c^n(\Sigma)$ for all $n \in \N$, so that
\[
\Omega(\tau) = \bigcap_{n \in \N} \tau^n(\Sigma) = \bigcap_{n \in \N}\prod_{c \in G/H} \tau_c^n(\Sigma_c) =
\prod_{c \in G/H}  \bigcap_{n \in \N} \tau_c^n(\Sigma_c) = \prod_{c \in G/H} \Omega(\tau_c).
\]
This proves (i). It is then clear that $\phi \coloneqq \prod_{c \in G/H} \phi_c \colon A^G \to (A^H)^{G/H}$ yields, by restriction,
a linear uniform homeomorphism $\Omega(\tau) = \prod_{c \in G/H} \Omega(\tau_c) \to \Omega(\tau_H)^{G/H}$.
This proves (ii).  
\par
We have that $\tau$ is nilpotent if and only if there exists an integer $n_0 \geq 1$ such that $\tau^{n_0}(\Sigma) = \{0\}$.
By the above discussion, this is equivalent to $\tau_H^{n_0}(\Sigma_H) = \{0\}$, that is, to $\tau_H$ being nilpotent.
\end{proof}

\par
Given a group $G$ and a vector space $A$ over a field $K$, the set $\LCA(G,A)$ of all linear cellular automata $\tau \colon A^G \to A^G$
has a natural structure of a $K$-algebra (cf.\ \cite[Section 8.1]{book}). Indeed, it is a subalgebra of the $K$-algebra 
$\End_{K[G]}A^G$ (\cite[Proposition 8.7.1]{book}).
In \cite[Section 6]{cc-ETDS1} and \cite[Section 4]{cc-IJM} (see also \cite[Section 4]{cc-JA} and \cite[Corollary 8.7.8]{book}) 
it is shown that if $A$ is finite-dimensional with, say, $\dim_K(A) = d$, 
then, once fixed a vector basis $\BB$ for $A$, there exists a canonical $K$-algebra isomorphism $\tau \mapsto M_\BB(\tau)$ of 
$\LCA(G,A)$ onto $\Mat_d(K[G])$, the $K$-algebra of $d \times d$ matrices with coefficients in the group ring $K[G]$.
\par
Recall that an element $M$ of a ring $R$ is called \emph{nilpotent} if there exists an integer $n \geq 1$ such that $M^n = 0_R$.
We then have 

\begin{proposition}
\label{p:char-nilpotent-finite-lca}
Let $G$ be a group and let $A$ be a finite-dimensional vector space over a field $K$. 
Suppose that $\dim_K(A) = d$ and that $A$ is equipped with a basis $\BB$.
Let $\tau \colon A^G \to A^G$ be a linear CA.  
Then the following conditions are equivalent: 
\begin{enumerate}[\rm (a)]
\item
$\tau$ is nilpotent;
\item
the matrix $M_\BB(\tau) \in \Mat_d(K[G])$ is nilpotent.
\end{enumerate}
\end{proposition}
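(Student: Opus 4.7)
The plan is to leverage the canonical $K$-algebra isomorphism $\Phi \colon \LCA(G,A) \to \Mat_d(K[G])$ given by $\tau \mapsto M_\BB(\tau)$ (recalled just before the statement of Proposition~\ref{p:char-nilpotent-finite-lca}), together with the observation that, for a linear CA, nilpotency in the sense defined in the Introduction is equivalent to the existence of an integer $n_0 \geq 1$ with $\tau^{n_0} = 0$.

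First I would justify this reduction. Suppose $\tau \colon A^G \to A^G$ is a nilpotent linear CA with terminal point $x_0 \in A^G$, so that $\tau^{n_0}(x) = x_0$ for all $x \in A^G$ and some $n_0 \geq 1$. Since $\tau$ is $K$-linear we have $\tau^{n_0}(0) = 0$, hence $x_0 = 0$ and $\tau^{n_0}$ is the zero endomorphism of $A^G$. Conversely, if $\tau^{n_0} = 0$ as a $K$-linear map, then the constant map with value $0 \in A^G$ witnesses that $\tau$ is nilpotent in the general set-theoretic sense. Thus $\tau$ is nilpotent if and only if $\tau^{n_0} = 0$ in $\LCA(G,A)$ for some $n_0 \geq 1$.

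Next I would apply the isomorphism $\Phi$. Because $\Phi$ is a $K$-algebra isomorphism, it is multiplicative and sends the zero element $0 \in \LCA(G,A)$ to the zero matrix $0 \in \Mat_d(K[G])$. Hence, for every $n \geq 1$,
\[
M_\BB(\tau)^n = \Phi(\tau)^n = \Phi(\tau^n),
\]
so that $\tau^{n_0} = 0$ if and only if $M_\BB(\tau)^{n_0} = 0$. Combining this with the reduction of the preceding paragraph, $\tau$ is nilpotent in $\LCA(G,A)$ if and only if $M_\BB(\tau)$ is a nilpotent element of the ring $\Mat_d(K[G])$, which proves the equivalence of (a) and (b).

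There is essentially no obstacle here; the whole content is packaged inside the algebra isomorphism $\tau \mapsto M_\BB(\tau)$, already established in the cited references. The only subtlety worth flagging in the write-up is the passage from the set-theoretic definition of nilpotency (ending in a constant map) to the linear statement $\tau^{n_0} = 0$, which rests on the fact that a $K$-linear map fixes $0$.
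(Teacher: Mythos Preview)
Your proof is correct and follows essentially the same approach as the paper, which simply observes that nilpotency is preserved under the $K$-algebra isomorphism $\LCA(G,A) \cong \Mat_d(K[G])$. Your explicit justification that linear nilpotency reduces to $\tau^{n_0} = 0$ (because a linear map fixes $0$) is a welcome clarification of a point the paper states earlier without further comment.
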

\begin{proof}
The proof follows immediately from the fact that $\LCA(G,A)$ and $\Mat_d(K[G])$ are isomorphic as $K$-algebras, and that nilpotency is preserved under $K$-algebra homomorphisms.
\end{proof}

\section{Space-time inverse systems of linear cellular automata} 
\label{s:STIS}
\subsection{Inverse limits of sets}
\label{s:countably-pro-constructible}
Let $(I, \preceq)$ be a directed set, i.e., a partially ordered set in which every pair of elements admits an upper bound.
An \emph{inverse system} of sets \emph{indexed} by $I$ consists of the following data:
(1) a set $Z_i$ for each $i \in I$;
(2) a \emph{transition map} $\varphi_{ij} \colon Z_j \to Z_i$
for all $i,j \in I$ such that $i \preceq j$.
Furthermore, the transition maps must satisfy the following conditions:
\begin{align*}
 \varphi_{ii} &= \Id_{Z_i} \text{ (the identity map on $Z_i$) for all } i \in  I, \\[4pt]
 \varphi_{ij} \circ \varphi_{jk}  &= \varphi_{ik}  \text{ for all $i,j,k \in I$ such that } i \preceq j \preceq k.
\end{align*}
One then speaks of the inverse system $(Z_i,\varphi_{ij})$, or simply  $(Z_i)$ if
the index set and the transition maps  are clear from the context. One says that
an inverse system $(Z_i,\varphi_{ij})$ satisfies the \emph{Mittag-Leffler condition} provided
that for each $i \in I$ there exists $j \in I$ with $i \preceq j$ such that 
$\varphi_{i k} (X_k) = \varphi_{i j} (X_j)$ for all $j \preceq k$. 
\par
The \emph{inverse limit} of an inverse system $(Z_i,\varphi_{i j})$ is the subset
\[
\varprojlim_{i \in I} (Z_i,\varphi_{i j}) =  \varprojlim_{i \in I} Z_i \subset \prod_{i \in I} Z_i
\]
consisting of all  $(z_i)_{i \in I}$ such that $\varphi_{i j}(z_j)= z_i$ for all $i \preceq j$. 
\par
 
The following useful lemma is an application of the classical Mittag-Leffler lemma to affine inverse systems
(see, e.g.\ \cite[Section I.3]{grothendieck}, where Grothendieck used it in his study of the cohomology of affine schemes,
the general treatment by Bourbaki \cite[Theorem 1, TG II. Section 5]{bourbaki},
or \cite[Lemma 3.1]{cc-TCS} for a self-contained proof in the countable case. Cf.\ also \cite[Proposition 4.2]{phung-2018} and 
\cite[Lemma~9.15]{phung-2020} for more details).

\begin{lemma}
\label{l:inverse-limit-closed-im}
Let $K$ be a field.
Let $(X_i, f_{ij})$ be an inverse system indexed by an index set $I$, 
where  each $X_i$ is a nonempty finite-dimensional $K$-affine space and each transition map  
$f_{i j} \colon X_j \to X_i$ is a $K$-affine map for all $i \preceq j$. 
Then $\varprojlim_{i \in I} X_i \neq \varnothing$.
\end{lemma}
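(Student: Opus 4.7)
The plan is to verify that the given inverse system satisfies the Mittag-Leffler condition (thanks to finite-dimensionality of the $X_i$), reduce to the case of surjective transition maps, and then construct a thread either by recursion when $I$ admits a countable cofinal subset or by a Zorn's lemma argument in general.

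\textbf{Step 1 (Mittag-Leffler).} Fix $i \in I$. By the directedness of $I$ together with the compatibilities $f_{i\ell} = f_{ij} \circ f_{j\ell}$, the family $\{f_{ij}(X_j) : j \succeq i\}$ of affine subspaces of $X_i$ is filtered downward: for any $j,k \succeq i$ and any $\ell \succeq j,k$ one has $f_{i\ell}(X_\ell) \subset f_{ij}(X_j) \cap f_{ik}(X_k)$. Since $X_i$ is finite-dimensional, the dimensions of the $f_{ij}(X_j)$ attain a minimum at some $j_0(i) \succeq i$; as $f_{ij}(X_j) \subset f_{i j_0(i)}(X_{j_0(i)})$ whenever $j \succeq j_0(i)$, equality of dimensions forces equality of subspaces. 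Set $X_i^\infty \coloneqq f_{i j_0(i)}(X_{j_0(i)})$, so that $f_{ij}(X_j) = X_i^\infty$ for every $j \succeq j_0(i)$.

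\textbf{Step 2 (reduction to surjective transitions).} The restricted system $(X_i^\infty, f_{ij}|_{X_j^\infty})$ has nonempty components and surjective affine transition maps: for $i \preceq j$, choosing $k \succeq j_0(i), j_0(j)$ gives $f_{ij}(X_j^\infty) = f_{ij}(f_{jk}(X_k)) = f_{ik}(X_k) = X_i^\infty$. Since $\varprojlim_{i \in I} X_i^\infty \subset \varprojlim_{i \in I} X_i$, it suffices to produce a thread in the reduced system.

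\textbf{Step 3 (building a thread).} If $I$ admits a countable cofinal subset $i_1 \preceq i_2 \preceq \cdots$, pick any $x_1 \in X_{i_1}^\infty$ and lift recursively via surjectivity to $x_n \in X_{i_n}^\infty$ with $f_{i_{n-1} i_n}(x_n) = x_{n-1}$; for each $i \in I$, setting $y_i \coloneqq f_{i i_n}(x_n)$ for any $n$ with $i_n \succeq i$ defines a consistent thread. For general $I$, I would apply Zorn's lemma to the collection $\mathcal{F}$ of sub-systems $(Y_i)_{i \in I}$ with each $Y_i \subset X_i^\infty$ a nonempty affine subspace satisfying $f_{ij}(Y_j) = Y_i$, ordered by componentwise reverse inclusion. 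Finite-dimensionality guarantees that every chain stabilizes componentwise, furnishing an upper bound in $\mathcal{F}$, and a minimal element $(Z_i)$ then provides the desired thread once one shows that each $Z_i$ is a singleton. The main obstacle is this last step in the uncountable setting: if some $Z_{i_0}$ had positive dimension, one must construct from a chosen $z \in Z_{i_0}$ a strictly smaller sub-system $(W_i) \in \mathcal{F}$ by setting $W_i \coloneqq f_{i\tilde{i}}(Z_{\tilde{i}} \cap f_{i_0 \tilde{i}}^{-1}(z))$ for any $\tilde{i} \succeq i, i_0$, then verify that this pull-back is independent of $\tilde{i}$ (using surjectivity of the $f_{ab}|_{Z_b}$) and that the transitions between the $W_i$ remain surjective, so that $(W_i) \in \mathcal{F}$ with $W_{i_0} = \{z\} \subsetneq Z_{i_0}$ contradicts minimality.
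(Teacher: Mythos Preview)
The paper does not actually prove this lemma; it states the result and defers to the classical Mittag-Leffler theorem, citing Grothendieck (EGA~III), Bourbaki, and \cite{cc-TCS} for the countable case. Your proposal is precisely a self-contained rendering of that classical argument: verify Mittag-Leffler via the descending-chain condition on affine subspaces, pass to the universal images $X_i^\infty$ to obtain surjective transitions, and then build a thread.

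Your Steps~1--2 and the countable part of Step~3 are correct and complete. For the uncountable case your Zorn-based sketch is also correct, though you present it hesitantly. The two points you flag as obstacles do in fact go through: independence of $W_i$ from the choice of $\tilde{i}$ follows by taking a common upper bound $\tilde{i}\succeq\tilde{i}_1,\tilde{i}_2$ and using that $f_{\tilde{i}_1\tilde{i}}$ restricts to a surjection $Z_{\tilde{i}}\cap f_{i_0\tilde{i}}^{-1}(z)\to Z_{\tilde{i}_1}\cap f_{i_0\tilde{i}_1}^{-1}(z)$ (any lift in $Z_{\tilde{i}}$ of a point over $z$ automatically still maps to $z$ via $i_0$); surjectivity of the transitions on $(W_i)$ then follows by computing $W_i$ and $W_j$ from the same $\tilde{i}\succeq j,i_0$. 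With these checks filled in, your argument is complete and matches the approach the paper cites.
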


\subsection{Space-time inverse systems}  
\label{s:space-time-system} 

Let $G$ be a group and let $A$ be a vector space over a field $K$. 
Let $\Sigma \subset A^G$ be a linear subshift and let $\tau \colon \Sigma \to \Sigma$ be a linear CA.
Let $\widetilde{\tau} \colon A^G \to A^G$ be a linear CA extending $\tau$ and let $M\subset G$ be a memory set for $\widetilde{\tau}$.
Since every finite subset of $G$ containing a memory set for $\widetilde{\tau}$ is itself a memory set for $\widetilde{\tau}$,
we can choose $M$ such that $1_G \in M$ and $M = M^{-1}$.
\par
Let ${\mathcal P}^*(G)$ denote the set of all finite subsets of $G$ containing $1_G$ equipped with the ordering
given by inclusion. Also equip $\N$ with the natural ordering.
Equip $I \coloneqq {\mathcal P}^*(G) \times \N$ with the product ordering $\preceq$. Thus, given $\Omega, \Omega' \in {\mathcal P}^*(G)$
and $n,n' \in \N$, we have $(\Omega,n) \preceq (\Omega',n')$ if and only if $\Omega \subset \Omega'$ and $n \leq n'$.
It is clear that $(I, \preceq)$ is directed.
\par
We construct an inverse system $(\Sigma_{\Omega,n})_{(\Omega,n) \in I}$ indexed by $I$ as follows.
\par
Firstly, given $(\Omega,n) \in I$, we set 
\[
\Sigma_{\Omega,n} \coloneqq \Sigma_{\Omega M^n} = \{x\vert_{\Omega M^n} : x \in \Sigma \} \subset A^{\Omega M^n}.
\]

To define the transition maps
$\Sigma_{\Omega',n'} \to \Sigma_{\Omega,n}$ ($(\Omega,n) \preceq (\Omega',n')$) 
of the inverse system $(\Sigma_{\Omega,n})_{(\Omega,n) \in I}$,
it is clearly enough to define, for all $\Omega, \Omega'\in {\mathcal P}^*(G)$ and $n,n' \in \N$,  
with $\Omega\subset \Omega'$ and $n \leq n'$, the \emph{horizontal} transition map 
$p_{\Omega, \Omega'; n} \colon \Sigma_{\Omega', n}  \to \Sigma_{\Omega, n}$,
the \emph{vertical} transition map $q_{\Omega; n, n'} \colon \Sigma_{\Omega, n'} \to \Sigma_{\Omega,n}$,
and verify that the diagram 
\[
\begin{tikzcd}
\Sigma_{\Omega,n'} \ \arrow[d, swap, "q_{\Omega; n,n'}"]  
&  \ \Sigma_{\Omega',n'}  \arrow[l, swap,  "p_{\Omega, \Omega'; n'}"]  \arrow[d, "q_{\Omega'; n,n'}"]   \\ 
\Sigma_{\Omega,n}  \  &  \ \Sigma_{\Omega', n} \arrow[l, "p_{\Omega, \Omega'; n}"],   
\end{tikzcd}
\] 
is commutative, i.e.,
\begin{equation}
\label{e:p-q-commute}
q_{\Omega;n,n'} \circ p_{\Omega, \Omega';n'} = p_{\Omega, \Omega'; n} \circ q_{\Omega';n,n'}
\end{equation}
for all $\Omega, \Omega' \in {\mathcal P}^*(G)$ and $n,n' \in \N$, with $\Omega\subset \Omega'$ and $n \leq n'$.

We define $p_{\Omega, \Omega';n}$ as being the linear map obtained by restriction to $\Omega M^n \subset \Omega' M^n$.
Thus, for all $\sigma \in \Sigma_{\Omega',n} = \Sigma_{\Omega' M^n}$, we have 
\begin{equation}
\label{e:def-p-ij}
p_{\Omega, \Omega';n}(\sigma) = \sigma\vert_{\Omega M^n}. 
\end{equation} 
We now define $q_{\Omega; n,n'}$. If $n = n'$, then 
$q_{\Omega; n,n'} = q_{\Omega; n,n} \coloneqq \Id_{\Omega M^n} \colon \Omega M^n \to \Omega M^n$, is the identity map.
Suppose now that $n+1 \leq n'$. We first observe that, given $x \in \Sigma$ and $g \in G$,
it follows from~\eqref{e;local-property} applied to $\widetilde{\tau}$ that
$\tau^{n'-n}(x)(g)$ only depends on the restriction of $x$ to $gM^{n'-n}$.
As $gM^{n'-n} \subset \Omega M^n M^{n'-n} = \Omega M^{n'}$ for all $g \in \Omega M^n$,
we deduce from this observation that, given $\sigma \in \Sigma_{\Omega, n'} = \Sigma_{\Omega M^{n'}}$ and $x \in \Sigma$ 
extending $\sigma$, the formula
\begin{equation}
\label{e:def-q-ij}
q_{\Omega; n, n'}(\sigma) \coloneqq \tau^{n'-n}(x)\vert_{\Omega M^n} 
\end{equation}
yields a well-defined element $q_{\Omega; n, n'}(\sigma) \in \Sigma_{\Omega M^n} = \Sigma_{\Omega, n}$, and hence a linear map
$q_{\Omega; n,n'} \colon \Sigma_{\Omega, n'} \to \Sigma_{\Omega, n}$.

\begin{definition}
The inverse system $(\Sigma_{\Omega,n})_{(\Omega,n) \in I}$ is called the \emph{space-time inverse system} 
associated with the triple $(\Sigma, \tau, M)$. 
\end{definition} 

When $G$ is countable, we can simplify the above construction by slightly modifying the definitions therein.
Since $G$ is countable, we can find a sequence $(M_n)_{n \in \N}$ of finite subsets of $G$ such that
\begin{enumerate}[{\rm (M-1)}]
\item $M_0 = \{1_G\}$ and $M_1 = M$ (the memory set for $\widetilde{\tau}$), 
\item $M_iM_j \subset M_{i+j}$ for all $i,j \in \N$, 
\item $\bigcup_{n \in \N} M_n = G$.
\end{enumerate}
For instance, if $G$ is finitely generated and $M$ in addition generates $G$, then one may take $M_n \coloneqq M^n$ for all $n \in \N$.
We equip $\N^2$ with the product ordering $\preceq$, that is, given $i,j,k,l \in \N$, we have  $(i,j) \preceq (k,l)$ if and only if $i \leq k$ and $j \leq l$. We then construct an inverse system $(\Sigma_{i j})_{i,j \in \N}$ indexed by the directed set $(\N^2, \preceq)$ by setting
\[
\Sigma_{i j} \coloneqq \Sigma_{M_{i+j}} = \{x\vert_{M_{i  + j}} : x \in \Sigma \} \subset A^{M_{i + j}}
\]
and defining, for all $i,j \in \N$, the \emph{unit-horizontal} transition map $p_{i j} \colon \Sigma_{i+1, j}  \to \Sigma_{i j}$
as being the linear map obtained by restriction to $M_{i + j} \subset M_{i + j + 1}$,
and the \emph{unit-vertical} transition map $q_{i j} \coloneqq q_{M_i;M_j;M_{j+1}} \colon \Sigma_{i,j+1} \to \Sigma_{i j}$ by setting
$q_{i j}(\sigma) \coloneqq (\tau(x))\vert_{M_{i + j}}$ for all $\sigma \in \Sigma_{i,j + 1}$ and $x \in \Sigma$ extending $\sigma$
(as in the general case, this gives a well-defined element $q_{i j}(\sigma) \in \Sigma_{i j}$).
Finally, as for \eqref{e:p-q-commute}, one checks that $q_{i j} \circ p_{i,j+1} = p_{i j} \circ q_{i+1,j}$ for all $i,j \in \N$. 

\begin{definition}
\label{d:space-time-system-2}  
The inverse system $(\Sigma_{ij})_{i,j \in \N}$ is called the \emph{space-time inverse system} 
associated with the triple $(\Sigma, \tau, (M_n)_{n \in \N})$. 
\end{definition}

\subsection{Space-time-systems and limit sets}
We keep the assumptions and notation from the above subsection.
Let us fix $n \in \N$. Then, in our space-time inverse system we get an \emph{horizontal} inverse system 
$(\Sigma_{\Omega,n})_{\Omega \in {\mathcal P}^*(G)}$ indexed by ${\mathcal P}^*(G)$
whose transition maps are the restriction maps 
$p_{\Omega, \Omega';n} \colon \Sigma_{\Omega' M^n} \to \Sigma_{\Omega M^n}$, 
$\Omega, \Omega' \in {\mathcal P}^*(G)$ such that $\Omega \subset \Omega'$.
Note that the horizontal inverse system satisfies the Mittag-Leffler condition and that in fact, as it
immediately follows from the closedness of $\Sigma$ in $A^G$ and the fact that $G M^n = G$, one has that the~limit
\begin{equation}
\label{e:row-limit}
\Sigma_n \coloneqq \varprojlim_{\Omega \in {\mathcal P}^*(G)} \Sigma_{\Omega,n} 
\end{equation}
can be identified with $\Sigma$ in a canonical way.
\par
Moreover, the linear maps $q_{\Omega; n,n'} \colon \Sigma_{\Omega,n'} \to \Sigma_{\Omega,n}$, for $\Omega \in {\mathcal P}^*(G)$,
define an inverse system linear morphism from
the inverse system $(\Sigma_{\Omega,n'})_{\Omega \in {\mathcal P}^*(G)}$ to the inverse system 
$(\Sigma_{\Omega,n})_{\Omega \in {\mathcal P}^*(G)}$.
This yields a linear limit map $\tau_{n,n'} \colon \Sigma_{n'} \to \Sigma_n$.
Using the identifications $\Sigma_n  = \Sigma_{n'} = \Sigma$, we have $\tau_{n,n'} = \tau^{n'-n}$ 
for all $n,n' \in \N$ such that $n \leq n'$.
We deduce that the limit
\begin{equation}
\label{e:backward-orbits}
\varprojlim_{(\Omega, n) \in I} \Sigma_{\Omega, n} = \varprojlim_{n \in \N} \Sigma_n
\end{equation}
is the set of \emph{backward orbits} of $\tau$, that is, 
the set consisting of all sequences $(x_n)_{n \in \N}$ such that $x_n \in \Sigma$ and 
$x_n = \tau(x_{n+1})$ for all $n \in \N$.
Each such a sequence satisfies that $x_0 = \tau^n(x_n)$ for all $n \in \N$, and hence $x_0 \in \Omega(\tau)$.
This determines a canonical linear map 
\[
\Phi \colon \varprojlim_{(\Omega, n) \in I} \Sigma_{\Omega, n} \to \Omega(\tau).
\] 
 
\begin{proposition} 
\label{p:phi-surg}
Let $G$ be a group and let $A$ be a finite dimensional vector space over a field $K$. 
Let $\Sigma \subset A^G$ be a linear subshift and let $\tau \colon \Sigma \to \Sigma$ be a linear CA.
Let $M \subset G$ be a memory set for $\tau$ which is symmetric and contains $1_G$, 
and consider the space-time inverse system associated with the triple $(\Sigma,\tau, M)$. 
Then the canonical map $\Phi$ is surjective.
\end{proposition}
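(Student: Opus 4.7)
The plan is to fix an arbitrary element $a \in \Omega(\tau)$ and produce a backward orbit $(x_n)_{n \in \N}$ of $\tau$ in $\Sigma$ with $x_0 = a$; via the identification in \eqref{e:backward-orbits}, such a backward orbit is precisely an element of $\varprojlim_{(\Omega,n) \in I} \Sigma_{\Omega,n}$ mapping to $a$ under $\Phi$. The mechanism is an application of Lemma \ref{l:inverse-limit-closed-im} to an affine subsystem of $(\Sigma_{\Omega,n})$ that records partial preimages of $a$.

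Concretely, for every $(\Omega,n) \in I$ I introduce
\[
X_{\Omega,n} \coloneqq q_{\Omega;0,n}^{-1}\bigl(a\vert_{\Omega}\bigr) \subset \Sigma_{\Omega,n},
\]
the fiber over $a\vert_\Omega$ of the $K$-linear map $q_{\Omega;0,n} \colon \Sigma_{\Omega,n} \to \Sigma_{\Omega,0} = \Sigma_\Omega$. Since $a \in \Omega(\tau) = \bigcap_{n \geq 1} \tau^n(\Sigma)$, there exists $z \in \Sigma$ with $\tau^n(z) = a$, whence $z\vert_{\Omega M^n} \in X_{\Omega,n}$. Because $A$ is finite-dimensional and $\Omega M^n$ is finite, $A^{\Omega M^n}$ is a finite-dimensional $K$-vector space, so $X_{\Omega,n}$, being a nonempty fiber of a $K$-linear map, is a nonempty finite-dimensional $K$-affine subspace of it.

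Next I verify that the horizontal maps $p_{\Omega,\Omega';n}$ and the vertical maps $q_{\Omega;n,n'}$ restrict to well-defined transition maps on the family $(X_{\Omega,n})$. The horizontal case uses the commutative square \eqref{e:p-q-commute} (applied with the lower index $n$ replaced by $0$) together with the observation that $p_{\Omega,\Omega';0}(a\vert_{\Omega'}) = a\vert_\Omega$. The vertical case uses the transitivity identity $q_{\Omega;0,n} \circ q_{\Omega;n,n'} = q_{\Omega;0,n'}$ built into the inverse-system structure, so that any $\sigma \in X_{\Omega,n'}$ is sent by $q_{\Omega;n,n'}$ into $X_{\Omega,n}$. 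Since these restrictions inherit $K$-affineness from their ambient $K$-linear maps, the family $(X_{\Omega,n})_{(\Omega,n) \in I}$ meets the hypotheses of Lemma \ref{l:inverse-limit-closed-im}, and therefore $\varprojlim_{(\Omega,n)} X_{\Omega,n} \neq \varnothing$.

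Finally, any element $(\sigma_{\Omega,n}) \in \varprojlim_{(\Omega,n)} X_{\Omega,n} \subset \varprojlim_{(\Omega,n)} \Sigma_{\Omega,n}$ yields, via \eqref{e:row-limit} and \eqref{e:backward-orbits}, a sequence $(x_n)_{n \in \N}$ in $\Sigma$ with $\tau(x_{n+1}) = x_n$ and $x_n\vert_{\Omega M^n} = \sigma_{\Omega,n}$ for all $\Omega$ and $n$. The condition $\sigma_{\Omega,0} \in X_{\Omega,0}$ forces $\sigma_{\Omega,0} = a\vert_\Omega$ for every $\Omega \in {\mathcal P}^*(G)$, hence $x_0 = a$ and $\Phi((x_n)_n) = a$. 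The only genuinely nonroutine step is the restriction-compatibility verification of the preceding paragraph; once it is in place, the finite-dimensional Mittag-Leffler content of Lemma \ref{l:inverse-limit-closed-im} supplies the rest essentially for free.
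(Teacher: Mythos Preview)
Your proof is correct and follows essentially the same approach as the paper: your fibers $X_{\Omega,n} = q_{\Omega;0,n}^{-1}(a\vert_\Omega)$ coincide with the paper's sets $B_{\Omega,n} = (q_{\Omega;0,1}\circ\cdots\circ q_{\Omega;n-1,n})^{-1}(y_0\vert_\Omega)$, and both proofs then apply Lemma~\ref{l:inverse-limit-closed-im} to this inverse subsystem of nonempty finite-dimensional affine spaces. You are somewhat more explicit than the paper in checking that the transition maps restrict to the subsystem, but the argument is the same.
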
 

\begin{proof}
Let $y_0 \in \Omega(\tau) \subset \Sigma$. 
For every $\Omega \in {\mathcal P}^*(G)$ and $n \in \N$, define a finite dimensional affine subspace $B_{\Omega, n} \subset A^{\Omega M^n}$ by setting
\[
B_{\Omega, n}  \coloneqq \left(q_{\Omega;0,1} \circ q_{\Omega;1,2} \circ \dots \circ q_{\Omega; n-1,n}\right)^{-1} (y_0\vert_\Omega) \subset \Sigma_{\Omega M^n}. 
\] 
By definition of $\Omega(\tau)$, for every $n \in \N$ there exists 
an element $y_n \in \Sigma$ such that $\tau^n (y_n)=y_0$. 
Hence, it follows from the definition of the transition maps 
$q_{\Omega; j-1,j}$ and of $\Sigma_{\Omega, j}$, $j \in \N$, that $y_n\vert_{\Omega M^n} \in B_{\Omega,n}$. 
In particular, $B_{\Omega,n} \neq \varnothing$ for every $\Omega \in {\mathcal P}^*(G)$ and $n \in \N$. 
By restricting the transition maps of the space-time inverse system $(\Sigma_{\Omega, n})_{(\Omega, n) \in I}$ 
to the sets $B_{\Omega, n}$, we obtain a well-defined inverse subsystem 
$(B_{\Omega, n})_{(\Omega, n) \in I}$ of finite dimensional affine spaces with affine transition maps.
By Lemma~\ref{l:inverse-limit-closed-im}, we can find   
\[
x  \in \varprojlim_{(\Omega, n) \in I} B_{\Omega, n}  \subset \varprojlim_{(\Omega, n) \in I} \Sigma_{\Omega, n}. 
\] 
It is clear from the constructions of the inverse system $(B_{\Omega, n})_{(\Omega, n) \in I}$ 
and of the map $\Phi$ that $\Phi(x)= y_0$. 
This shows that $\Phi$ is surjective. 
\end{proof}

\section[The closed-image-property]{The closed-image-property for linear cellular automata}

Using the space-time inverse system, we give a short proof of the 
following result extending \cite[Theorem 1.4]{cc-TCS} (see also \cite[Theorem 8.8.1]{book}).

\begin{theorem}
\label{t:closed-image}
Let $G$ be a group and let $A$ be a finite-dimensional vector space over a field $K$.  
Let $\Sigma \subset A^G$ be a linear subshift and let $\tau \colon A^G \to A^G$ be a linear CA. 
Then $\tau(\Sigma)$ is a linear subshift of $A^G$.  
\end{theorem}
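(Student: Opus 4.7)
The statement asserts three things: $\tau(\Sigma)$ is a vector subspace of $A^G$, it is $G$-invariant, and it is closed in the prodiscrete topology. The first is immediate because $\tau$ is $K$-linear and $\Sigma$ is a vector subspace; the second follows from the $G$-equivariance of every cellular automaton applied to the $G$-invariant set $\Sigma$. So the entire content of the theorem is the closed-image property, and this is what I would prove using the space-time inverse system of Section \ref{s:space-time-system}, truncated to a single time step.

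Fix a symmetric memory set $M \subset G$ for $\tau$ with $1_G \in M$. For any $\Omega \in \mathcal{P}^*(G)$, the local structure of $\tau$ makes $\tau(z)\vert_\Omega$ depend only on $z\vert_{\Omega M}$, so there is a well-defined $K$-linear map $q_\Omega \colon \Sigma_{\Omega M} \to A^\Omega$ sending $\sigma$ to $\tau(\widetilde{\sigma})\vert_\Omega$ for any $\widetilde{\sigma} \in \Sigma$ extending $\sigma$; this is exactly the transition map $q_{\Omega;0,1}$ from \eqref{e:def-q-ij}. Given $y \in \overline{\tau(\Sigma)}$, I set
\[
B_\Omega \coloneqq q_\Omega^{-1}(y\vert_\Omega) \subset \Sigma_{\Omega M} \subset A^{\Omega M}.
\]
Because $y$ lies in the prodiscrete closure of $\tau(\Sigma)$, for every $\Omega$ there is some $x_\Omega \in \Sigma$ with $\tau(x_\Omega)\vert_\Omega = y\vert_\Omega$, so $x_\Omega\vert_{\Omega M} \in B_\Omega$ and $B_\Omega \neq \varnothing$. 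Moreover $B_\Omega$ is a finite-dimensional $K$-affine subspace of $A^{\Omega M}$, since $A$ is finite-dimensional and $\Omega M$ is finite. For $\Omega \subset \Omega'$, the horizontal restriction map $p_{\Omega,\Omega';1}$ of \eqref{e:def-p-ij} carries $B_{\Omega'}$ into $B_\Omega$, yielding an inverse system of nonempty finite-dimensional affine spaces with affine transition maps, indexed by $\mathcal{P}^*(G)$.

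Lemma \ref{l:inverse-limit-closed-im} applies directly and produces a family $(\sigma_\Omega) \in \varprojlim_{\Omega} B_\Omega$. The compatibility condition $\sigma_{\Omega'}\vert_{\Omega M} = \sigma_\Omega$ for $\Omega \subset \Omega'$, together with $1_G \in M$, lets me glue these restrictions into a single configuration $x \in A^G$ with $x\vert_{\Omega M} = \sigma_\Omega$ for every $\Omega$. Since each $\sigma_\Omega$ lies in $\Sigma_{\Omega M}$ and $\Sigma$ is closed in $A^G$, the configuration $x$ lies in $\Sigma$. Finally, for every $g \in G$ I pick $\Omega \in \mathcal{P}^*(G)$ containing $g$ and compute $\tau(x)(g) = q_\Omega(x\vert_{\Omega M})(g) = q_\Omega(\sigma_\Omega)(g) = y(g)$, so $\tau(x) = y$ and $y \in \tau(\Sigma)$.

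The only non-formal ingredient is the nonemptiness of the inverse limit, and this is precisely what the affine Mittag-Leffler statement in Lemma \ref{l:inverse-limit-closed-im} delivers; this is where the finite-dimensionality of $A$ is essential, since without it the $B_\Omega$ need not be finite-dimensional and the limit could be empty (cf.\ the counterexample in Subsection~\ref{s:CIP-examples}). Everything else is bookkeeping with the space-time system already built in Section \ref{s:space-time-system}, so I do not expect any further obstacle.
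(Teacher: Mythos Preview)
Your proof is correct and is essentially identical to the paper's own argument: both take a point $y$ in the closure of $\tau(\Sigma)$, form the inverse system of nonempty finite-dimensional affine fibers $q_{\Omega;0,1}^{-1}(y\vert_\Omega)\cap\Sigma_{\Omega M}$ indexed by $\mathcal{P}^*(G)$, apply Lemma~\ref{l:inverse-limit-closed-im}, and identify an element of the inverse limit with a preimage of $y$ in $\Sigma$. The only cosmetic difference is that the paper invokes the identification $\varprojlim_\Omega \Sigma_{\Omega,1}=\Sigma$ from~\eqref{e:row-limit} directly, whereas you spell out the gluing and use closedness of $\Sigma$ explicitly.
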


\begin{proof} 
Since the cellular automaton $\tau$ is linear and $G$-equivariant, its image $\tau(\Sigma)$ is a $G$-invariant vector subspace of $A^G$.
We thus only need to show that $\tau(\Sigma)$ is closed in $A^G$.
Let $M \subset G$ be a memory set for $\tau$ which is symmetric and contains $1_G$, 
and consider the space-time inverse system associated with the triple $(\Sigma, \tau, M)$ 
as in Section~\ref{s:space-time-system}.
\par
Suppose that $x \in \Sigma$ belongs to the closure of $\tau(\Sigma)$. We must show that $x \in \tau(\Sigma)$. 
\par 
For every $\Omega \in {\mathcal P}^*(G)$, define an affine subspace $Z_\Omega \subset A^{\Omega M}$ by setting
\[ 
Z_\Omega \coloneqq (q_{\Omega;0,1})^{-1}(x\vert_\Omega) \cap \Sigma_{\Omega,1}.
\] 

Since $x$ belongs to the closure of $\tau(\Sigma)$, it follows that $Z_\Omega \neq \varnothing$ for all $\Omega \in {\mathcal P}^*(G)$. 
By restricting the projections $p_{\Omega, \Omega';1} \colon A^{\Omega'M} \to A^{\Omega M}$ (cf.\ \eqref{e:def-p-ij},
$\Omega, \Omega' \in  {\mathcal P}^*(G)$, with $\Omega \subset \Omega'$) to the $Z_\Omega$'s, we obtain affine maps $\pi_{\Omega, \Omega'} \colon Z_{\Omega'} \to Z_{\Omega}$ 
of the inverse system $(Z_\Omega)_{\Omega \in {\mathcal P}^*(G)}$. 
It then follows from Lemma \ref{l:inverse-limit-closed-im} that $\varprojlim_{\Omega \in {\mathcal P}^*(G)} (Z_\Omega,\pi_{\Omega, \Omega'}) \neq \varnothing$. 
Therefore, by construction of $Z_\Omega$ and $\Sigma_{\Omega,1}$, for every $c \in \varprojlim_{\Omega \in {\mathcal P}^*(G)}Z_\Omega \subset \varprojlim_{\Omega \in {\mathcal P}^*(G)} \Sigma_{\Omega,1} = \Sigma$ (cf.~\eqref{e:row-limit}) we have $\tau(c) = x$.     
This shows that $\tau(\Sigma)$ is closed.
\end{proof}

\begin{remark}
We observe that the the hypothesis of finite-dimensionality of the vector space $A$ in Theorem \ref{t:closed-image} cannot be dropped, as
the example in Section \ref{s:CIP-examples} below shows.
\end{remark}

\section{Proofs}
\label{s:proofs}

\subsection*{Proof of Theorem \ref{t:LSFT-DCC}}
Suppose $\Sigma$ is of finite type. Hence $\Sigma = \Sigma(D,W) \subset A^D$, where $D \subset G$ is finite and
$W \subset A^D$ is a vector subspace (cf.\ Lemma \ref{l:W-subspace}).
Let $\Sigma_0 \supset \Sigma_1 \supset \cdots$ be a decreasing sequence of linear subshifts of $A^G$
such that $\bigcap_{n \geq 0} \Sigma_n = \Sigma$. Let  $(M_n)_{n \in \N}$
be a sequence of finite subsets of $G$ satisfying conditions (M-1)-(M-3) and such that $D \subset M_1$.
Consider the inverse system $(X_{ij})_{i,j \in \N}$ defined by setting
$X_{ij} \coloneqq (\Sigma_j)_{M_i} \subset A^{M_i}$. Observe that $X_{i,j+1} \subset X_{ij}$ since $\Sigma_{j+1} \subset \Sigma_j$
for all $i,j \in \N$. Also, we define the transition maps $p_{ij} \colon X_{i+1,j} \to X_{ij}$ by setting
$p_{ij}(x) \coloneqq x\vert_{M_i}$ for all $x \in X_{i+1,j} = (\Sigma_j)_{M_{i+1}}$ and
$q_{ij} \colon X_{i,j+1} \to X_{ij}$ as the inclusion maps.
\par
The decreasing sequence $(X_{1,j})_{j \in \N}$ of finite-dimensional vector spaces eventually stabilizes so that there exists $j_0 \geq 1$ such that $X_{1,j} = X_{1,j_0}$ for all $j \geq j_0$. Set $W' \coloneqq X_{1,j_0}$ and let us show that $\Sigma$ equals the linear SFT
$\Sigma' \coloneqq \Sigma(M_{1}, W')$.
First note that $\Sigma_{j_0} \subset \Sigma'$ so that $\Sigma \subset \Sigma'$.
Conversely, let $w \in W'$.  
We construct an inverse subsystem $(Z_{ij})_{i \geq 1, j \geq 0}$ of $(X_{ij})_{i \geq 1, j \geq 0}$ as follows.
For $i \geq 1$ and $j \geq 0$, consider the affine subspace of $X_{ij}$:  
\[
Z_{ij} \coloneqq   \{x \in X_{ij}: x\vert_{M_1} = w\} \subset X_{ij}.  
\]
The transition maps of $(Z_{ij})_{i \geq 1, j  \geq 0}$ are well-defined as the restrictions of the transition maps of
the system $(X_{ij})_{i \geq 1, j \geq 0}$.  
\par
By our construction, each $Z_{ij}$ is clearly nonempty.
Hence, Lemma~\ref{l:inverse-limit-closed-im} implies that
there exists $x = (x_{ij})_{i \geq 1, j \geq 0} \in \varprojlim Z_{ij}$.
Let $y \in A^G$ be defined by $y(g)= x_{i0}(g)$ for every $g \in G$ and
any large enough $i\geq 1$ such that $g \in M_{i}$.
Observe that $x_{ij} = x_{ik}$ for every $i \geq 1$ and $0 \leq j \leq k$  
since the vertical transition maps $X_{ik} \to X_{ij}$ are simply inclusions.
Consequently, for every $n \in \N$, we have $y \in \Sigma_n$ by \eqref{e:row-limit}.
Hence $y \in \Sigma$.
By construction, $y\vert_{M_{1}} = w$.
Since $w$ was arbitrary, this shows that $W' \subset \Sigma_{M_{1}}$.
Hence, $\Sigma' = \Sigma(M_{1}, W') \subset \Sigma(M_{1}, \Sigma_{M_{1}}) = \Sigma$.
The last equality follows from Lemma~\ref{l:base-change-sft} as $D \subset M_{1}$.  
Therefore, $\Sigma' = \Sigma$ and  
$\Sigma_n = \Sigma$ for all $n \geq j_0$. This proves the implication (a) $\implies$ (b). 
\par
Suppose now that $\Sigma \subset A^G$ is a linear subshift which is not of finite type. 
Let $(M_n)_{n \in \N}$ be a sequence of finite subsets of $G$ satisfying conditions (M-2)-(M-3). 
For every $n \in \N$, set $W_{n} \coloneqq \Sigma_{M_n}$ (as in Section~\ref{s:space-time-system}). 
Then, $W_n$ is a vector subspace of $A^{M_n}$. 
For every $n \in \N$ we consider the linear SFT $\Sigma_n \coloneqq \Sigma(M_n, W_n)$.
As $(\Sigma_{M_{n+1}})_{M_n} = \Sigma_{M_n}$, 
it is clear that $\Sigma \subset \Sigma_{n+1} \subset \Sigma_n$ for all $n \in \N$. 
We claim that $\Sigma = \bigcap_{n \in \N} \Sigma_n$. 
We only need to prove that $\bigcap_{n \in \N} \Sigma_n \subset \Sigma$. 
Let $x \in \bigcap_{n \in \N} \Sigma_n$. Then by definition of $\Sigma_n$, 
we find that $x\vert_{M_n} \in W_n = \Sigma_{M_n}$ for every $n \in \N$. 
Thus, since $\Sigma$ is closed, $x \in \varprojlim_{n \in \N} \Sigma_{M_n} = \Sigma$ 
(cf.~\eqref{e:row-limit}) and hence $\bigcap_{n \in \N} \Sigma_n \subset \Sigma$. 
However, the decreasing sequence $(\Sigma_n)_{n \in \N}$ cannot stabilize since, otherwise, the subshift  
$\Sigma$ would be of finite type. 
This shows that (b)$\implies$(a). 
The proof of Theorem \ref{t:LSFT-DCC} is complete.  \hfill $\Box$

\subsection*{Proof of Corollary \ref{c:DCC-FT}}
Suppose first that $A^G$ satisfies condition (b) and let $\Sigma \subset A^G$ be a linear subshift. 
Let $(D_n)_{n \in \N}$ be an increasing sequence of finite subsets of $G$ such that $\bigcup_{n \in \N} D_n = G$.
For every $n \in \N$ let $W_n \coloneqq \Sigma_{D_n} \subset A^{D_n}$. Then $\Sigma_n \coloneqq \Sigma(D_n,W_n) \subset A^G$ is a 
linear SFT and $\Sigma_0 \supset \Sigma_1 \supset \cdots \Sigma_n \supset \Sigma_{n+1} \cdots$ 
for all $n \in \N$. We claim that $\bigcap_{n \in \N} \Sigma_n = \Sigma$. 
Since $\Sigma_n \supset \Sigma$ for all $n \in \N$, we only need to show that $\bigcap_{n \in \N} \Sigma_n  \subset \Sigma$. 
Let $x \in \bigcap_{n \in \N} \Sigma_n$. 
This means that for each $n \in \N$ there exists $x_n \in \Sigma$ such that $x\vert_{D_n} = x_n\vert_{D_n}$. 
Since the sequence $(D_n)_{n \in \N}$ is exhausting and $\Sigma$ is closed in the prodiscrete topology, we deduce that $x \in \Sigma$.
This proves the claim.
Since $A^G$ satisfies condition (b), there exists $n_0 \in \N$ such that $\Sigma_n = \Sigma_{n_0}$
for all $n \geq n_0$. We deduce that $\Sigma = \Sigma_{n_0}$ is of finite type.
\par
Conversely, suppose that every linear subshift $\Sigma \subset A^G$ is of finite type and
let $(\Sigma_n)_{n \in \N}$ be a decreasing sequence of linear subshifts. 
Set $\Sigma \coloneqq \bigcap_{n \in \N} \Sigma_n \subset A^G$. 
Then $\Sigma$ is a linear subshift and, by our assumptions, it is of finite type. 
It follows from Theorem \ref{t:LSFT-DCC} that the sequence $(\Sigma_n)_{n \in \N}$ eventually stabilizes.
The proof of Corollary \ref{c:DCC-FT} is complete. \hfill $\Box$

\subsection*{Proof of Theorem \ref{t:noether}}
We first observe that if $G$ is uncountable then, on the one hand $G$ is not finitely generated and thus is not Noetherian 
(that is, it does not satisfy the maximal condition on subgroups) and therefore the group algebra $K[G]$ is not one-sided Noetherian 
(cf.\ \cite[Lemma 2.2, Chapter 10]{passman}), and, on the other hand, $G$ is not of $K$-linear Markov type, since the linear subshift consisting of all constant configurations in $K^G$ is not of finite type (cf.\ Corollary \ref{c:constant-conf-SFT}).

Thus, in order to prove Theorem \ref{t:noether}, it is not restrictive to assume that $G$ is countable.

Recall that $\LCA(G,A)$ denotes the $K$-algebra of all linear cellular automata $\tau \colon A^G \to A^G$
(cf.\ \cite[Section 8.1]{book}).

The evaluation map $(\tau, x) \mapsto \tau(x)$, where $\tau \in \LCA(G,A)$ and $x \in A^G$, yields a $K$-bilinear map 
$\LCA(G,A) \times A^G \to A^G$.
\par
Given a subset $\Gamma$ in $\LCA(G,A)$, set
\begin{equation}
\label{e:ideal-perp}
\Gamma^\perp \coloneqq \bigcap_{\tau \in \Gamma} \ker(\tau) \subset A^G.
\end{equation}
\par
Since every map $\tau \in \LCA(G,A)$ is linear, continuous, and $G$-equivariant, we deduce immediately that its kernel
$\ker(\tau)$ is a linear subshift of $A^G$. Moreover, since the set of all linear subshifts in $A^G$ is closed under
intersections, we have that $\Gamma^\perp$ is a linear subshift of $A^G$.
\par
Given a subset $\Sigma \subset A^G$, set
\begin{equation}
\label{e:sigma-perp}
\Sigma^\perp \coloneqq
\{\tau \in \LCA(G,A): \Sigma \subset \ker(\tau)\} \subset \LCA(G,A).
\end{equation}
\par
We claim that $\Sigma^\perp$ is a left ideal in $\LCA(G,A)$. First of all, we clearly have $0 \in \Sigma^\perp$,
since $\Sigma \subset A^G = \ker(0)$. Suppose that $\tau_1, \tau_2 \in \Sigma^\perp$.
Then $(\tau_1-\tau_2)(x) = \tau_1(x) - \tau_2(x) = 0-0 = 0$ for all $x \in \Sigma$, showing that $\tau_1 - \tau_2 \in \Sigma^\perp$.
Finally, if $\tau \in \LCA(G,A)$, we have $(\tau \circ \tau_1)(x) = \tau(\tau_1(x)) = \tau(0) = 0$ for all $x \in \Sigma$, 
showing that $\tau \circ \tau_1 \in \Sigma^\perp$.
This proves the claim.
\par
We note also that if $\Sigma_1, \Sigma_2 \subset A^G$, then 
\begin{equation}
\label{e:inclusions}
\Sigma_1 \subset \Sigma_2 \ \implies \ \Sigma_2^\perp \subset \Sigma_1^\perp.
\end{equation}

We have the following key lemmata:

\begin{lemma}
\label{l:LMT}
Let $G$ be a group, let $A$ be a vector space over a field $K$, and let $\Sigma \subset A^G$ be a linear subshift. Then
\begin{equation}
\label{e:involution}
(\Sigma^\perp)^\perp = \Sigma.
\end{equation}
\end{lemma}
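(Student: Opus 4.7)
The plan is to prove the two inclusions $\Sigma \subset (\Sigma^\perp)^\perp$ and $(\Sigma^\perp)^\perp \subset \Sigma$ separately. The first inclusion is immediate from the definitions: if $x \in \Sigma$ and $\tau \in \Sigma^\perp$, then $\tau(x) = 0$ by \eqref{e:sigma-perp}, so $x \in \ker(\tau)$ for every $\tau \in \Sigma^\perp$, and hence $x \in (\Sigma^\perp)^\perp$ by \eqref{e:ideal-perp}.

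For the non-trivial inclusion, I would argue by contraposition: given $x \in A^G \setminus \Sigma$, the goal is to produce a linear cellular automaton $\tau \in \LCA(G,A)$ with $\Sigma \subset \ker(\tau)$ but $\tau(x) \neq 0$. Since $\Sigma$ is closed in the prodiscrete topology on $A^G$, there exists a finite subset $F \subset G$ such that $x\vert_F \notin \Sigma_F$. Note that $\Sigma_F$ is a vector subspace of $A^F$ (being the image of $\Sigma$ under the linear restriction map).

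Next, I would separate $x\vert_F$ from $\Sigma_F$ by a linear functional. By extending a basis of $\Sigma_F$ to a basis of $A^F$, one obtains a $K$-linear map $\mu_0 \colon A^F \to K$ such that $\mu_0\vert_{\Sigma_F} = 0$ and $\mu_0(x\vert_F) \neq 0$. Assuming without loss of generality that $A \neq \{0\}$ (the trivial case is immediate), pick $a_0 \in A \setminus \{0\}$ and set $\mu \colon A^F \to A$, $\mu(y) \coloneqq \mu_0(y) \, a_0$. Then $\mu$ is $K$-linear, vanishes on $\Sigma_F$, and satisfies $\mu(x\vert_F) \neq 0_A$. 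Let $\tau \colon A^G \to A^G$ be the linear CA with memory set $F$ and local defining map $\mu$.

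It remains to check that $\tau$ has the desired properties. For any $y \in \Sigma$ and $g \in G$, the $G$-invariance of $\Sigma$ gives $g^{-1}y \in \Sigma$, whence $(g^{-1}y)\vert_F \in \Sigma_F$ and consequently $\tau(y)(g) = \mu((g^{-1}y)\vert_F) = 0_A$; thus $\Sigma \subset \ker(\tau)$, i.e., $\tau \in \Sigma^\perp$. On the other hand, $\tau(x)(1_G) = \mu(x\vert_F) \neq 0_A$, so $x \notin \ker(\tau)$ and therefore $x \notin (\Sigma^\perp)^\perp$. The only delicate point is the separation step, which relies on the fact that subspaces of arbitrary vector spaces admit linear-functional separation via a basis extension argument; this is not really an obstacle, so the proof should go through cleanly for any vector space $A$ over $K$.
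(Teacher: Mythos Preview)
Your proof is correct and follows essentially the same approach as the paper's: both argue the nontrivial inclusion by contraposition, using closedness of $\Sigma$ to find a finite window $F$ where $x\vert_F \notin \Sigma_F$, and then building a linear CA from a local defining map that separates $x\vert_F$ from $\Sigma_F$. The only cosmetic difference is that you factor the separating map through a scalar functional $\mu_0 \colon A^F \to K$ before re-embedding into $A$, whereas the paper directly invokes the existence of $\mu \colon A^\Omega \to A$; you are also more explicit in verifying $\Sigma \subset \ker(\tau)$ via $G$-invariance, which the paper leaves implicit.
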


\begin{proof}
It trivially follows from the definitions that $\Sigma \subset (\Sigma^\perp)^\perp$.
In order to show the other inclusion, let $x \in A^G \setminus \Sigma$ and let us show that $x \notin (\Sigma^\perp)^\perp$.
Since $\Sigma$ is closed, by the definition of prodiscrete topology we can find
a finite subset $\Omega \subset G$ such that $x\vert_\Omega \notin \Sigma_\Omega$.
It is a classical and easy argument in Linear Algebra that there exists a linear map
$\mu \colon A^\Omega \to A$ such that $\mu\vert_{\Sigma_\Omega} \equiv 0$,
that is, $\Sigma_\Omega \subset \ker(\mu)$, and $\mu(x\vert_\Omega) \neq 0$.
It is then clear that the linear CA $\tau$ with memory set $\Omega$ and local defining map $\mu$ satisfies that 
$\Sigma \subset \ker(\tau)$, that is, $\tau \in \Sigma^\perp$, but $\tau(x) \neq 0$. 
Thus $x \notin (\Sigma^\perp)^\perp$.
\end{proof}

In the proof of the following lemma, we explicitly use the $K$-algebra isomorphism $\Mat_d(K[G]) \cong \LCA(G,K^d)$ we alluded to above
(cf.\ \cite[Corollary 8.7.8]{book}) for $d=1$. This is given by associating with each $\alpha \in K[G]$ the linear cellular automaton 
$\tau_\alpha \colon K^G \to K^G$
with memory set $M_\alpha \coloneqq \{g \in G: \alpha(g) \neq 0\}$, the support of $\alpha$, and local defining map
$\mu_\alpha \colon K^{M_\alpha} \to K$ defined by setting $\mu_\alpha(y) \coloneqq \sum_{h \in M_\alpha}\alpha(h)y(h)$ for all
$y \in K^{M_\alpha}$. 
\par
We shall also make use of the following notation.
Given $\alpha \in K[G]$, for every finite subset $E \subset G$ such that $M_\alpha \subset E$ we define the linear map 
$\mu_{\alpha, E} \colon K^E \to K$ by setting $\mu_{\alpha, E} \coloneqq \mu_\alpha \circ \pi_{M_\alpha, E}$, 
where $\pi_{M_\alpha, E} \colon K^E \to K^{M_\alpha}$ is the projection map induced by the inclusion $M_\alpha \subset E$. 
Note that $\mu_{\alpha, E}$ is the local defining map of $\tau_{\alpha}$ associated with the memory set $E$. 

\begin{lemma} 
\label{l:markov-hilbert} 
Let $G$ be a countable group and let $K$ be a field. Let $\Gamma \subset K[G]$ be a left ideal. 
Suppose that $\Gamma^\perp \subset K^G$ is a linear SFT. 
Then $\Gamma$ is a finitely generated left ideal. 
\end{lemma}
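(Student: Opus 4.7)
The plan is to extract an explicit finite list of generators for $\Gamma$ from the SFT data for $\Gamma^\perp$, and then to verify via a duality argument that this list actually generates $\Gamma$.

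First, invoking Lemma~\ref{l:W-subspace}, I would write $\Gamma^\perp = \Sigma(D, W)$ for a finite $D \subset G$ and a vector subspace $W \subset K^D$. Since $K^D$ is finite-dimensional, $W$ is the common zero set of finitely many linear functionals $\mu_1, \dots, \mu_r \colon K^D \to K$, and each $\mu_i$ has the form $\mu_{\beta_i, D}$ for a unique $\beta_i \in K[G]$ supported in $D$. Set
\[
\Gamma_0 \coloneqq \sum_{i = 1}^{r} K[G] \beta_i \subset K[G],
\]
a finitely generated left ideal; the goal then becomes $\Gamma_0 = \Gamma$. The easy half is the identity $\Gamma_0^\perp = \Gamma^\perp$ in $K^G$, which is a direct calculation: $\Gamma_0^\perp = \bigcap_i \ker(\tau_{\beta_i}) = \bigcap_i \Sigma(D, \ker \mu_i) = \Sigma(D, W) = \Gamma^\perp$.

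The heart of the proof is a duality statement: for every left ideal $\Lambda \subset K[G]$ one has $(\Lambda^\perp)^\perp = \Lambda$, where the inner $\perp$ lands in $K^G$ and the outer one returns to $\LCA(G, K) \cong K[G]$. Granting this, $\Gamma_0 = (\Gamma_0^\perp)^\perp = (\Gamma^\perp)^\perp = \Gamma$ is immediate. To prove the duality I would introduce the natural pairing $\langle \beta, x \rangle \coloneqq \sum_{g \in G} \beta(g) x(g)$ between $K[G]$ and $K^G$, verify by direct computation that $\tau_\beta(x) = 0$ is equivalent to $\langle g \beta, x \rangle = 0$ for every $g \in G$, and deduce that $\ker(\tau_\beta) = (K[G] \beta)^\circ$ in this pairing. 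For a left ideal $\Lambda$ this identifies $\Lambda^\perp$ with the pairing annihilator $\Lambda^\circ$, and analogously $(\Lambda^\perp)^\perp = \Lambda^{\circ \circ}$. The proof is then closed by the standard linear-algebraic identity $A^{\circ \circ} = A$ for any subspace $A \subset K[G]$: given $\beta \notin A$, pick a linear functional $\phi \colon K[G] \to K$ vanishing on $A$ with $\phi(\beta) \neq 0$ and read it off, via the canonical identification of $K[G]^*$ with $K^G$, as some $x \in K^G$; then $x \in A^\circ$ but $\langle \beta, x \rangle = \phi(\beta) \neq 0$.

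The main obstacle is precisely this duality step. The subtle point is that the pointwise vanishing $\tau_\beta(x) = 0$ looks a priori much stronger than the single linear equation $\langle \beta, x \rangle = 0$; the key observation is that $\tau_\beta(x) = 0$ in fact packages together exactly the $G$-orbit of the latter under left translation in $K[G]$, which is what makes the CA annihilator of a left ideal coincide with its pairing annihilator and reduces the whole statement to the elementary double-annihilator identity between $K[G]$ and $K^G$.
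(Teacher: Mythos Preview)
Your argument is correct and takes a genuinely different route from the paper. The paper does not prove the ideal-side duality $(\Lambda^\perp)^\perp=\Lambda$ at all; instead it exhausts $G$ by finite sets $E_n$, lets $\Gamma_n$ be the ideal generated by elements of $\Gamma$ supported in $E_n$, observes $\Gamma^\perp=\bigcap_n \Gamma_n^\perp$, and then invokes Theorem~\ref{t:LSFT-DCC} (the DCC characterization of linear SFTs) to force $\Gamma_{n_0}^\perp=\Gamma^\perp$ for some $n_0$. The equality $\Gamma=\Gamma_{n_0}$ is then obtained by a Mittag-Leffler/inverse-limit construction: assuming some $\alpha\in\Gamma\setminus\Gamma_{n_0}$, one builds a configuration in $\Gamma_{n_0}^\perp$ on which $\tau_\alpha$ does not vanish, contradicting $\Gamma_{n_0}^\perp=\Gamma^\perp$. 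Your approach bypasses both Theorem~\ref{t:LSFT-DCC} and the inverse-limit machinery entirely: you read off generators directly from the defining window $(D,W)$, and the sole nontrivial step is the clean double-annihilator identity for subspaces of $K[G]$ under the pairing with $K^G\cong K[G]^{*}$. This is more elementary and more conceptual, and it yields as a byproduct the companion to the paper's Lemma~\ref{l:LMT} (which is the shift-side identity $(\Sigma^\perp)^\perp=\Sigma$). The paper's approach, on the other hand, stays within the inverse-system framework used throughout and illustrates how the SFT/DCC equivalence does work in practice; it also makes explicit that the generating set can be taken to consist of elements of $\Gamma$ with support in a single finite window, whereas in your argument the $\beta_i$ are produced externally from $W$ and only a posteriori shown to lie in $\Gamma$.
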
 

\begin{proof} 
Since $G$ is countable, we can find an increasing sequence $(E_n)_{n \in \N}$ of finite subsets of $G$ such that 
$G = \bigcup_{n \in \N} E_n$. For every $n \in  \N$, let $\Gamma_n \subset \Gamma$ be the ideal of $K[G]$ generated 
by the elements of $\Gamma$ whose supports are contained in $E_n$. Then $\Gamma_n \subset \Gamma_{n+1}$ for all $n \in \N$
and $\Gamma = \bigcup_{n \in  \N} \Gamma_n$. We thus obtain a decreasing sequence $(\Gamma_n^\perp)_{n \in \N}$ of 
linear subshifts of $K^G$. 
\par
Remark that we can write 
\[
\Gamma^\perp = \bigcap_{\alpha \in \Gamma} \Ker(\tau_\alpha) = \bigcap_{n \in \N} \bigcap_{\alpha \in \Gamma_n} \Ker(\tau_\alpha) = \bigcap_{n \in \N} \Gamma_n^\perp.  
\]
\par 
Since, by hypothesis, the linear subshift $\Gamma^\perp \subset K^G$ is of finite type, we deduce from Theorem~\ref{t:LSFT-DCC} 
that there exists $n_0  \in  \N$ such that $\Gamma_{n}^\perp = \Gamma_{n_0}^\perp$ for every $n \geq n_0$, equivalently,
$\Gamma^\perp = \Gamma_{n_0}^\perp$. \\
\par 

\noindent
{\bf Claim.} $\Gamma = \Gamma_{n_0}$. 
\begin{proof}[Proof of the claim]
Set $J \coloneqq \Gamma_{n_0}$ and suppose by contradiction that there exists $\alpha \in \Gamma \setminus J$.   
Let $m_0 \in \N$ be such that $E_{m_0}$ contains the support $M_\alpha \subset G$ of $\alpha$. 
\par
For every $m \in \N$, we set $V_m \coloneqq K^{E_m}$ and denote by $V_m^*$ the dual $K$-vector space of $V_m$. 
Given a vector subspace $W_m \subset V_m$ (resp.\ $J_m \subset V_m^*$) we set $W_m^\perp \coloneqq \{v^* \in V_m^*: W_m \subset \ker(v^*)\} \subset V_m^*$ (resp.\ $J_m^\perp \coloneqq \bigcap_{v^* \in J_m} \ker(v^*) \subset V_m$). 
Since $V_m$ is finite-dimensional, we have $(J_m^\perp)^\perp = J_m$.

We then denote by $J_m \subset J$ the subset containing all elements of $J$ whose supports are contained in $E_m$. 
Observe that $J_m \subset J_{m+1}$ and $J = \bigcup_{m \geq m_0} J_m$. 
We regard $J_m$ as a linear subspace of $V_m^*$ via the map $\beta \mapsto \mu_{\beta, E_m}$.
This way, setting $W_m \coloneqq \bigcap_{\beta \in J_m} \Ker(\mu_{\beta, E_m}) \subset V_m$, 
we have $W_m = J_m^\perp$ and therefore
\begin{equation}
\label{e:duality}
\{v^* \in V_m^*: W_m \subset \ker(v^*)\} = W_m^\perp = (J_m^\perp)^\perp = J_m.
\end{equation}

From this we deduce that for every $m \geq m_0$ 
\[
U_m \coloneqq W_m \setminus \Ker(\mu_{\alpha, E_m}) \neq \varnothing.
\]
Indeed, otherwise, we would have $W_m \subset \Ker(\mu_{\alpha, E_m})$ 
so that, by \eqref{e:duality}, $\alpha \in J_m  \subset J$, a contradiction since $\alpha \notin J$. 
\par 
For every $m \geq n \geq m_0$, let $\pi_{nm} \colon K^{E_m} \to K^{E_n}$ be the projection map induced by the inclusion $E_n \subset E_m$. 
It is clear that $\pi_{nm}(U_m) \subset U_n$ since 
$\Ker(\mu_{\alpha, E_m}) = \Ker(\mu_\alpha) \times K^{E_m \setminus M_\alpha} \subset K^{E_m}$ and 
$\pi_{nm}(W_m) \subset W_n$ for all $m \geq n \geq m_0$. 
Therefore, we obtain an inverse system $(U_m)_{m \geq m_0}$ of nonempty sets with transition maps 
$\varphi_{nm} \coloneqq \pi_{nm}\vert_{U_m} \colon U_m \to U_n$ for $m \geq n \geq m_0$. 
\par 
As in Lemma~\ref{l:inverse-limit-closed-im}, an immediate application of the Mittag-Leffler condition to the inverse system 
$(U_m)_{m \geq m_0}$ shows that there exists a configuration $c \in \varprojlim_{m \geq m_0} U_m \subset \varprojlim_{m \geq m_0} W_m$.
Let us show that $c \in J^\perp = \bigcap_{\beta \in J}\ker(\tau_\beta) \subset K^G$.
Let $\beta \in J$ and let $g \in G$. Since $J$ is an ideal of $K[G]$ and
$J = \bigcup_{m \geq m_0} J_m$, there exists $m \geq m_0$ such that $g\beta \in J_m$.
Since $c\vert_{E_m} \in W_m$, it follows from the definition of $W_m$ that
\[
\tau_\beta(c)(g) = \mu_{\beta, E_m}((g^{-1}c)\vert_{E_m})
= \mu_{g\beta, E_m}(c\vert_{E_m}) = 0.
\]
\par
Since $g \in G$ was arbitrary, this shows that $\tau_\beta(c) = 0$. Since $\beta \in J$ was arbitrary, this shows that $c \in J^\perp$.
On the other hand, by construction, we have that $\mu_\alpha(c\vert_{M_\alpha}) \neq 0$ so that $\tau_\alpha(c) \neq 0$. 
Since $\alpha \in J$, we deduce that $c \notin J^\perp$, a contradiction.
The claim is proved. 
\end{proof}

We are now in a position to show that $\Gamma$ is finitely generated as a left ideal. With the above notation,
$J_{n_0}$, the subset consisting of all elements in $J = \Gamma$ whose supports are contained in $E_{n_0}$
is a vector subspace of $V_{n_0} = K^{E_{n_0}}$, and therefore is finite dimensional. 
It is then clear that any vector basis of $J_{n_0}$ also generates $\Gamma_{n_0} = \Gamma$ as a left ideal. 
We conclude that $\Gamma$ is a finitely generated left ideal of $K[G]$.  
\end{proof}

We are now in a position to prove Theorem \ref{t:noether}.

Recall that we assume that $G$ is countable.
Suppose first that the group algebra $K[G]$ is one-sided Noetherian. 
Let $A$ be a finite-dimensional vector space over $K$ and let $d = \dim_K(A)$.
We then observe that since $K[G]$ is one-sided Noetherian, so is the finitely generated left $K[G]$-module $\Mat_d(K[G])$, 
the $K$-algebra of $d \times d$ matrices with coefficients in the group ring $K[G]$.
Since every left ideal in $\Mat_d(K[G])$ is trivially a left $K[G]$-module, we deduce that $\Mat_d(K[G])$ is
one-sided Noetherian as well as a ring.
As mentioned above (cf.\ \cite[Corollary 8.7.8]{book}), once fixed a vector basis for $A$, 
there exists a canonical $K$-algebra isomorphism of $\LCA(G,A)$ onto $\Mat_d(K[G])$.
We deduce that $\LCA(G,A)$ is one-sided Noetherian.
\par
In order to show that $G$ is of $K$-linear Markov type, let $(\Sigma_n)_{n \in \N}$ be a decreasing sequence of linear subshifts in $A^G$
and let us show that it stabilizes.  
Setting $\Gamma_n \coloneqq \Sigma_n^\perp$ for all $n \in \N$, we get an increasing sequence $(\Gamma_n)_{n \in \N}$ of
left ideals in $\LCA(G,A)$. 
Since the latter is left-Noetherian, such a sequence stabilizes, that is, there
exists $n_0 \in \N$ such that $\Gamma_n = \Gamma_{n_0}$ for all $n \geq n_0$. It then follows from Lemma \ref{l:LMT} that
$\Sigma_n = \Gamma_n^\perp =  \Gamma_{n_0}^\perp = \Sigma_{n_0}$ for all $n \geq n_0$, that is, $(\Sigma_n)_{n \in \N}$
stabilizes. This shows that $G$ is of $K$-linear Markov type.
 \par 
Conversely, suppose that $G$ is of $K$-linear Markov type and let $\Gamma \subset K[G]$ be a left ideal. 
Then the linear subshift $\Gamma^\perp \subset K^G$ is of finite type. 
Lemma~\ref{l:markov-hilbert} implies that $\Gamma$ is finitely generated. 
This shows that the group algebra $K[G]$ is one-sided Noetherian. 

The proof of Theorem \ref{t:noether} is complete. \hfill $\Box$

\subsection*{Proof of Corollary \ref{c:LMT}}
Let $G$ be a polycyclic-by-finite group and let $K$ be a field.
It follows from a famous result of P.\ Hall \cite{hall} (see also \cite[Corollary 10.2.8]{passman}) 
that $K[G]$ is one-sided Noetherian. We then deduce from Theorem \ref{t:noether} that $G$ is of $K$-linear Markov type. \hfill $\Box$
\vspace{0.3cm}

\noindent
{\it Remark.}
(1) At our knowledge, it is not known whether or not there exist groups $G$, other than the polycyclic-by-finite groups, 
whose group algebra $K[G]$ is one-sided Noetherian. See Section \ref{ss:GLMT} for more on this.
\par
\noindent
(2) An alternative and self-contained proof of Corollary \ref{c:LMT} is obtained from Lemma \ref{l:finite} and Lemma
\ref{l:cyclic} below combined with an easy induction argument. For the details see Remark \ref{r:alternative}.

\subsection*{Proof of Theorem \ref{t:limit-set}} 
(i) It follows from Theorem \ref{t:closed-image} that $\tau^n(\Sigma)$ is a linear subshift in $A^G$ for all $n \in \N$.
Since the intersection of any family of linear subshifts is itself a linear subshift, we deduce that 
$\Omega(\tau) = \bigcap_{n \in \N} \tau^n(\Sigma)$ is a linear subshift. 
\par
(ii) Let $x \in \Omega(\tau)$, that is, $x \in \tau^n(\Sigma)$ for every $n \geq 0$.  
Thus $\tau(x) \in \tau^{n+1}(\Sigma)$ for every $n \geq 0$ and it follows that $\tau(x) \in \Omega(\tau)$. 
Therefore, $\tau(\Omega(\tau)) \subset \Omega(\tau)$. 
For the converse inclusion, let $y \in \Omega(\tau)$. Let also $M \subset G$ be a memory set for $\tau$ such that
$1_G \in M$ and $M = M^{-1}$. Then, by Proposition~\ref{p:phi-surg}, 
there exists $x \in \varprojlim_{(\Omega,n) \in I} \Sigma_{\Omega,n}$ 
such that $\Phi(x)=y$. 
On the other hand, \eqref{e:backward-orbits} tells us that $\Phi^{-1}(y) \subset 
\varprojlim_{(\Omega,n) \in I} \Sigma_{\Omega,n}$ is the set of backward orbits of $y$ under $\tau$. 
Hence, we can find $z \in \Omega(\tau)$ such that $\tau(z)= y$. Thus,  
$\Omega(\tau) \subset \tau(\Omega(\tau))$ and equality follows. 
\par
(iii) As already mentioned in the Introduction, the inclusions $\Per(\tau) \subset \Rec(\tau) \subset \NW(\tau)$
are immediate from the definitions.
In \cite[Proposition 2.2]{ccp-2020} it is shown that if
$X$ is a uniform space and $f \colon X \to X$ is a continuous map, then $\NW(f) \subset \Crec(f)$.
Since every cellular automaton is continuous, we deuce that $\NW(\tau) \subset \Crec(\tau)$.
In \cite[Proposition 2.3]{ccp-2020} it is shown that if
$X$ is a Hausdorff uniform space and $f \colon X \to X$ is a uniformly continuous map such that
$f^n(X)$ is closed in $X$ for all $n \in \N$, then $\Crec(f) \subset \Omega(f)$.
In our setting, uniform continuity of $\tau$ is a general property of cellular automata 
already mentioned in the Introduction. Moreover, $\tau^n(\Sigma)$ is closed in $\Sigma$ for all $n \in \N$ by
Theorem \ref{t:closed-image}. 
We deduce the last inclusion, namely $\Crec(\tau) \subset \Omega(\tau)$.
\par
(iv)
Suppose that $\Omega(\tau)$ is of finite type. It follows from Theorem \ref{t:LSFT-DCC} that the sequence
$(\tau^n(\Sigma))_{n \in \N}$ eventually stabilizes, that is, there exists $n_0 \geq 1$ such that $\tau^n(\Sigma) = \tau^{n_0}(\Sigma)$
for all $n \geq n_0$. This shows that $\tau$ is stable.
\par
(v) Suppose that $\Omega(\tau)$ is finite-dimensional. It follows from Proposition \ref{p:FD-SFT} that $\Omega(\tau)$ is of finite type.
Using (iv) we deduce that $\tau$ is stable.

This ends the proof of Theorem \ref{t:limit-set}. \hfill $\Box$

\subsection*{Proof of Corollary \ref{c:limit-set}}
We only need to prove the statements for $G$ not finitely generated.
Let $M \subset G$ be a finite subset serving as a memory set for both $\Sigma$ and $\tau$, and denote by 
$H \subset G$ the subgroup generated by $M$.
\par
The proof of Theorem \ref{t:limit-set}.(ii) did not use any finite generation assumption on $G$ and therefore holds true 
in the present setting as well.
\par
(i) It follows from Theorem \ref{t:limit-set}.(i) applied to the restriction cellular automaton $\tau_H \colon \Sigma_H \to \Sigma_H$ that
$\Omega(\tau_H)$ is a linear subshift. As a consequence, $\Omega(\tau_c)$ are closed in $A^c$ for all $c \in G/H$. As products of closed
subspaces are closed in the product topology, we deduce from Lemma \ref{l:restriction-ls}.(i) that 
$\Omega(\tau) = \prod_{c \in G/H} \Omega(\tau_c)$ is also closed in $A^G$. Since $\Omega(\tau)$ is a $K$-linear and $G$-invariant
subset of $A^G$, we conclude that it is a linear subshift of $A^G$.
\par
(ii) It follows from Theorem \ref{t:limit-set}.(ii) applied to the restriction cellular automaton $\tau_H \colon \Sigma_H \to \Sigma_H$
that $\tau_H(\Omega(\tau_H)) = \Omega(\tau_H)$. As a consequence, $\tau_c(\Omega(\tau_c)) = \Omega(\tau_c)$ for all $c \in G/H$. 
We deduce from Lemma \ref{l:restriction-ls}.(i) that 
$\tau(\Omega(\tau)) = \prod_{c \in G/H} \tau_c(\Omega(\tau_c)) = \prod_{c \in G/H} \Omega(\tau_c) = \Omega(\tau)$.
\par
(iii) Just note that, by virtue of Theorem \ref{t:closed-image}, $\tau^n_H(\Sigma_H)$ is closed in $A^H$ for all $n \in \N$.
Hence, by Lemma \ref{l:restriction-cip}, $\tau^n(\Sigma)$ is closed in $A^G$ for all $n \in \N$, and
the proof of Theorem \ref{t:limit-set}.(iii) applies verbatim.
\par
(iv) Up to enlarging $M \subset G$, if necessary, we may suppose that $M$ also serves as a memory set for the SFT $\Omega(\tau)$,
say $\Omega(\tau) = \Sigma(M, W) \subset A^G$ for some $W \subset A^M$.
We have that $\Omega(\tau)_H = \Omega(\tau_H) = \Sigma(M, W) \subset A^H$ is of finite type as well.
It then follows from Theorem \ref{t:limit-set}.(iv) applied to the restriction cellular automaton $\tau_H$, that $\tau_H$ is stable.
Since stability is invariant under the operation of restriction, we deduce that $\tau$ is itself stable.
\par
(v) If $\Omega(\tau)$ is finite-dimensional, so is $\Omega(\tau_H) = \Omega(\tau)_H$. 
It then follows from Theorem \ref{t:limit-set}.(v) applied to the restriction cellular automaton $\tau_H$, that $\tau_H$ is stable.
Thus $\tau$ is itself stable. 

This proves Corollary \ref{c:limit-set}. \hfill $\Box$

\subsection*{Proof of Corollary \ref{c:other-characterization}}
We first observe that every polycyclic-by-finite group is amenable (see, for instance \cite[Chapter 4]{book}).
Let $\Lambda \subset A^G$ be a strongly irreducible subshift such that $\Lambda \subset \Sigma$, $\tau(\Lambda) \subset \Lambda$, and
such that the restriction linear CA $\tau\vert_\Lambda \colon \Lambda \to \Lambda$ is pre-injective.
Since $G$ is polycyclic-by-finite, Corollary~\ref{c:LMT} ensures that $\Lambda$ is a linear subshift of finite type.
Since $G$ is amenable, the implication pre-injectivity $\implies$ surjectivity in the Garden of Eden theorem 
\cite[Theorem~1.2]{cc-goe-sft} yields the equality $\tau(\Lambda) = \Lambda$. 
It follows immediately that $\Lambda \subset \Omega(\tau)$.
\par
Theorem~\ref{t:limit-set}.(i) and Corollary~\ref{c:LMT} imply that $\Omega(\tau)$ is a linear subshift of finite type.
Thus, by Theorem~\ref{t:limit-set}.(iv), $\tau$ is stable and therefore 
there exists an integer $n \geq 1$ such that $\tau^n(\Sigma) = \Omega(\tau)$. 
Since the image of a strongly irreducible subshift under a CA is also strongly irreducible, it follows that
$\Omega(\tau)$ is a strongly irreducible linear SFT.
By Theorem~\ref{t:limit-set}.(ii), $\tau(\Omega(\tau)) \subset \Omega(\tau)$ and the restriction linear CA
$\tau\vert_{\Omega(\tau)} \colon \Omega(\tau) \to \Omega(\tau)$ is surjective.
We can thus conclude from the implication surjectivity $\implies$ pre-injectivity in the Garden of Eden theorem 
\cite[Theorem~1.2]{cc-goe-sft} that $\tau\vert_{\Omega(\tau)}$ is pre-injective. 

The proof of Corollary \ref{c:other-characterization} is complete.  \hfill $\Box$

\subsection*{Proof of Theorem \ref{t:char-nilpotent-finite-lca-one}}
Let $H \subset G$ be a finitely generated subgroup containing both a memory set for $\Sigma$ and a memory set for $\tau$.
By virtue of Lemma \ref{l:restriction-ls}, we have, on the one hand that $\tau$ is nilpotent if and only if $\tau_H$ is, and, one the other hand that $\Omega(\tau) = \{0\}$ if and only if $\Omega(\tau_H) = \{0\}$. 
Thus, it is not restrictive to suppose that $G = H$ is finitely generated.
\par
Suppose that $\tau$ is nilpotent. Then there exists $n_0 \geq 1$ such that $\tau^{n_0}(\Sigma) = \{0\}$. 
It then follows that $\Omega(\tau) = \{0\}$.
\par
Conversely, suppose (b). Then $\Omega(\tau)$ is of finite type. By the characterization of linear SFT in Theorem \ref{t:LSFT-DCC},
the sequence $(\tau^n(\Sigma))_{n \in \N}$ eventually stabilizes, that is, there exits $n_0 \geq 1$ 
such that $\tau^{n_0}(\Sigma) = \Omega(\tau) = \{0\}$. This shows that $\tau$ is nilpotent.

This completes the proof of Theorem \ref{t:char-nilpotent-finite-lca-one}. \hfill $\Box$

\subsection*{Proof of Theorem \ref{t:char-nilpotent-finite-lca}}
We shall prove the implications 
\[
\mbox{(a) $\iff$ (b)  \ \ \ \ and \ \ \ \ (a) $\implies$ (c) $\implies$ (d) $\implies$ (e) $\implies$ (a).}
\]
The implication (a) $\implies$ (b) is trivial.
\par
Suppose that $\tau$ is pointwise nilpotent, so that for every $x \in \Sigma$, 
there exists an integer $n_x \geq 1$ such that $\tau^n(x) = 0$ for all $n \geq n_x$.
Since $G$ is finitely generated, it is countable.
Then, the configuration space $A^G$, being a countable product of discrete (and therefore completely metrizable) spaces, it admits a complete metric compatible with its topology, and hence is a Baire space. Since $\Sigma$ is closed in $A^G$, it is a Baire space as well. 
For each integer $n \geq 1$, the set
\[ 
X_n \coloneqq (\tau^n)^{-1}(0) = \{x \in \Sigma : \tau^n(x) = 0\}  
\]
is a linear subshift of $A^G$ contained in $\Sigma$. We have $\Sigma = \bigcup_{n \geq 1} X_n$ by our hypothesis on $\tau$.
By the Baire category theorem, there is an integer $n_0 \geq 1$ such that $X_{n_0}$ has a nonempty interior.
Since $\Sigma$ is topologically mixing and $G$ is infinite, $\Sigma$ is topologically transitive.
It follows from a standard fact (cf.\ \cite[Lemma A.3]{ccp-2020}) that $X_{n_0} = \Sigma$, equivalently, $\tau^{n_0}(\Sigma) = \{0\}$. 
The latter is equivalent to $\tau$ being nilpotent, and the implication (b) $\implies$ (a) follows. 
From the first implication we deduce that in fact (a) $\iff$ (b).
\par
The implication (a) $\implies$ (c) is obvious.
\par 
The implication (c) $\implies$ (d) is clear since $\Omega(\tau)$ is a vector subspace of $\tau^{n_0}(\Sigma)$.
\par
The implication (d) $\implies$ (e) follows from Proposition \ref{p:TopTransLinShifts} since any topologically mixing action of an infinite group is topologically transitive.
\par
Finally, suppose (e). As $\Omega(\tau)$ is of finite type, we deduce from Theorem \ref{t:LSFT-DCC}
that the sequence $(\tau^n(\Sigma))_{n \in \N}$ eventually stabilizes, that is, there exists an $n_0 \in \N$ such that 
$\tau^{n_0}(\Sigma) = \Omega(\tau) = \{0\}$. Thus $\tau$ is nilpotent. This shows the outstanding implication 
(e) $\implies$ (a), and the proof  of Theorem \ref{t:char-nilpotent-finite-lca} is complete. \hfill $\Box$

\subsection*{Proof of Corollary \ref{c:char-nilpotent-finite-lca}}
We only need to prove the equivalences for $G$ not finitely generated. Let $H \subset G$ be a finitely generated subgroup containing
both a memory set for $\Sigma$ and a memory set for $\tau$. Observe that $[G:H] = \infty$, since $G$ is not finitely generated.
Denote by  $\tau_H \colon \Sigma_H \to \Sigma_H$ the corresponding restriction cellular automaton.
\par
The implication (a) $\implies$ (b) is trivial.
\par
Suppose (b). It is straightforward that $\tau_H$ is also pointwise nilpotent. It then follows from the finitely generated case (i.e.,
from the implication (b) $\implies$ (a) in Theorem \ref{t:char-nilpotent-finite-lca}) that $\tau_H$ is nilpotent. We then deduce from Lemma \ref{l:restriction-ls}.(iii) that
$\tau$ is itself nilpotent. This shows the implication (b) $\implies$ (a). Combined with the previous implication, this gives the equivalence
(a) $\iff$ (b).
\par
The implications (a) $\implies$ (c) $\implies$ (d) are trivial.
\par
Suppose (d). Recalling that $H$ has infinite index in $G$, we deduce from Lemma \ref{l:restriction-ls}.(i) 
that $\Omega(\tau_H) = \{0\}$ and $\Omega(\tau) = \{0\}$.
This shows the implication (d) $\implies$ (e).
\par
The final implication (e) $\implies$ (a) follows from Theorem \ref{t:char-nilpotent-finite-lca-one}.

The proof of Corollary \ref{c:char-nilpotent-finite-lca} is complete. \hfill $\Box$

\section{Groups of $K$-linear Markov type}
\label{ss:GLMT}
We have seen in Corollary \ref{c:constant-conf-SFT} that the condition that $G$ be finitely generated cannot be removed from the assumptions in Proposition \ref{p:FD-SFT}. 
More generally, if a finitely generated group $G$ admits a subgroup $H$ which is not finitely generated (for instance, if $G$
contains a subgroup $K$ isomorphic to $F_2$, the free group of rank $2$, and $H = [K,K] \subset K$ its commutator subgroup) 
then the subshift consisting of all configurations  $x \in A^G$ which are constant on each left coset of $H$ in $G$ is not 
of finite type (note that $H$ has necessarily infinite index in $G$).
\par
Recall that a group $G$ satisfies the \emph{maximal condition on subgroups} if any ascending sequence $G_0 \subset G_1 \subset \cdots
\subset G_n \subset G_{n+1} \subset \cdots \subset G$ of subgroups eventually stabilizes, that is, there exists $n_0 \geq 1$ such that
$G_n = G_{n_0}$ for all $n \geq n_0$. 
It is immediately verified that a group $G$ satisfies the maximal condition on subgroups if and
only if all of its subgroups are finitely generated.
A group satisfying the maximal condition on subgroups is also called a \emph{Noetherian group}.
From the above discussion we immediately deduce the following.

\begin{corollary}
\label{c:noetherian-group}
Let $G$ be a group of $K$-linear Markov type for some field $K$. Then $G$ is Noetherian. 
In particular, $G$ is finitely generated.
\end{corollary}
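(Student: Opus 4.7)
The plan is to show the contrapositive: if $G$ fails to be Noetherian, i.e., has a subgroup which is not finitely generated, then $G$ is not of $K$-linear Markov type for any field $K$. Combined with the already-noted fact that every Noetherian group is finitely generated (take $H = G$), this is exactly what is wanted. The paragraph preceding the corollary has essentially announced the witness subshift; what remains is to fill in the verification.

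First I would handle the easy half. If $G$ is of $K$-linear Markov type, then in particular the finite-dimensional linear subshift of constant configurations in $K^G$ must be of finite type, so Corollary \ref{c:constant-conf-SFT} forces $G$ to be finitely generated. This takes care of the ``in particular'' clause and will be recycled: any subgroup $H$ of a $K$-linear Markov type group must itself be finitely generated, which would yield the whole Noetherian conclusion.

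The core step is to exhibit, given any non–finitely generated subgroup $H \leq G$, a linear subshift $\Sigma_H \subset K^G$ which is not of finite type. The natural candidate is
\[
\Sigma_H \coloneqq \{x \in K^G : x(gh) = x(g) \text{ for all } g \in G,\, h \in H\},
\]
i.e., configurations constant on left cosets of $H$. One checks directly that $\Sigma_H$ is a $K$-linear, closed, $G$-shift-invariant subspace of $K^G$, so it is a linear subshift. Suppose toward contradiction that $\Sigma_H = \Sigma(D,P)$ with $D \subset G$ finite; by Lemma \ref{l:W-subspace} we may take $P$ to be the linear subspace $(\Sigma_H)_D \subset K^D$, namely the patterns $p \colon D \to K$ with $p(d_1)=p(d_2)$ whenever $d_1^{-1}d_2 \in H$.

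Here comes the key combinatorial observation, which I expect to be the main (though modest) technical hurdle: set $H' \coloneqq \langle H \cap D^{-1}D\rangle$. Since $D$ is finite and $H$ is not finitely generated, $H'$ is a proper subgroup of $H$. I claim every configuration constant on left cosets of $H'$ lies in $\Sigma(D,P) = \Sigma_H$. Indeed, for such an $x$ and any $g \in G$, $d_1,d_2 \in D$ with $d_1^{-1}d_2 \in H$, one has $d_1^{-1}d_2 \in H \cap D^{-1}D \subset H'$, so $gd_1$ and $gd_2$ lie in the same $H'$-coset and hence $x(gd_1)=x(gd_2)$, verifying $(g^{-1}x)\vert_D \in P$. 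Picking any $h \in H \setminus H'$ and any $a \in K \setminus \{0\}$, one easily constructs (e.g.\ using the left-coset partition of $G$ by $H'$) an $x$ constant on $H'$-cosets but with $x(1_G) \neq x(h)$; this $x$ lies in $\Sigma(D,P) \setminus \Sigma_H$, the desired contradiction. Hence $\Sigma_H$ is not an SFT, $G$ is not of $K$-linear Markov type, and the corollary follows. The finite-generation conclusion is then restated as the ``in particular'' clause.
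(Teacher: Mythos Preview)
Your argument is correct and matches the paper's approach: the paper deduces the corollary immediately from the preceding paragraph, which names exactly the same witness subshift (configurations constant on left cosets of a non--finitely generated subgroup $H$), and your proposal simply fills in the verification that this subshift is not an SFT via the subgroup $H' = \langle H \cap D^{-1}D\rangle$. One minor expository note: your remark about ``recycling'' the finite-generation result for subgroups suggests using that subgroups of $K$-linear Markov type groups are themselves of that type, which the paper proves only \emph{after} this corollary; but since you immediately give the direct $\Sigma_H$ argument anyway, this is not a logical gap.
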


As remarked above, we don't know whether or not the class of polycyclic-by-finite groups coincides with the class of groups of $K$-linear Markov type.
We remark that there exist Noetherian groups constructed by A.Y.\ Olshanskii \cite{olshanskii}, 
for which the group algebra is not known to be one-sided Noetherian, equivalently (cf.\ Theorem\ref{t:noether}), 
it is not known whether or not they are of $K$-linear Markov type.

On the other hand, it follows from the work of L.\ Bartholdi \cite{bartholdi} and P.\ Kropholler and K.\ Lorensen \cite{KL}, 
and Theorem \ref{t:noether}, that if $G$ is of $K$-linear Markov type, then $G$ is necessarily amenable. 

We refer to Mathoverflow \cite{MO} for other interesting information.

In the next two lemmas we show that the class of groups of $K$-linear Markov type is closed under finite and cyclic extensions.

\begin{lemma}
\label{l:finite}
Let $G$ be a countable group and let $H \subset G$ be a normal subgroup of finite index. 
Suppose that $H$ is of linear Markov type. Then also $G$ is of linear Markov type.
\end{lemma}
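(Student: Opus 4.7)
The plan is to give a self-contained symbolic-dynamics proof (the paper advertises Lemmas \ref{l:finite} and \ref{l:cyclic} as yielding an alternative proof of Corollary~\ref{c:LMT} that avoids Theorem~\ref{t:noether}), based on factoring $A^G$ as $B^H$ for a finite-dimensional alphabet $B$ and applying the hypothesis on $H$.

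First, I would fix a right transversal $T = \{t_1, \ldots, t_n\}$ of $H$ in $G$ with $1_G \in T$, so $G = \bigsqcup_{i=1}^n H t_i$, and set $B \coloneqq A^T$. Since $A$ is finite-dimensional and $|T| = [G:H] < \infty$, the space $B$ is finite-dimensional over $K$. The bijection $H \times T \to G$, $(h,t) \mapsto h t$, induces a $K$-linear uniform homeomorphism $\phi \colon A^G \to B^H$ given by $\phi(x)(h)(t) \coloneqq x(h t)$. A short computation shows $\phi$ is $H$-equivariant for the respective shift actions (normality of $H$ is not needed here, and in fact is not used anywhere in the argument). In particular, $\phi$ sends linear subshifts of $A^G$ to linear subshifts of $B^H$.

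Second, I would take an arbitrary linear subshift $\Sigma \subset A^G$ and apply the hypothesis to $\phi(\Sigma) \subset B^H$: since $H$ is of $K$-linear Markov type and $B$ is finite-dimensional, $\phi(\Sigma)$ is of finite type. By Lemmas \ref{l:base-change-sft} and \ref{l:W-subspace} there exists a finite subset $D_H \subset H$ with
\[
\phi(\Sigma) = \Sigma(B^H;\, D_H,\, \phi(\Sigma)_{D_H}).
\]
Set $D \coloneqq D_H \cdot T \subset G$, which is finite. I then claim $\Sigma = \Sigma(A^G;\, D,\, \Sigma_D)$. The inclusion $\subset$ is immediate. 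For the converse, given $y \in \Sigma(A^G;\, D,\, \Sigma_D)$, the condition $(h_0^{-1} y)\vert_D \in \Sigma_D$ (which holds in particular for each $h_0 \in H$) unwinds, under $\phi$, to $(h_0^{-1} \phi(y))\vert_{D_H} \in \phi(\Sigma)_{D_H}$ for every $h_0 \in H$; by the SFT description of $\phi(\Sigma)$ this forces $\phi(y) \in \phi(\Sigma)$, hence $y \in \Sigma$.

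The main obstacle is purely bookkeeping: setting up $\phi$ carefully, checking $H$-equivariance, and verifying that patterns on $D = D_H \cdot T \subset G$ in the $A^G$ picture correspond bijectively to patterns on $D_H \subset H$ in the $B^H$ picture, so that the SFT condition transfers cleanly in both directions. Once this dictionary is in place, the only relevant shifts for membership in $\phi(\Sigma)$ are the $H$-shifts, which are exactly the ones available inside the $G$-shift condition defining $\Sigma(A^G;\, D,\, \Sigma_D)$. (A more conceptual one-line proof is also available once Theorem~\ref{t:noether} is invoked: $K[G]$ is free of rank $[G:H]$ as a left $K[H]$-module, hence left-Noetherian whenever $K[H]$ is, and Theorem~\ref{t:noether} converts this back to the linear-Markov property; but this would spoil the self-contained route to Corollary~\ref{c:LMT}.)
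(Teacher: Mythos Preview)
Your proposal is correct and follows essentially the same route as the paper: both factor $A^G$ as $B^H$ with $B = A^T$, push $\Sigma$ to an SFT $\phi(\Sigma) = \Sigma(B^H; D_H, \phi(\Sigma)_{D_H})$, and pull the window back to $D = D_H T$ to exhibit $\Sigma = \Sigma(A^G; D, \Sigma_D)$. Your observation that normality of $H$ is never used is accurate---the paper's argument does not invoke it either---and your use of only the $H$-shifts in the reverse inclusion is a mild streamlining of the paper's chain of equivalences, which runs over all of $G = HT$.
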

\begin{proof}
Let $A$ be a finite-dimensional vector space and let $\Sigma \subset A^G$ be a linear subshift.
Let $T \subset G$ be a complete set of representatives for the cosets of $H$ in $G$, so that $G = HT$.
Then $B \coloneqq A^T$ is a finite-dimensional vector space and the map $\varphi \colon A^G \to B^H$ defined by
\begin{equation}
\label{e:def-phi}
\left(\varphi(x)(h)\right)(t) = x(ht)
\end{equation}
for all $x \in A^G$, $h \in H$, and $t \in T$, is a linear isomorphims and uniform homeomorphism. Moreover, 
\[
\left(\varphi(kx)(h)\right)(t) = (kx)(ht) = x(k^{-1}ht) = \left(\varphi(x)(k^{-1}h)\right)(t) = \left(k\varphi(x)(h)\right)(t)
\]
for all $x \in A^G$, $k,h \in H$, and $t \in T$, showing that $\varphi$ is $H$-equivariant.
\par
Then $\Sigma' \coloneqq \varphi(\Sigma)$ is a linear subshift in $B^H$. 
Since $H$ is of linear Markov type, and $B$ is finite-dimensional, there exists a finite subset $D_H \subset H$ 
and a subspace $P_H \subset B^{D_H}$ such that $\Sigma' = \Sigma(B^H;D_H,P_H)$. 
Set $D \coloneqq D_HT \subset G$ and consider the map $\psi \colon A^D \to B^{D_H}$
defined by
\begin{equation}
\label{e:def-psi}
\left(\psi(y)(h)\right)(t) = y(ht)
\end{equation}
for all $y \in A^D$, $h \in D_H$, and $t \in T$. Then $\psi$ is a linear isomorphism and a uniform homeomorphism. 
Let us set $P \coloneqq \psi^{-1}(P_H) \subset A^D$.
Note that if $x \in A^G$, $h \in D_H$, and $t \in T$ we have
\[
\left(\varphi(x)\vert_{D_H}(h)\right)(t) = \left(\varphi(x)(h)\right)(t) = x(ht) = x\vert_D(ht) = \left(\psi(x\vert_D)(h)\right)(t)
\]
so that
\begin{equation}
\label{e:phi-psi}
\varphi(x)\vert_{D_H} = \psi(x\vert_D).
\end{equation}
It follows that
\begin{align*}
x \in \Sigma & \iff tx \in \Sigma, \mbox{ for all } t \in T && \mbox{(by $G$-invariance of $\Sigma$)}\\
& \iff \varphi(tx) \in \Sigma', \mbox{ for all } t \in T && \mbox{(by definition of $\Sigma'$ and $\varphi$ being $1$-$1$)}\\
& \iff \left(h \varphi(tx)\right)\vert_{D_H} \in P_H, \mbox{ for all } h \in H \mbox{ and } t \in T && \mbox{(since $\Sigma' = \Sigma(B^H;D_H,P_H)$)}\\
& \iff \left(\varphi(htx)\right)\vert_{D_H} \in P_H, \mbox{ for all } h \in H \mbox{ and } t \in T && 
\mbox{(by $H$-equivariance of $\varphi$)}\\
& \iff \left(\varphi(gx)\right)\vert_{D_H} \in P_H, \mbox{ for all } g \in G && \mbox{(since $G = HT$)}\\
& \iff \psi^{-1}\left(\left(\varphi(gx)\right)\vert_{D_H}\right) \in P, \mbox{ for all } g \in G && \mbox{(by definition of $P$ and $\psi$ being $1$-$1$)}\\
& \iff (gx)\vert_D \in P, \mbox{ for all } g \in G && \mbox{(by \eqref{e:phi-psi})}\\
& \iff x \in \Sigma(A^G; D, P). &&
\end{align*}
This shows that $\Sigma = \Sigma(A^G; D, P)$ is of finite type.
We deduce that $G$ is of linear Markov type.
\end{proof}

The following is the linear (and therefore simpler) version of the more general result \cite[Theorem 7.2]{phung-2020}. 

\begin{lemma}
\label{l:cyclic}
Let $G$ be a countable group and let $H \subset G$ be a normal subgroup such that $G/H$ is infinite cyclic. 
Suppose that $H$ is of linear Markov type. Then also $G$ is of linear Markov type.
\end{lemma}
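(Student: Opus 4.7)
My plan is to adapt the strategy of Lemma \ref{l:finite} to the present setting. The essential new difficulty is that a transversal for $H$ in $G$ must now be infinite (indexed by $\Z$), so the naive analog of the combined alphabet $A^T$ used in Lemma \ref{l:finite} is infinite-dimensional and the hypothesis on $H$ cannot be applied to it directly. To circumvent this I will approximate $\Sigma$ from above by a decreasing sequence of $G$-SFTs obtained by applying the $H$-Markov property to successively larger finite-window projections.

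First I would fix $t \in G$ lifting a generator of $G/H \cong \Z$, so that $G = \bigsqcup_{k \in \Z} Ht^k$, and denote by $\alpha \in \Aut(H)$ the automorphism $h \mapsto tht^{-1}$. For each $r \in \N$ I would introduce the finite set $E_r \coloneqq \{t^k : |k| \leq r\}$ and the finite-dimensional $K$-vector space $B_r \coloneqq A^{E_r}$, together with the ``folding'' map $\phi_r \colon A^G \to B_r^H$ defined by $\phi_r(x)(h)(t^k) \coloneqq x(ht^k)$. This map is $K$-linear, $H$-equivariant, continuous, and surjective.

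Given a linear subshift $\Sigma \subset A^G$, I would set $\Lambda_r \coloneqq \overline{\phi_r(\Sigma)} \subset B_r^H$, a linear $H$-subshift. Since $B_r$ is finite-dimensional and $H$ is of $K$-linear Markov type, $\Lambda_r$ is of $H$-finite type: $\Lambda_r = \Sigma(B_r^H; D_r, W_r)$ for some finite $D_r \subset H$ and subspace $W_r \subset B_r^{D_r}$. Pulling back to $A^G$ and imposing the local condition at all $G$-translates (rather than only at $H$-translates), I would obtain a linear $G$-SFT
\[
\Xi_r \coloneqq \Sigma(A^G; D_r E_r, \tilde W_r),
\]
where $\tilde W_r \subset A^{D_r E_r}$ is the transcription of $W_r$ under $B_r^{D_r} \cong A^{D_r E_r}$. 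Routine verifications (using $G$-invariance and closedness of $\Sigma$, together with the fact that the coordinate projection $B_{r+1}^H \to B_r^H$ sends $\Lambda_{r+1}$ into $\Lambda_r$) then give $\Sigma \subset \Xi_{r+1} \subset \Xi_r$ for every $r$ and $\bigcap_r \Xi_r = \Sigma$; the latter uses that any finite subset of $G$ is contained in $F \cdot E_r$ for some finite $F \subset H$ and $r$ large enough, combined with the density of $\phi_r(\Sigma)$ in $\Lambda_r$.

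By Theorem \ref{t:LSFT-DCC}, it will then suffice to prove that the decreasing chain $(\Xi_r)_r$ eventually stabilizes. This is the hard part and must exploit the $t$-invariance of $\Sigma$ in an essential way. Conceptually, this step is the symbolic-dynamic counterpart of the classical Hilbert basis theorem for skew Laurent polynomial rings; indeed, via Theorem \ref{t:noether}, the lemma is equivalent to the ring-theoretic statement that $K[H]$ left Noetherian implies $K[G] \cong K[H][t, t^{-1}; \alpha]$ left Noetherian, where $\alpha$ is the $K$-algebra automorphism of $K[H]$ extending conjugation by $t$ in $G$. Concretely within the symbolic setup, the $t$-action on $A^G$ provides, beyond the coordinate projection $P_r \colon B_{r+1}^H \to B_r^H$, a second natural map $T_r \colon B_{r+1}^H \to B_r^H$ induced by the $t$-shift, and the coincidence $P_r = T_r$ on the dense subset $\phi_{r+1}(\Sigma) \subset \Lambda_{r+1}$ supplies the rigidity needed to bound the $H$-Markov complexity of $\Lambda_r$ uniformly in $r$, forcing $(\Xi_r)$ to stabilize and concluding that $\Sigma$ is of finite type.
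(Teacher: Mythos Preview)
Your framework is sound up through the construction of the decreasing chain $(\Xi_r)_{r \in \N}$ of $G$-SFTs with $\bigcap_r \Xi_r = \Sigma$: the folding maps $\phi_r$, the $H$-subshifts $\Lambda_r$, and the inclusions $\Sigma \subset \Xi_{r+1} \subset \Xi_r$ all check out. The gap is in the last paragraph, where the entire content of the lemma is concentrated. Showing that $(\Xi_r)$ stabilizes is, by Theorem~\ref{t:LSFT-DCC}, exactly equivalent to showing that $\Sigma$ is an SFT, so you have not yet made any real progress. Your proposed mechanism is not correct as stated: the maps $P_r$ and $T_r$ satisfy $P_r \circ \phi_{r+1} = \phi_r$ and $T_r \circ \phi_{r+1} = \phi_r \circ (x \mapsto tx)$, so $P_r = T_r$ on $\phi_{r+1}(\Sigma)$ would force $x\vert_{HE_r} = (tx)\vert_{HE_r}$ for every $x \in \Sigma$, which fails already for $\Sigma = A^G$. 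What $t$-invariance actually gives you is that \emph{both} $P_r$ and $T_r$ send $\Lambda_{r+1}$ into $\Lambda_r$, but this alone does not bound the ``$H$-Markov complexity'' of the $\Lambda_r$, and you have not indicated how to extract a stabilization argument from it.

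The paper's proof avoids this circularity by a different route. Rather than approximating $\Sigma$ from above and hoping the approximations stabilize, it isolates a single decreasing chain of $H$-subshifts $X_n \subset A^H$ (restrictions to $H$ of configurations in $\Sigma$ vanishing on the one-sided strip $HT_{-n}^{-1}$) whose stabilization at some $n_0$ is guaranteed by the Markov hypothesis on $H$ and then has direct geometric meaning: it pins down the correct transversal window $T_{-n_0}^{n_0}$ for which $\Sigma$ is already determined by $\Sigma_{HT_{-n_0}^{n_0}}$. This ``half-line vanishing'' trick is the genuine idea that your outline is missing; it is what plays the role, in the symbolic-dynamics proof, of the leading-coefficient argument in the Hilbert basis theorem you invoke.
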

\begin{proof}
Let $A$ be a finite-dimensional vector space and let $\Sigma \subset A^G$ be a linear subshift.
Let $a \in G$ such that $aH$ generates $G/H \cong \Z$ and set $T' \coloneqq \{a^n: n \in \Z\}$. Then
$T'$ is a complete set of representatives for the cosets of $H$ in $G$ so that $G = HT'$. 
Since $H$ is also countable, we can find an increasing sequence $(F_m)_{m \in \N}$ of finite subsets $F_m \subset H$
such that $1_H \in F_0$ and $H = \bigcup_{m \in \N} F_m$. 
For $i,j \in \Z \cup \{-\infty, +\infty\}$ and $i \leq j$ let us set $T_i^j \coloneqq \{a^i, a^{i+1}, \cdots, a^j\} \subset T'$.\\

\noindent
{\bf Claim 1.} {\it For every $n \geq 1$ the set}
\begin{equation}
\label{e:X-n}
X_n \coloneqq \{x\vert_H: x \in \Sigma \mbox{ such that } x(g) = 0_A \mbox{ for all } g \in HT_{-n}^{-1}\} \subset A^H
\end{equation}
{\it is a linear subshift in $A^H$.}\\
\begin{proof}[Proof of the Claim]
Let $n \geq 1$. The fact that $X_n$ is a vector subspace of $A^H$ is clear.
Let now $x \in X_n$. 
Then, there exists $y \in \Sigma$ such that $x = y\vert_H$ and $y(g) = 0_A$ for all $g \in HT_{-n}^{-1}$.
Given $h \in H$, we have $hy \in \Sigma$, because $\Sigma$ is a subshift in $A^G$, and
$(hy)(g) = y(h^{-1}g) = 0_A$ for all $g \in HT_{-n}^{-1}$, since $h^{-1}g \in hHT_{-n}^{-1} = HT_{-n}^{-1}$.
It follows that $hx = (hy)\vert_H \in X_n$, and this shows that $X_n$ is $H$-invariant.
We are only left to show that $X_n$ is closed with respect to the prodiscrete topology in $A^H$.
For $k \geq n$ let us set
\[
X_{n,k} \coloneqq \{x\vert_{F_kT_{-k}^k}: x \in \Sigma \mbox{ such that } x(g) = 0_A \mbox{ for all } g \in HT_{-n}^{-1}\} \subset \Sigma_{F_kT_{-k}^k}.
\]
Note that $X_{n,k}$ is a finite-dimensional vector space. 
For $m \geq k \geq n$, let $\pi_{k,m} \colon A^{F_mT_{-m}^m} \to A^{F_kT_{-k}^k}$ denote the projection map.
Note that if $x \in A^{F_mT_{-m}^m}$ satisfies that $x(g) = 0_A$ for all $g \in F_mT_{-n}^{-1}$, 
then $\pi_{k,m}(x)(g') = 0_A$ for all $g' \in F_kT_{-n}^{-1}$. Hence, setting $p_{k,m} \coloneqq \pi_{k,m}\vert_{X_{n,m}}$ we have
$p_{k,m} \colon X_{n,m} \to X_{n,k}$, and $(X_{n,k}, p_{k,m})_{m \geq k \geq n}$ is an inverse system of finite-dimensional vector spaces.
\par
Let $z \in A^H$ be a configuration belonging to the closure of $X_n$ in $A^H$. We must show that $z \in X_n$.
By definition, for each $k \geq n$ there exists a configuration $x_k \in \Sigma$ such that
\[
x_k\vert_{F_k} = z\vert_{F_k} \mbox{ \ and \ } x_k(g) = 0_A \mbox{ for all } g \in HT_{-n}^{-1}.
\]
Let us set
\[
X_{n,k}(z) \coloneqq \{x\vert_{F_kT_{-k}^k}: x \in \Sigma \mbox{ such that } x\vert_{F_k} = z\vert_{F_k} \mbox{ and } x(g) = 0_A \mbox{ for all } g \in HT_{-n}^{-1}\} \subset X_{n,k}.
\]
Note that $X_{n,k}(z)$ is an affine subset in $A^{F_kT_{-k}^{k}}$. Moreover, for $i \leq j$ we have that $p_{ij}(X_{n,j}(z)) \subset
X_{n,i}(z)$, showing that $(X_{n,k}(z))$ is an inverse system (in fact, an inverse subsystem of $(\Sigma_{F_kT_{-k}^{k}})$).
By Lemma \ref{l:inverse-limit-closed-im}, there exists 
$x \in \varprojlim_{k \geq n} X_{n,k}(z) \subset \varprojlim_{k \geq n} \Sigma_{F_kT_{-k}^{k}} = \Sigma$. 
By construction, we have
\[
x(g) = 0_A \mbox{ for all } g \in F_kT_{-n}^{-1} \mbox{ \ and \ } x\vert_{F_k} = z\vert_{F_k}, \mbox{ for all } k \geq n,
\]
so that, letting $k \to \infty$,
\[
x(g) = 0_A \mbox{ for all } g \in HT_{-n}^{-1} \mbox{ \ and \ } x\vert_H = z.
\]
This shows that $z = x\vert_H \in X_n$. The claim follows.
\end{proof}

It is clear that $X_n \supset X_{n+1}$ for all $n \geq 1$. Thus, as a consequence of Claim 1, 
$(X_n)_{n \in \N}$ is a decreasing sequence of linear subshifts of $A^H$. 
Since $A$ is finite-dimensional and $H$ is of linear Markov type, by Corollary \ref{c:DCC-FT}
the above sequence must stabilize: there exists $n_0 \in \N$ such that $X_n = X_{n_0}$ for all $n \geq n_0$.
Thus, setting 
\[
X \coloneqq \bigcap_{n \in \N} X_n = \{x\vert_H: x \in \Sigma \mbox{ such that } x(g) = 0_A \mbox{ for all } g \in HT_{-\infty}^{-1}\} \subset A^H
\]
we have that $X$ is a linear subshift in $A^H$ and, moreover,
\begin{equation}
\label{e:X-X-n-0}
X = X_{n_0} = \{x\vert_H: x \in \Sigma \mbox{ such that } x(g) = 0_A \mbox{ for all } g \in HT_{-n_0}^{-1}\}.
\end{equation}

Consider the finite set $T \coloneqq T_{-n_0}^{n_0} \subset T'$ and set $\Omega \coloneqq HT \subset G$. 
The action of $H$ on $\Omega$ by left multiplication induces an action of $H$ on $A^\Omega$: this is given by
setting $(hx)(kt) \coloneqq x(h^{-1}kt)$ for all $h,k \in H$, $x \in A^\Omega$, and $t \in T$.\\

\noindent
{\bf Claim 2.} {\it The subset $\Sigma_\Omega \subset A^\Omega$ is $H$-invariant and closed with respect to the 
prodiscrete topology on $A^\Omega$.}\\

\begin{proof}[Proof of the Claim]
Let $z \in \Sigma_\Omega$. Then there exists $x \in \Sigma$ such that $z = x\vert_\Omega$. Given $h \in H$, 
we have, for all $k \in H$ and $t \in T$,
\[
(hz)(kt) = z(h^{-1}kt) = x(h^{-1}kt) = (hx)(kt) =  (hx)\vert_\Omega (kt).
\]
Since $hx \in \Sigma$, we deduce that $hz = (hx)\vert_\Omega \in \Sigma_\Omega$. This shows that $\Sigma_\Omega$ is $H$-invariant.
\par
Since $G$ is countable, we can find an increasing sequence $(E_n)_{n \in \N}$ of finite subsets of $G$ 
such that $G = \bigcup_{n \in \N} E_n$. Setting $F_n \coloneqq E_n \cap \Omega$ for all $n \in \N$, 
we obtain an increasing sequence  $(F_n)_{n \in \N}$ of finite subsets of $\Omega$  such that $\Omega = \bigcup_{n \in \N} F_n$.
Let $d \in A^\Omega$ and suppose it belongs to the closure of $\Sigma_\Omega$ in $A^\Omega$. We must show that $d \in \Sigma_\Omega$.
For each $n \in \N$ there exists $y \in \Sigma_\Omega$ such that $d\vert_{F_n} = y\vert_{F_n}$. Since $y \in \Sigma_\Omega$, there
exists $x \in \Sigma$ such that $y = x\vert_\Omega$. Setting $z \coloneqq x\vert_{E_n} \in \Sigma_{E_n}$, 
we have $z\vert_{F_n} = x\vert_{F_n} = (x\vert_\Omega)\vert_{F_n} = y\vert_{F_n} = d\vert_{F_n}$, so that
the finite-dimensional affine set
\[
Z_n \coloneqq \{z \in \Sigma_{E_n}: z\vert_{F_n} = d\vert_{F_n}\} \subset \Sigma_{E_n}
\]
is nonempty.
It is clear that for $m,n \in \N$ with $m \geq n$ the restriction map $\pi_{n,m} \colon A^{E_m} \to A^{E_n}$ induces, by
restriction, a well defined linear map $p_{n m} \colon Z_m \to Z_n$. Hence, by applying Lemma \ref{l:inverse-limit-closed-im} 
to the inverse system $(Z_m, p_{n m})$,  there exists 
$x \in \varprojlim_{n \to \infty} Z_n \subset \varprojlim_{n \to \infty} \Sigma_{E_n} = \Sigma$.
By definition, we have $x\vert_{F_n} = d\vert_{F_n}$ for every $n \in \N$, so that $x\vert_\Omega = d$.
This shows that $d = x\vert_\Omega \in \Sigma_\Omega$. We deduce that $\Sigma_\Omega$ is closed, and the claim follows.
\end{proof}

\noindent
{\bf Claim 3.} $\Sigma = \Sigma(A^G;\Omega, \Sigma_\Omega)$.
\begin{proof}[Proof of the Claim]
Let us set $\widetilde{\Sigma} \coloneqq \Sigma(A^G;\Omega, \Sigma_\Omega) \subset A^G$. 
It is clear that $\Sigma \subset \widetilde{\Sigma}$. 
To prove the converse inclusion, let $y \in \widetilde{\Sigma}$. 
Then, there exists $z_0 \in \Sigma$ such that $z_0\vert_\Omega = y\vert_\Omega$.
Since also $a^{-1}y \in \widetilde{\Sigma}$, there exists $y_0 \in \Sigma$ such that
$y_0\vert_\Omega = (a^{-1}y)\vert_\Omega$. As a consequence, setting $z_1 \coloneqq ay_0 \in \Sigma$, one has
$z_1(a \omega) = (a^{-1}z_1)(\omega) = y_0(\omega) =  (a^{-1}y)(\omega) = y(a\omega)$ for all $\omega \in \Omega$,
equivalently, $z_1\vert_{a\Omega} = y\vert_{a\Omega}$. Note that $a\Omega = aHT = HaT = HaT_{-n_0}^{n_0} = HT_{-n_0+1}^{n_0+1}$
so that $\Omega \cap (a\Omega) = HT_{-n_0+1}^{n_0} \supset HT_{1}^{n_0}$. 
Thus, for the configuration $z \coloneqq z_0 - z_1 \in \Sigma$  we have $z(g) = 0_A$ for all $g \in HT_{1}^{n_0}$.
Moreover, if $g' \in HT_{-n_0}^{-1}$, then $g \coloneqq a^{n_0+1}g' \in HT_{1}^{n_0}$ and therefore
$(a^{-n_0-1}z)(g') = z(a^{n_0+1}g') = z(g) = 0_A$. As a consequence, 
the configuration $v \coloneqq (a^{-n_0-1}z)\vert_H \in A^H$ is in $X$ (cf.\ \eqref{e:X-X-n-0}).
Set 
\[
L(v) \coloneqq \{x \in \Sigma: x(g) = 0_A \mbox{ for all } g \in HT_{-\infty}^{-1} \mbox{ and } x\vert_H = v\} \subset A^G.
\]
Clearly, $L(v)$ is a nonempty affine subspace: keeping in mind \eqref{e:X-X-n-0}, 
there exists $z' \in \Sigma$ with $z'\vert_H = (a^{-n_0-1}z)\vert_H = v$ and
$z'(g) = 0_A$ for all $g \in HT_{-\infty}^{-1}$, so that $z' \in L(v)$.
Let $c \in L(v)$ and consider the configuration $x \coloneqq z_0 - a^{n_0+1}c \in \Sigma$. 
Let $h \in H$ and set $h' \coloneqq a^{-n_0-1}ha^{n_0+1} \in H$. Then, for $1 \leq n \leq n_0$ we have
\begin{equation}
\label{e:cas-n}
x(ha^n) = z_0(ha^n) - (a^{n_0+1}c)(ha^n) = z_0(ha^n) - c(h'a^{n-n_0-1}) = z_0(ha_n) = y(ha^n),
\end{equation}
where we used the fact that, on the one hand $-n_0 \leq n-n_0-1 \leq -1$ and $c\vert_{HT_{-n_0}^{-1}} = 0$, and,
on the other hand, $HT_{-n_0}^{-1} \subset HT_{-n_0}^{n_0} = \Omega$ and $z_0\vert_{\Omega} = y\vert_\Omega$.
Moreover,
\begin{align*}
x(ha^{n_0+1}) & = x(a^{n_0+1}h') = z_0(a^{n_0+1}h') - (a^{n_0+1}c)(a^{n_0+1}h') && \\
& = z_0(a^{n_0+1}h') - c(h') = z_0(a^{n_0+1}h') - (a^{-n_0-1}z)(h') && \mbox{(since $c \in L(v)$)}\\
& = z_0(a^{n_0+1}h') - z(a^{n_0+1}h') = z_1(a^{n_0+1}h')  && \mbox{(since $z =  z_0 - z_1$)}\\
& = z_1(a(a^{n_0}h')) = y(a(a^{n_0}h')) && \mbox{(since $z_1\vert_{a\Omega} = y\vert_{a\Omega}$)}\\
& = y(a^{n_0+1}h') = y(h a^{n+1}). &&
\end{align*}
Keeping in mind \eqref{e:cas-n}, this shows that $x\vert_{HT_1^{n_0+1}} = y\vert_{HT_1^{n_0+1}}$.
\par
An immediate induction on $m \geq 1$ yields a sequence $(x_m)_{m \geq 1}$ in $\Sigma$ such that 
\begin{equation}
\label{e:x-m-y}
x_m\vert_{HT_1^m} = y\vert_{HT_1^m}
\end{equation}
for all $m \geq 1$.
\par
Let now $F \subset G$ be a finite subset. Then we can find $i,j \in \Z$, with $i \leq j$, such that $F \subset HT_i^j$.
Setting $m \coloneqq j-i+1$, it follows that $a^{-i+1}F \subset HT_1^m$. 
Consider the configuration $y' \coloneqq a^{-i+1}y \in \Sigma'$. Then by using \eqref{e:x-m-y} applied to $y'$, we can find
$x_m' \in \Sigma$ such that $x_m'\vert_{HT_1^m} = y'\vert_{HT_1^m}$.
Then setting $x_m \coloneqq a^{i-1}x_m' \in \Sigma$, we obtain $x_m\vert_{HT_i^j} = y\vert_{HT_i^j}$ so that, in particular,
$x_m\vert_F = y\vert_F$. Since $\Sigma$ is closed and $F$ was arbitrary, this shows that $y \in \Sigma$.
This proves $\widetilde{\Sigma} \subset \Sigma$, and the claim follows.
\end{proof}

The remaining of the proof of the lemma follows step by step the end of the proof of Lemma \ref{l:finite}, with $G$ replaced by $\Omega$
and $\Sigma'$ replaced by $\varphi(\Sigma_\Omega)$.
We thus set $B \coloneqq A^T$, so that $B$ is a finite-dimensional vector space and the map 
$\varphi \colon A^\Omega \to B^H$ defined by \eqref{e:def-phi} is an $H$-equivariant linear isomorphism and uniform homeomorphism.
By virtue of Claim 2, we have that $\Sigma' \coloneqq \varphi(\Sigma_\Omega) \subset B^H$ is a subshift.
Since $H$ is of linear Markov type, and $B$ is finite-dimensional, there exists a finite subset $D_H \subset H$ 
and a subspace $P_H \subset B^{D_H}$ such that $\Sigma' = \Sigma(B^H;D_H,P_H)$.
Then, setting $D \coloneqq D_HT \subset G$ and $P \coloneqq \psi^{-1}(P_H) \subset A^D$, where
$\psi \colon A^D \to B^{D_H}$ is as in \eqref{e:def-phi}, we have that $\Sigma = \Sigma(A^G; D, P)$ is of finite type.
\end{proof}

\begin{proposition}
Let $K$ be a field. Then the class of $K$-linear Markov groups is closed under the operations of taking subgroups, quotients, and extensions by finite or cyclic groups.
\end{proposition}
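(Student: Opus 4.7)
The plan is to treat each of the four closure operations separately and, where possible, to rely on Theorem~\ref{t:noether} to transfer the question to the ring-theoretic language of one-sided Noetherian group algebras. Throughout, Corollary~\ref{c:noetherian-group} ensures that any group of $K$-linear Markov type is countable, so Theorem~\ref{t:noether} is applicable to the groups that arise.

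For extensions by a finite or cyclic group, nothing new is needed. Given a short exact sequence $1 \to H \to G \to Q \to 1$ with $H$ of $K$-linear Markov type, the group $H$ is normal in $G$, and $G$ is countable since both $H$ and $Q$ are. Lemma~\ref{l:finite} then handles the case when $Q$ is finite, Lemma~\ref{l:cyclic} handles the case $Q \cong \Z$, and the finite cyclic case is subsumed by the first.

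For quotients, given $N \unlhd G$, the natural $K$-linear extension $K[G] \twoheadrightarrow K[G/N]$ of the projection $G \to G/N$ is a surjective $K$-algebra homomorphism, so $K[G/N]$ is a ring-theoretic quotient of $K[G]$. Since one-sided Noetherianity passes to quotient rings, $K[G/N]$ is one-sided Noetherian, and applying Theorem~\ref{t:noether} once more yields that $G/N$ is of $K$-linear Markov type.

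The only genuinely nontrivial point, and where I expect the main effort to go, is the subgroup case. Given $H \leq G$, the group $H$ is finitely generated (hence countable), since every subgroup of the Noetherian group $G$ is finitely generated. Choosing a transversal for the $H$-cosets in $G$ exhibits $K[G]$ as a free, and therefore faithfully flat, left (and right) $K[H]$-module. A strictly ascending chain of left ideals $I_1 \subsetneq I_2 \subsetneq \cdots$ in $K[H]$ would extend to the chain of left ideals $K[G] I_n$ in $K[G]$, and faithful flatness would force $K[G] \otimes_{K[H]} (I_{n+1}/I_n) \cong K[G]I_{n+1} / K[G]I_n \neq 0$ whenever $I_{n+1}/I_n \neq 0$, producing a strictly ascending chain in $K[G]$ and contradicting its one-sided Noetherianity. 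Hence $K[H]$ is one-sided Noetherian, and Theorem~\ref{t:noether} gives that $H$ is of $K$-linear Markov type.
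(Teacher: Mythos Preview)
Your argument is correct, but for subgroups and quotients it follows a genuinely different route from the paper. The paper never invokes Theorem~\ref{t:noether} here; instead it works entirely inside symbolic dynamics. For a subgroup $H\le G$ it associates to each linear subshift $\Sigma\subset A^H$ the ``induced'' subshift $\Sigma^{(G)}=\{x\in A^G:(gx)\vert_H\in\Sigma\text{ for all }g\in G\}$ and observes that $\Sigma^{(G)}$ is an SFT if and only if $\Sigma$ is; for a quotient $G/N$ it pulls back subshifts along the projection in the analogous way. Your approach instead translates via Theorem~\ref{t:noether} into a statement about group algebras: $K[G/N]$ is a ring quotient of $K[G]$, and for $H\le G$ the freeness (hence faithful flatness) of $K[G]$ over $K[H]$ forces a strict chain of left ideals in $K[H]$ to extend to a strict chain in $K[G]$. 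Both arguments are clean; yours is shorter and more algebraic but leans on the nontrivial equivalence of Theorem~\ref{t:noether}, while the paper's is elementary and keeps the proof self-contained within the shift-space formalism (and incidentally exhibits the explicit induction and pullback constructions, which are of independent use). For extensions by finite or cyclic groups your proof coincides with the paper's, citing Lemmas~\ref{l:finite} and~\ref{l:cyclic}.
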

\begin{proof}
Let $G$ be a group, let $H \subset G$ be a subgroup, and let $A$ be a finite-dimensional vector space over a field $K$.
Given a subshift $\Sigma \subset A^H$ we set
\[
\Sigma^{(G)} \coloneqq \{x \in A^G: (gx)\vert_H \in \Sigma \mbox{ for all } g \in G\} \subset A^G.
\]
Roughly speaking, $\Sigma^{(G)}$ is the set of all configurations in $A^G$ whose restriction to each left coset $c \in G/H$
yields -- modulo the bijection $h \mapsto gh$, induced by an element $g \in c$, which identifies $H$ and $c$ --
and element in $\Sigma$. 
\par
It is easy to see that $\Sigma^{(G)} \subset A^G$ is a linear subshift and that it is of finite type if and only if
$\Sigma$ is (cf.\ \cite[Exercise 1.33]{book}; see also \cite[Lemma 2]{salo}).
We deduce that if $G$ is of linear Markov type, so are all of its subgroups.
\par
Suppose now that $H$ is normal in $G$ and denote by $\pi \colon G \to K \coloneqq G/H$ the canonical quotient homomorphism.
Given a subshift $\Sigma \subset A^K$ we denote by
\[
\Sigma(G) \coloneqq \{x \circ \pi: x \in \Sigma\} \subset A^G.
\]
Roughly speaking, $\Sigma(G)$ is the set of all configurations $x \in A^G$ which are constant on each left coset $c \in G/H$
and such that, if $T \subset G$ is a complete set of representatives of the cosets of $H$ in $G$, then the restriction 
$x\vert_T$ yields -- modulo the bijection $\pi\vert_T \colon T \to K$ -- an element in $\Sigma$.
Assume that $G$ is of linear Markov type.
Once again, it is easy to see that $\Sigma(G) \subset A^G$ is a linear subshift and that it is of finite type if and only if
$\Sigma$ is. We deduce that $K$ is of linear Markov type as well.
\par
The fact that the class of groups of linear Markov type is closed under finite or cyclic extensions follows from Lemma \ref{l:finite} and Lemma \ref{l:cyclic}, respectively.
\end{proof}

It is a well known fact (see, e.g., \cite[Theorem 5.4.12]{robinson})
that a solvable group is polycyclic if and only if it is Noetherian. Similarly, one has that a virtually
solvable group is polycyclic-by-finite if an only if it Noetherian (cf.\ \cite[Lemma 6]{salo}).
From Corollary \ref{c:LMT} and Corollary \ref{c:noetherian-group} we deduce the following (cf.\ \cite[Theorem 5]{salo}):

\begin{corollary}
Let $G$ be a virtually solvable group and let $K$ be a field.
Then the following conditions are equivalent:
\begin{enumerate}[{\rm (a)}]
\item $G$ is of $K$-linear Markov type;
\item $G$ is Noetherian;
\item $G$ is polycyclic-by-finite.
\end{enumerate}
\end{corollary}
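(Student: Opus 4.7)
The plan is to prove the circular chain of implications (c) $\implies$ (a) $\implies$ (b) $\implies$ (c), each of which is already essentially available from results proved earlier in the paper together with one standard structural fact about virtually solvable Noetherian groups.

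First, the implication (c) $\implies$ (a) is immediate from Corollary \ref{c:LMT}, which states that every polycyclic-by-finite group is of $K$-linear Markov type for any field $K$. Note that this implication does not use the assumption that $G$ is virtually solvable.

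Next, the implication (a) $\implies$ (b) is immediate from Corollary \ref{c:noetherian-group}, which asserts that any group of $K$-linear Markov type is Noetherian. Again, this holds without the virtual solvability assumption.

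The only nontrivial implication is (b) $\implies$ (c), and this is where the virtual solvability of $G$ enters crucially. The plan here is to invoke the classical result that a solvable group is polycyclic if and only if it satisfies the maximal condition on subgroups (see \cite[Theorem 5.4.12]{robinson}), together with its standard extension to the virtually solvable case (cf.\ \cite[Lemma 6]{salo}): a virtually solvable group is polycyclic-by-finite if and only if it is Noetherian. Concretely, if $G$ is virtually solvable and Noetherian, one picks a finite-index solvable subgroup $H \subset G$; then $H$ inherits the maximal condition on subgroups from $G$ (since subgroups of $H$ are subgroups of $G$), so by the classical result $H$ is polycyclic, and hence $G$, containing the polycyclic subgroup $H$ of finite index, is polycyclic-by-finite.

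I do not foresee a genuine obstacle here since all three pieces are either already proved in the paper or are standard structural facts about solvable groups; the main point is simply to assemble them in the right order and to note where each of the three hypotheses (virtual solvability, Noetherianity, $K$-linear Markov type) is used.
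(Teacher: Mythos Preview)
Your proof is correct and follows essentially the same approach as the paper: the implications (c) $\implies$ (a) and (a) $\implies$ (b) are drawn from Corollary~\ref{c:LMT} and Corollary~\ref{c:noetherian-group}, respectively, and (b) $\implies$ (c) is the standard structural fact (cited to \cite[Theorem 5.4.12]{robinson} and \cite[Lemma 6]{salo}) that a virtually solvable Noetherian group is polycyclic-by-finite. The paper in fact presents this corollary as an immediate consequence of exactly these three ingredients, without spelling out the argument for (b) $\implies$ (c) as you do.
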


\begin{remark}
\label{r:alternative}
As mentioned above, we can directly deduce Corollary \ref{c:LMT} from Lemma \ref{l:finite} and Lemma \ref{l:cyclic}, thus
without using P.~Hall's theorem. For the sake of completeness, we produce here the alternative proof, by induction.
Thus, suppose that $G$ is a polycyclic-by-finite group. Then $G$ admits a subnormal series
$G = G_n \supset G_{n-1} \supset \cdots \supset G_1 \supset G_0 = \{1_G\}$ whose factors are finite or cyclic groups.
We first observe that if $G$ is a trivial group, then it is of $K$-linear Markov type. 
Indeed, let $A$ be a finite-dimensional vector space over a field $K$. Then, setting $D \coloneqq \{1_G\}$ and identifying
$A$ with $A^D$ and $A^G$, we have that the (identity) map
\[
B \mapsto \Sigma(A^G,D,B)
\]
yields a bijection between subspaces $B \subset A$ and subshifts $\Sigma \subset A^G$.
Since every descending sequence of vector subspaces of a finite-dimensional vector space eventually stabilizes, it follows
from Corollary \ref{c:DCC-FT} that all subshifts $A^G$ are of finite type. This proves the base of induction.
A recursive application of Lemma \ref{l:finite} or Lemma \ref{l:cyclic} then
shows that $G_0, G_1, \ldots, G_{n-1}$, and $G_n = G$ are all of $K$-linear Markov type.
\end{remark}

\section{Examples and final remarks}
\subsection{The descending chain condition}
Let $G$ be a group and let $A$ be an infinite-dimensional vector space over a field $K$.
Then $A$ admits a strictly decreasing sequence $(A_n)_{n \in \N}$ of vector subspaces and
the sequence $(\Sigma_n)_{n \in \N}$, where $\Sigma_n\coloneqq A_n^G \subset A^G$, 
is a strictly decreasing sequence of linear subshifts of $A^G$. 
Thus $A^G$ does not satisfy the descending chain condition for linear subshifts. 

\subsection{The closed image property}
\label{s:CIP-examples}
In \cite[Section 5]{cc-TCS} it is shown that if $A$ is an infinite-dimensional vector space and $G$ is any nonperiodic group,
then there exists a linear cellular automaton $\tau \colon A^G \to A^G$ whose image $\tau(A^G)$ is not closed in $A^G$.
This shows that Theorem \ref{t:closed-image} fails to hold in general if the finite-dimensionality of the alphabet $A$ is dropped.
\par
Explicitly, the linear cellular automaton $\tau \colon A^G \to A^G$ we alluded to above can be defined as follows.
Since $A$ is infinite-dimensional, we can find a sequence $(a_i)_{i \in \N}$ of linearly independent vectors in $A$.
Let $E$ denote the vector subspace spanned by the $a_i$'s and let $F$ be a vector subspace such that $A = E \oplus F$.
Let $\psi \colon A \to A$ denote the linear map defined by setting $\psi(a_i) = a_{i+1}$ for all $i \in \N$ and $\psi\vert_F = 0$.
Since $G$ is nonperiodic, there exists an element $g \in G$ of infinite order.
Then the cellular automaton $\tau \colon A^G \to A^G$ with memory set $M = \{1_G,g\} \subset G$ and local defining map
$\mu \colon A^M \to A$ given by
\[
\mu(y) \coloneqq y(g) - \psi(y(1_G)),
\]
for all $y \in A^M$, satisfies that $\tau(A^G)$ is not closed in $A^G$ 
(cf.\ \cite[Lemma 5.2]{cc-TCS}; see also \cite[Example 8.8.3]{book}).

\subsection{Nilpotency for linear cellular automata}
\label{s:examples-nilp-linear}
Let $G$ be a group and let $A$ be a vector space over a field $K$.
Given a linear map $f \colon A \to A$, we denote by $\tau_f \colon A^G \to A^G$ the LCA
with memory set $M \coloneqq \{1_G\}$ and associated local defining map
$\mu_f \coloneqq f  \colon A = A^M \to A$. In other words, $\tau_f = \prod_{g \in G} f$ so that,
in particular, $\tau_f^n(A^G) = \prod_{g \in G} f^n(A)$ for all $n \in \N$. As a consequence,
\begin{equation}
\label{e:omegaf-omegatau}
\Omega(\tau_f) = \bigcap_{n \in \N} \tau_f^n(A^G) = \bigcap_{n \in \N} \prod_{g \in G} f^n(A) =
\prod_{g \in G} \bigcap_{n \in \N}  f^n(A) = \prod_{g \in G} \Omega(f) = \Omega(f)^G.
\end{equation}
Note that $f$ is nilpotent (resp.\ pointwise nilpotent) if and only if $\tau_f$ is nilpotent (resp.\ pointwise nilpotent).
\par
Suppose that $A$ is infinite-dimensional. 
Let $\{e_n: n \in \N\} \subset A$ be an independent subset and set
$A_1 \coloneqq \spa_K\{e_n:n \in \N\}$ and $A_2 \coloneqq A \ominus A_1$.
\par
\begin{enumerate}[{\rm (1)}]
\item
Consider the linear map $f \colon A \to A$ defined by setting $f(e_n) = e_{n+1}$ for all $n \in \N$
and $f(a) = 0$ for all $a \in A_2$. It is then clear that $\Omega(f) = \{0\}$ so that, by \eqref{e:omegaf-omegatau}, 
$\Omega(\tau_f) = \{0\}$. 
However, $\tau_f$ is not pointwise nilpotent (and therefore not nilpotent either).
\item
Consider the linear map $f \colon A \to A$ defined by setting $f(e_0) = 0$, $f(e_n) = e_{n-1}$ for all $n \geq 1$,
and $f(a) = 0$ for all $a \in A_2$. Then $f$ and therefore $\tau_f$ are surjective so that $\tau_f$ 
is not nilpotent, $\Omega(\tau_f) = A^G$. However, $f$ and therefore $\tau_f$ are pointwise nilpotent.
\item
Consider, for each $n \geq 1$, the set $I_n \coloneqq \{0,1,\dots,n\}$ and the map $g_n \colon I_n \to I_n$ 
given by $g_n(k) \coloneqq k - 1$ if $k \geq 1$ and $g_n(0) = 0$. 
Let $X$ be the set obtained by taking disjoint copies of the sets $I_n$, $n \geq 1$, and identifying all copies of 
$0$ in a single point $y_0$ and all copies of $1$ in a single point $y_1 \not= y_0$. 
Then the maps $g_n$ induce a well defined quotient map $g \colon X \to X$. 
Clearly $\Omega(g) = \{y_0,y_1\}$ and $g(\Omega(g)) = \{y_0\}$.
Since $X$ is countable, we can find a bijection $\varphi \colon \N \to X$ such that $\varphi(0) = y_0$ and $\varphi(1) = y_1$.
Setting $h \colon \varphi^{-1} \circ g \circ \varphi \colon \N \to \N$ we thus have $\Omega(h) = \{0,1\}$ and $h(\Omega(h)) = \{0\}$.
Consider the linear map $f \colon A \to A$ defined by setting $f(e_n) \coloneqq e_{h(n)}$ for all $n \in \N$ 
and $f(a) = 0$ for all $a \in A_2$.
Then $\Omega(f) = \spa_K\{e_0,e_1\} = Ke_0 \oplus Ke_1$ while $f(\Omega(f)) = \spa_K\{e_0\} = Ke_0$. 
As a consequence,  $\tau_f(\Omega(\tau_f)) = (Ke_0)^G \subsetneq (Ke_0 \oplus Ke_1)^G = \Omega(\tau_f)$.
\end{enumerate}

\bibliographystyle{siam}

\end{document}